\newtheorem{condition}[theorem]{Condition}
\newtheorem{remark}[theorem]{Remark}
\newcommand{\Abs}[1]{\left|#1\right|}
\newcommand{\abs}[1]{\lvert#1\rvert}
\newcommand{\innprd}[2]{\left< #1 , #2 \right>}
\def\ge{\geqslant}
\def\le{\leqslant}
\def\norm#1{\left|\!\left| #1 \right|\!\right|}
\def\nnorm#1{|\!| #1 |\!|}
\def\op#1{{\mathcal #1}}
\def\vect#1{{\bf #1}}
\def\mat#1{{\bf #1}}
\def\beq{\begin{eqnarray}}
\def\eeq{\end{eqnarray}}
\def\beqs{\begin{eqnarray*}}
\def\eeqs{\end{eqnarray*}}
\def\eqdef{\stackrel{\mathrm{def}}{=}}
\def\R{\mathbb{R}}
\def\N{\mathbb{N}}
\newtheorem{algorithm}[theorem]{Algorithm}
\title{Optimal-order preconditioners for linear systems arising in the semismooth Newton solution of a class
of control-constrained problems}
\author{Andrei Dr{\u{a}}g{\u{a}}nescu\thanks{Department 
    of Mathematics and Statistics, University of Maryland, Baltimore
    County, 1000~Hilltop Circle, Baltimore, Maryland 21250 ({\tt draga@umbc.edu}).
This material is based upon work supported by the 
    U.S. Department of Energy Office of Science, Office of Advanced Scientific Computing Research, Applied
Mathematics program under Award Number DE-SC0005455,
    and by the National Science Foundation under awards
    DMS-1016177 and DMS-0821311.}
  \and{Jyoti Saraswat\thanks{Department 
    of Mathematics and Physics, Thomas More College, Crestview Hills, Kentucky  41017 ({\tt saraswj@thomasmore.edu}).}}}
\begin{document}

\maketitle

\begin{abstract} 
In this article we present a new multigrid preconditioner for the linear systems arising in the semismooth Newton method
solution of certain control-constrained, quadratic distributed optimal control problems.
Using a piecewise constant discretization of the control space, each semismooth Newton iteration essentially
requires inverting a principal submatrix of the matrix entering the normal equations of the associated unconstrained 
optimal control problem, the rows (and columns) of the submatrix representing the constraints deemed inactive
at the current iteration. Previously developed multigrid preconditioners for the aforementioned submatrices were
based on constructing a sequence of conforming coarser spaces, and proved to be of suboptimal quality for the class of problems considered. 
Instead, the multigrid preconditioner introduced in this work uses non-conforming coarse spaces, and it is shown that, 
under reasonable  geometric assumptions on the constraints that are deemed inactive, the  preconditioner approximates
the inverse of the desired submatrix to optimal order. The preconditioner is tested numerically 
on a classical elliptic-constrained optimal 
control problem and further on a constrained image-deblurring problem.

\end{abstract}

\begin{keywords} multigrid, semismooth Newton methods, optimization with PDE constraints, large-scale optimization, image deblurring
\end{keywords}

\begin{AMS} 65K10, 65M55, 65M32, 90C06 
\end{AMS}

\pagestyle{myheadings}
\thispagestyle{plain}
\markboth{A.~DR{\u A}G{\u A}NESCU AND J.~SARASWAT}{MULTIGRID PRECONDITIONING FOR 
SEMISMOOTH NEWTON METHODS}

\section{Introduction}
\label{sec:intro}
The goal of this work is to construct optimal order multigrid preconditioners for optimal
control problems of the type
\beq
\label{eq:contprob}
\min_{u\in \op{U}} \frac{1}{2}\nnorm{\op{K} u-y_d}^2 +\frac{\beta}{2}\nnorm{u}^2\ ,\ \ a\le u\le b,\ \ a.e.,
\eeq
where $\op{U}\eqdef L^2(\Omega)$ with $\Omega\subset \R^n$ a bounded domain, $y_d\in \op{U}$ is given, and $\op{K}:\op{U}\to\op{V}$ is a linear continuous operator
with $\op{V}\hookrightarrow \op{U}$ being a compactly embedded subspace of~$\op{U}$. The parameter $\beta>0$ is used to 
adjust the size of the regularization term $\nnorm{u}^2$.
Throughout this article 
$\nnorm{\cdot}$ denotes the $\op{U}$-norm or the operator-norm of a
bounded linear operator in $\mathfrak{L}(\op{U})$. The functions  \mbox{$a, b\in \op{U}$} defining the inequality 
constraints in~\eqref{eq:contprob} satisfy $a(x)< b(x)$ for all \mbox{$x\in \Omega$}. These problems arise in the
optimal control of partial differential equations (PDEs), 
case in which $\op{K}$ represents the solution operator of a PDE. For example, the classical PDE-constrained optimization problem
\beq
\label{eq:contprobPDE}
\left\{
\begin{array}{l}\vspace{5pt}
\min\ \ \frac{1}{2}\nnorm{y-y_d}^2 +\frac{\beta}{2}\nnorm{u}^2\\\vspace{5pt}
\mathrm{subject\ to:} \ -\Delta y = u\ \ \mathrm{in}\ \Omega, \ \ y=0\ \ \mathrm{on}\ \partial\Omega,\\
\hspace{2cm} a\le u\le b\ \ a.e.,
\end{array}
\right .
\eeq
reduces to~\eqref{eq:contprob}  when replacing $y=\op{K}u$ in the cost functional of~\eqref{eq:contprobPDE},  where 
$\op{K}=(-\Delta)^{-1}:\op{U}\to \op{V}=H^1_0(\Omega)$.
A related problem, discussed in~\cite{MR2429872}, addresses the question of time-reversal for 
parabolic equations, a problem that is ill-posed. In this example 
we set $\op{K}u = \op{S}(T)u$, where $t\mapsto \op{S}(t)u$ is the time-$t$ solution operator of a linear
parabolic PDE with initial value~$u$, and $T>0$ is a fixed time.  If the solution $u_{\min}$
of the inverse problem needs to satisfy certain inequality constraints, e.g., when $u_{\min}$, perhaps
representing the concentration of a substance, is required to have values in $[0,1]$, then it is essential to
impose these constraints explicitly in the formulation of the optimization problem, as shown in~\eqref{eq:contprob}.
For obvious reasons, in the PDE-constrained optimization literature~\eqref{eq:contprob} 
is referred to as the reduced problem. 
For other applications, such as image deblurring, $\op{K}$ 
can be an explicitly defined integral operator
$$\op{K}u\eqdef\int_{\Omega} k(\cdot,x)u(x)dx\ ,$$
with $k\in \op{U}\otimes \op{U}$; here $u$ is the original image and $y=\op{K}u$ is the blurred image. Thus, by solving~\eqref{eq:contprob}
we seek to reconstruct the image $u$ whose blurred version is a given~$y_d$, subject to additional box constraints. 

We give a few references to works on multigrid methods for solving~\eqref{eq:contprob} with no inequality constraints.
In this case~\eqref{eq:contprob} 
is equivalent to the Tikhonov regularization of the ill-posed problem~$\op{K} u= y_d$, which 
in turn reduces to the linear system
\beq
\label{eq:illposedprobreg}
(\op{K}^*\op{K} +\beta I) u= \op{K}^* y_d ,
\eeq
representing the regularized normal equations of~$\op{K} u= y_d$.
A significant literature~\cite{MR1151773, MR97k:65299, MR2001h:65069,  
MR1986801, MR2429872, MR2421947, Dra:Soa:mgstokes}, to mention just a few references, is devoted to 
multigrid methods for~\eqref{eq:illposedprobreg} or the unregularized ill-posed problem.
Moreover, when $\op{K}$ is the solution operator of a linear PDE, an  alternative strategy is
to solve directly the indefinite systems representing the Karush-Kuhn-Tucker (KKT) optimality conditions instead of the
reduced system, and many works~\cite{MR2160699, MR2491821, MR2831057, MR2863635, MR3069094} are concerned with
multigrid methods for PDE-constrained optimization problems in unreduced form.
A comprehensive discussion of the latter strategy is found in~\cite{MR2505585}.

The presence of bound-constraints in~\eqref{eq:contprob} brings additional challenges to the solution process, 
since the KKT optimality conditions form a complementarity system as opposed to a linear or a smooth nonlinear system.
As shown by Hinterm{\" u}ller et al.~\cite{MR1972219}, the KKT system can be reformulated as a semismooth nonlinear system
for which a superlinearly convergent Newton's method can be devised -- the semismooth Newton method (SSNM). Moreover,
with controls discretized using piecewise constant finite elements,  Hinterm{\" u}ller and Ulbrich~\cite{MR2085262} 
have shown that the SSNM converges in a mesh-independent number of iterations for problems like~\eqref{eq:contprob}, 
so it is a very efficient solution method in terms of number of optimization iterations. 
A comprehensive discussion of SSNMs can be found in~\cite{MR2839219}. However, as with Newton's method, each SSNM iteration
requires the solution of a linear system, and the efficiency of the SSNM depends on the availability of high quality
preconditioners for the linear systems involved. Naturally, the question of devising preconditioners for 
SSNMs has received a lot of attention
in recent years, especially in the context of optimal control problems constrained by PDEs, e.g, 
see~\cite{MR2740620, MR2891920, doi:10.1137/140975711}, where preconditioners are primarily
targeting the sparse and indefinite KKT systems arising in the solution process.
For problems formulated as~\eqref{eq:contprob}, the SSNM solution essentially requires inverting  
at each iteration a principal submatrix of the matrix representing a discrete version~$\op{H}_h$
of $\op{H}=(\op{K}^*\op{K} +\beta I)$, where $h$ denotes the mesh size.
The multigrid preconditioner developed by Dr{\u a}g{\u a}nescu and Dupont in~\cite{MR2429872} for the operator
$\op{H}_h$ arising in the unconstrained problem~\eqref{eq:illposedprobreg} is shown, under reasonable conditions, 
to be of optimal order with respect to the discretization: namely, if we denote by~$\op{S}_h$ the multigrid 
preconditioner (thought of as an approximation of $(\op{H}_h)^{-1}$),  then
\beq
\label{eq:optordineq}
1-C\frac{h^p}{\beta} \le \frac{\innprd{\op{S}_h u}{u}}{\innprd{(\op{H}_h)^{-1} u}{u}} \le 1+C\frac{h^p}{\beta}, \ \ \forall u\in \op{U}_h\setminus\{0\},
\eeq
where $p>0$ is the convergence order of the discretization and $\op{U}_h\subset \op{U}$ is the discrete control space;
for continuous piecewise linear discretizations we have $p=2$.  A natural extension of the ideas in~\cite{MR2429872}
led to the suboptimal multigrid preconditioner developed by Dr{\u a}g{\u a}nescu in~\cite{doi:10.1080/10556788.2013.854356} for
principal submatrices of $\op{H}_h$, where $p$ is shown to essentially be~$1/2$ for a piecewise linear discretization.
The key aspect of defining the multigrid preconditioners for principal submatrices of~$\op{H}_h$ is the definition 
of the coarse spaces. The natural domain of a principal submatrix of~$\op{H}_h$, thought as an operator,  is a subspace
of $\op{U}_h$. The multigrid preconditioner developed in~\cite{doi:10.1080/10556788.2013.854356} is based on 
constructing coarse spaces that are subspaces of~$\op{U}_h$, i.e., conforming coarse spaces. 
A similar strategy was used by Hoppe and Kornhuber~\cite{MR1276702} in devising multilevel methods for obstacle
problems. However, a visual inspection of the eigenvectors corresponding to the extreme joint eigenvalues of $(\op{H}_h)^{-1}$ and
$\op{S}_h$ in~\cite{doi:10.1080/10556788.2013.854356} suggests that the multigrid preconditioner is suboptimal
precisely because of  the conformity of the coarse spaces.
Thus in this work we defined a new multigrid preconditioner based on non-conforming coarse spaces. 
While the new construction is 
limited to piecewise constant approximations of the controls, we can show that, under reasonable conditions, the approximation order 
of the preconditioner in~\eqref{eq:optordineq} is $p=1$. It also turns out that the analysis of the new preconditioner is
quite different from the analysis in~\cite{doi:10.1080/10556788.2013.854356}; fortunately it is also simpler.

This article is organized as follows: in  Section~\ref{sec:problem} we give a formal description of  the problem and we briefly describe the SSNM 
to justify the necessity of preconditioning principal submatrices of $\op{H}_h$. Section~\ref{sec:two-grid} forms the core of the article; here
we introduce and analyze the two-grid preconditioner, the main result being Theorem~\ref{th:optordprec}. In Section~\ref{sec:multigrid}
we extend the two-grid results to multigrid; this section follows closely the analogue extension in~\cite{doi:10.1080/10556788.2013.854356} with
certain modifications required by the non-conforming coarse spaces. In Section~\ref{sec:numerics} we show numerical experiments
conducted on two test problems: the elliptic constrained problem~\eqref{eq:contprobPDE} and the box-constrained image deblurring problem. We formulate
a set of conclusions in Section~\ref{sec:conclusions}. We included in Appendix~\ref{sec:appendixdeblur} a convergence analysis of a Gaussian
blurring operator that may also be of independent interest.

\section{Problem formulation}
\label{sec:problem}
To fix ideas we assume the compactly embedded space to be $\op{V}\eqdef H_0^1(\Omega)$ and $\Omega$
to be polygonal or polyhedral. We denote by $\nnorm{u}_1=\nnorm{u}_{H^1_0(\Omega)}$ and we use the convention
$\nnorm{u}_0 = \nnorm{u}$. We also define the $H^{-1}$-norm by
\beqs
\nnorm{u}_{-1}\eqdef \sup_{v\in \op{V}\setminus\{0\}}\frac{\abs{\innprd{u}{v}}}{\nnorm{v}_1}\ ,
\eeqs
where $\innprd{u}{v}$ denotes the $L^2(\Omega)$-inner product.
We focus on a discrete version of~\eqref{eq:contprob} obtained by
discretizing the continuous operator $\op{K}$ using piecewise constant finite elements,
as considered by Hinterm{\" u}ller and Ulbrich in~\cite{MR2085262}. Let  $(\op{T}_{j})_{j\in \N}$  be  a family 
of shape-regular, nested triangulations of $\Omega$
with  with $h_j=\max_{T\in \op{T}_{j}} \mathrm{diam}(T)$ being the mesh-size of the triangulation $\op{T}_{j}$, and assume that
\beq
\label{eq:refinement_assump}
f_{low}\le h_{j+1}/h_{j}\le f_{high}
\eeq
for some $0<f_{low}\le f_{high}<1$ independent of $j$; for example, if $n=2$ a uniform mesh refinement
leads to $f_{low}=f_{high} = 1/2$. 
Furthermore, let $\op{U}_j$ be the space of continuous piecewise
constant functions with respect to $\op{T}_{j}$. Since the triangulations are nested, we
have $\op{U}_j\subset \op{U}_{j+1}$ for all $j\in \N$. We assume that a family of discretizations
$\op{K}_j\in\mathfrak{L}(\op{U}_j, \op{V}_j),\ j\in \N$,  is given, 
where $\op{V}_j\subset L^2(\Omega)$ is a finite element space  with properties specified below, and 
$\op{K}_j$ represents a discrete version of $\op{K}$. Note that it is not assumed that $\op{V}_j$ be
a subspace of  $\op{V}$.
The discrete optimization problem under scrutiny~is
\beq
\label{eq:discprob}
\min_{u\in \op{U}_j} \op{J}^{\beta}_j(u)\eqdef \frac{1}{2}\nnorm{\op{K}_j u-y^{(j)}_{d}}^2 +\frac{\beta}{2}\nnorm{u}^2\ ,\ \ a^{(j)}\le u\le b^{(j)}\ \  a.e.,
\eeq
where $a^{(j)}, b^{(j)}\in \op{U}_j$ are discrete functions representing $a, b$, and $y^{(j)}_{d}=\mathrm{Proj}_{\op{V}_j} y_d$.
As in~\cite{Dra:Pet:ipm}, we assume that the operators $\op{K},~\op{K}_j$ satisfy the \emph{smoothed approximation condition} (SAC) appended by $L^2$-$L^{\infty}$ 
stability of $\op{K}_j$:
\begin{condition}[SAC]
\label{cond:condsmooth} There exists a constant $C_1$ depending on $\Omega, \op{T}_{0}$ and independent of $j$
 so that 
\begin{enumerate}
\item[{\bf [a]}] smoothing:
\begin{equation}
\label{cond:par_smooth}
\max\left(\nnorm{\op{K}^* u}_{m}, \nnorm{\op{K} u}_{m}\right) \le C_1 \norm{u},\ \ \forall u\in L^2(\Omega),\  m=0, 1\ ;
\end{equation}
\item[{\bf [b]}] smoothed approximation: for $j\in \N$
\begin{equation}
\label{cond:consist}
\nnorm{\op{K} u - \op{K}_j u}  \le C_1 h_j\norm{u},
\ \ \forall u\in \op{U}_j;
\end{equation}
\item[{\bf [c]}] $L^2$-$L^{\infty}$ stability: for $j\in \N$
\begin{equation}
\label{cond:l2linf}
\nnorm{\op{K}_j u}_{L^{\infty}(\Omega)}  \le C_1 \norm{u},
\ \ \forall u\in \op{U}_j.
\end{equation}
\end{enumerate}
\end{condition}
\begin{remark}
A simple consequence of Condition~$\ref{cond:condsmooth}$ is that
\beq
\label{eq:h1l2est}
\nnorm{\op{K} u} \le C_1 \norm{u}_{-1}\ ,\ \ \forall u\in \op{U}\ .
\eeq
\end{remark}
\begin{proof}
Indeed, we have
$$
\nnorm{\op{K}u}^2=\innprd{u}{\op{K}^*\op{K} u} \le \nnorm{u}_{-1}\cdot \nnorm{\op{K}^*\op{K} u}_{1} \le
C_1 \nnorm{u}_{-1}\cdot  \nnorm{\op{K} u}\ ,
$$
and~\eqref{eq:h1l2est} follows.
\end{proof}

To describe the semismooth Newton method for the discrete optimization problem, we rewrite~\eqref{eq:discprob} in
vector form. Let $N_j=\mathrm{dim}(\op{U}_j)$ and $\varphi_1^{(j)},\dots, \varphi_{N_j}^{(j)}$ 
be the standard piecewise constant finite element basis in $\op{T}_{h_j}$.
First denote by $\vect{K}_j$ the matrix representing the operator $\op{K}_j$, and let $\vect{M}_j$ 
be the mass matrix in $\op{U}_j$, and $\widetilde{\vect{M}}_j$ the mass matrix in $\op{V}_j$. Note that the mass matrices $\vect{M}_j$
are diagonal. Then~\eqref{eq:discprob} is equivalent to
\beq
\label{eq:vectprob}
\min_{\vect{u}\in \R^{N_j}} J_j^{\beta}(\vect{u})\eqdef \frac{1}{2}\abs{\vect{K}_j \vect{u}-\vect{y}^{(j)}_{d}}^2_{\widetilde{\vect{M}}_j}
+\frac{\beta}{2}\abs{\vect{u}}^2_{\vect{M}_j}\ ,\ \ \vect{a}^{(j)}\le \vect{u}\le \vect{b}^{(j)},
\eeq
where $\vect{y}^{(j)}_{d}$ is the vector representing $y^{(j)}_{d}$, and $\abs{\vect{u}}_{\vect{M}} \eqdef \sqrt{\vect{u}^T \vect{M}\vect{u}}$.
To simplify the exposition  we omit the sub- and superscripts $j$ for the remainder of this section, so that
$\vect{K}=\vect{K}_j$, $N=N_j$, $\vect{a}=\vect{a}^{(j)}$,
etc. 
Similarly to~\eqref{eq:contprob}, the discrete optimization problem~\eqref{eq:vectprob} has a unique solution, which  satisfies
the KKT system
\beq
\label{eq:KKT}
\left\{
\begin{array}{l}
(\vect{K}^*\vect{K}+\beta\vect{I})\vect{u} +{\bm \lambda}_a-{\bm \lambda}_b = \vect{f}\\
\vect{a}-\vect{u}\le \vect{0},\ {{\bm \lambda}}_a\ge 0,\ (\vect{a}-\vect{u})\cdot {{\bm \lambda}}_a = \vect{0}\ \\
\vect{u}-\vect{b}\le \vect{0},\ {{\bm \lambda}}_b\ge 0,\ (\vect{u}-\vect{b})\cdot {{\bm \lambda}}_b = \vect{0}\ ,\\
\end{array}
\right.
\eeq
where $\vect{K}^*=\vect{M}^{-1}\vect{K}^T\widetilde{\vect{M}}$ is the adjoint of $\vect{K}$ with respect to the $L^2$-inner
product, $\vect{f}=\vect{K}^*\vect{y}_d$, and ${\bm \lambda}_a, {\bm \lambda}_b\in \R^N$ are the Lagrange multipliers. The inequalities
$\vect{u}\le \vect{v}$ and the vector-valued product $\vect{u}\cdot \vect{v}$ are to be understood componentwise.
The fact that we are able to write the KKT system in the form~\eqref{eq:KKT} is not completely obvious, and it relies 
on  the mass matrices $\vect{M}_j$ being diagonal. It is worth noting that
in~\cite{Dra:Pet:ipm, doi:10.1080/10556788.2013.854356}, where the controls were discretized using continuous piecewise linear functions, 
the mass-matrices were intentionally modified (equivalently  to using a quadrature for computing $L^2$-inner products) so that they be diagonal.
Following~\cite{MR1972219} (see also~\cite{doi:10.1080/10556788.2013.854356}), the complementarity problem~\eqref{eq:KKT} can be written 
as the non-smooth nonlinear system
\beq
\label{eq:sscomp}
\left\{
\begin{array}{l}
(\vect{K}^*\vect{K}+\beta\vect{I})\vect{u} -{\bm \lambda} = \vect{f}\\
{\bm \lambda}-\max(\vect{0}, \vect{K}^*\vect{K}\vect{u}-\vect{f}+\beta\vect{a}) - \min(\vect{0}, \vect{K}^*\vect{K}\vect{u}-\vect{f}+\beta\vect{b})=\vect{0}\ ,
\end{array}
\right.
\eeq
where ${\bm \lambda}=\bm{\lambda}_a-\bm{\lambda}_b$, $\bm{\lambda}_a=\max(\bm{\lambda},\vect{0}), \bm{\lambda}_b=-\min(\bm{\lambda},\vect{0})$.
We leave it as an exercise to verify that~\eqref{eq:KKT} is equivalent to~\eqref{eq:sscomp}.
Given the solution $(\vect{u},\bm{\lambda})$ of~\eqref{eq:KKT},  the following 
sets play a role in understanding the equivalence of~\eqref{eq:KKT} and~\eqref{eq:sscomp}:
\beqs
\label{eq:actinact}
\op{I}&=&\{i\in \{1,\dots,N\}\ :\ {\bm \lambda}_i+\beta(\vect{a}_i -\vect{u}_i) < 0\ \ \mathrm{and}\ \  {\bm \lambda}_i+\beta(\vect{b}_i -\vect{u}_i)>0\}\\
\op{A}^a&=&\{i\in \{1,\dots,N\}\ :\ {\bm \lambda}_i+\beta(\vect{a}_i -\vect{u}_i) \ge 0\ \ \mathrm{and}\ \  {\bm \lambda}_i+\beta(\vect{b}_i -\vect{u}_i)>0\}\\
\op{A}^b&=&\{i\in \{1,\dots,N\}\ :\ {\bm \lambda}_i+\beta(\vect{a}_i -\vect{u}_i) < 0\ \ \mathrm{and}\ \  {\bm \lambda}_i+\beta(\vect{b}_i -\vect{u}_i)\le 0\}\ .
\eeqs
Assume~\eqref{eq:sscomp} holds. If $i\in \op{I}$, the second equality  in~\eqref{eq:sscomp} implies that
${\bm \lambda}_i=0$; hence $\vect{a}_i < \vect{u}_i<\vect{b}_i$, so the constraint
corresponding to the $i^{\mathrm{th}}$ component of $\vect{u}$ is inactive. Instead, if $i\in \op{A}^a$, then $\vect{u}_i=\vect{a}_i$, so the lower constraints
are active; similarly, if $i\in \op{A}^b$, then $\vect{u}_i=\vect{b}_i$, so the upper constraints are active. So, if $(\vect{u}, {\bm \lambda})$
is the solution of~\eqref{eq:sscomp}, then $\op{I}$ is the set
of indices where the constraints are inactive, $\op{A}^a$ is the set of indices where the lower constraints are active, while
$\op{A}^b$ is the set of indices where the upper constraints are active. 

The system~\eqref{eq:sscomp} is in fact semismooth due to the fact that the
function $\vect{u}\mapsto \max(\vect{u},\vect{0})$ (from $\R^N$ to $\R^N$) is slantly differentiable~\cite{MR1972219}. 
Consequently,~\eqref{eq:sscomp} can be solved efficiently 
using the SSNM, which is  equivalent to the primal-dual active set method described below, as shown in~\cite{MR1972219}. 
The equivalence of the two is used to prove that the convergence is superlinear.
The SSNM is an iterative process that attempts to identify the sets $\op{I},~\op{A}^a,~\op{A}^b$ where the inequality constraints
are active/inactive. More precisely, at the $k^{\mathrm{th}}$ iteration,  
given sets $\op{I}_{k},~\op{A}^a_{k},~\op{A}^b_{k}$ partitioning $\{1,\dots,N\}$, we solve the system
\beq
\left\{
\begin{array}{l}\vspace{5pt}
  (\vect{K}^*\vect{K} + \beta \vect{I})\vect{u}^{(k+1)}-{\bm \lambda}^{(k+1)} =\vect{f}\ ,\\\vspace{5pt}
  \vect{u}^{(k+1)}_i=\vect{a}_i,\ \  \mathrm{for}\ \ i\in \op{A}^a_k,\ \ \  \vect{u}^{(k+1)}_i=\vect{b}_i,\ \  \mathrm{for}\ \ i\in \op{A}^b_k\ , \\\vspace{5pt}
  {{\bm \lambda}}^{(k+1)}_i=0,\ \  \mathrm{for}\ \ i\in \op{I}_k\ .
\end{array}
\label{eq:ssnit}
\right .
\eeq
The solution $(\vect{u}^{(k+1)}, {\bm \lambda}^{(k+1)})$ is then used to define the new sets
\beqs
\op{I}_{k+1}&=&\{i\ :\ 
{\bm \lambda}^{(k+1)}_i+\beta(\vect{a}_i -\vect{u}^{(k+1)}_i) < 0\ \ \mathrm{and}\ \  {\bm \lambda}^{(k+1)}_i+\beta(\vect{b}_i -\vect{u}^{(k+1)}_i)>0\}\ ,\\
\op{A}^a_{k+1}&=&\{i\ :\ 
{\bm \lambda}^{(k+1)}_i+\beta(\vect{a}_i -\vect{u}^{(k+1)}_i) \ge 0\ \ \mathrm{and}\ \  {\bm \lambda}^{(k+1)}_i+\beta(\vect{b}_i -\vect{u}^{(k+1)}_i)>0\}\ ,\\
\op{A}^b_{k+1}&=&\{i\ :\ 
{\bm \lambda}^{(k+1)}_i+\beta(\vect{a}_i -\vect{u}^{(k+1)}_i) < 0\ \ \mathrm{and}\ \  {\bm \lambda}^{(k+1)}_i+\beta(\vect{b}_i -\vect{u}^{(k+1)}_i)\le 0\}\ .
\eeqs
The key  problem in~\eqref{eq:ssnit} is to identify $\vect{u}^{(k+1)}_i$ for $i\in\op{I}_k$. 
If we denote the Hessian of~$J^{\beta}$ from~\eqref{eq:vectprob} by
$$\vect{H}=\vect{K}^*\vect{K} + \beta \vect{I}$$ and partition the matrix 
$\vect{H}$ according to the sets $\op{I}_{k}$ and $\op{A}_k=\op{A}^a_{k}\cup\op{A}^b_{k}$
$$
\vect{H}^{(k)}_{\mathrm{II}}=\vect{H}(\op{I}_{k},\op{I}_{k}),\ \vect{H}^{(k)}_{\mathrm{IA}}=\vect{H}(\op{I}_{k},\op{A}_{k}),\ 
\vect{H}^{(k)}_{\mathrm{AI}}=\vect{H}(\op{A}_{k},\op{I}_{k}),\ \vect{H}^{(k)}_{\mathrm{AA}}=\vect{H}(\op{A}_{k},\op{A}_{k}),\ 
$$
(we used M{\footnotesize ATLAB} notation to describe submatrices) and the vectors $\vect{u}^{(k+1)}$ and $f^{(k)}$~as 
$$\
\vect{u}^{(k+1)}_{\mathrm{I}}=\vect{u}(\op{I}_k),\ \vect{u}^{(k+1)}_{\mathrm{A}}=\vect{u}(\op{A}_k),\ 
\vect{f}^{(k)}_{\mathrm{I}}=\vect{f}(\op{I}_k),\ \vect{f}^{(k)}_{\mathrm{A}}=\vect{f}(\op{A}_k),
$$ 
then $\vect{u}^{(k+1)}_{\mathrm{A}}$ is given explicitly in~\eqref{eq:ssnit}, $\bm{\lambda}^{(k+1)}_{\mathrm{I}}=\vect{0}$, and $\vect{u}^{(k+1)}_{\mathrm{I}}$ satisfies
\beq
\label{eq:subsyst}
\vect{H}^{(k)}_{\mathrm{II}} \vect{u}^{(k+1)}_{\mathrm{I}} = \vect{f}^{(k)}_{\mathrm{I}} - \vect{H}^{(k)}_{\mathrm{IA}} \vect{u}^{(k+1)}_{\mathrm{A}}\ .
\eeq
The remaining components of $\bm{\lambda}^{(k+1)}$ are given by
$$
\bm{\lambda}^{(k+1)}_{\mathrm{A}}= \vect{H}^{(k)}_{\mathrm{AI}} \vect{u}^{(k+1)}_{\mathrm{I}}+\vect{H}^{(k)}_{\mathrm{AA}} \vect{u}^{(k+1)}_{\mathrm{A}}- \vect{f}^{(k)}_{\mathrm{A}}\ .
$$
Therefore, the main challenge in solving~\eqref{eq:ssnit} (which is a linear system) is in fact solving~\eqref{eq:subsyst}.
The goal of this work is to construct and analyze multigrid preconditioners for the matrices $\vect{H}^{(k)}_{\mathrm{II}}$ 
appearing in~\eqref{eq:subsyst}.

\section{The two-grid preconditioner}
\label{sec:two-grid}
As in~\cite{Dra:Pet:ipm, doi:10.1080/10556788.2013.854356}, we start by designing a two-grid preconditioner
for the principal submatrices of the Hessian $\vect{H}_j=(\vect{K}_j^*\vect{K}_j + \beta \vect{I})$
arising in the SSNM solution process of~\eqref{eq:vectprob}, then we follow the idea
in~\cite{MR2429872} to extend in Section~\ref{sec:multigrid} the two-grid preconditioner to a multigrid preconditioner of similar asymptotic quality.
In this section we assume that we  are solving~\eqref{eq:vectprob} at a fixed level $j$, and that we reached a certain iterate $k$ 
in the SSNM process, with a current guess at the inactive set
given by $\op{I}^{(j)}=\op{I}^{(j)}_k$. Since we are not changing the SSNM iteration we discard the sub- and superscripts $k$, and we
refer to $\op{I}^{(j)}_k$ as the ``current inactive set''.

For constructing the preconditioner it is preferable to regard the matrix $\vect{H}_j$ as a discretization
of the operator  
\beqs
\op{H}_j=(\op{K}_j^*\op{K}_j+\beta I)\in \mathfrak{L}(\op{U}_j)
\eeqs
representing the Hessian of $\op{J}^{\beta}_j$ in~\eqref{eq:discprob}.
We first define the (current) {\bf  inactive space}
$$
\op{U}_j^{\mathrm{I}} \eqdef \mathrm{span}\{\varphi^{(j)}_i\ :\ i\in \op{I}^{(j)}\} \subseteq \op{U}_j\ .
$$
Furthermore, denote by $\pi^{\mathrm{I}}_j$ the $L^2$-projection onto $\op{U}^{\mathrm{I}}_j$ and by 
$$
\Omega_j^{\mathrm{I}} = \bigcup_{i\in \op{I}^{(j)}} \mathrm{supp} (\varphi^{(j)}_i) \subset \Omega
$$
the {\bf inactive domain}. The matrix $\vect{H}^{\mathrm{I}}_j$ represents the
operator
\beq
\label{eq:inacthessdef}
\op{H}^{\mathrm{I}}_j \eqdef \pi^{\mathrm{I}}_j (\op{K}_j^*\op{K}_j+\beta I)\op{E}_j^{\mathrm{I}} \in \mathfrak{L}(\op{U}_j^{\mathrm{I}}),
\eeq
called here the {\bf inactive Hessian}, where $\op{E}_j^{\mathrm{I}}:\op{U}^{\mathrm{I}}_j\to \op{U}_j$ is the extension operator. Thus, our goal is to construct a two-grid preconditioner for
$\op{H}^{\mathrm{I}}_j$.

The first step, and perhaps the most notable achievement in this work,
is the construction of an appropriate coarse space: {\bf we define 
the coarse inactive space as the span of all coarse basis functions whose support 
intersect $\Omega_j^{\mathrm{I}}$ nontrivially}, i.e.,
\beqs
\op{U}_{j-1}^{\mathrm{I}} \eqdef \mathrm{span}\{\varphi^{(j-1)}_i\ :\ i\in \op{I}^{(j-1)}\} \subseteq \op{U}_{j-1}\ ,
\eeqs
with
\beq
\label{eq:coarseixdef}
\op{I}^{(j-1)} \eqdef \left\{i\in\{1,\dots, N_{j-1}\}\  :\ \mu(\mathrm{supp}(\varphi_i^{(j-1)})\cap \Omega_j^{\mathrm{I}})>0\right\}\ ,
\eeq
where $\mu$ is the Lebesgue measure in $\R^n$. Similarly, we define the coarse inactive domain by
\beq
\Omega_{j-1}^{\mathrm{I}} = \bigcup_{i\in \op{I}^{(j-1)}} \mathrm{supp} (\varphi^{(j-1)}_i) \subset \Omega\ .
\eeq

A few remarks are in order. First, since $\op{T}_j$ is a refinement of $\op{T}_{j-1}$, it follows that the set
$(\mathrm{supp}(\varphi_i^{(j-1)})\cap \Omega_j^{\mathrm{I}})$ is a (possibly empty) union of $\op{T}_j$-elements. Since each element
making up $\Omega_j^{\mathrm{I}}$ lies inside one element making up $\Omega_{j-1}^{\mathrm{I}}$, we have the inclusion
\beq
\label{eq:setinclusion}
\Omega_{j}^{\mathrm{I}}  \subseteq \Omega_{j-1}^{\mathrm{I}}\ .
\eeq
Second, we do not expect in general that $\op{U}_{j-1}^{\mathrm{I}} \subseteq  \op{U}_{j}^{\mathrm{I}}$. However, we have
\beq
\label{eq:equivdomaineqspacenest}
\op{U}_{j-1}^{\mathrm{I}} \subseteq  \op{U}_{j}^{\mathrm{I}}\ \ \ \ \mathrm{if\ and \ only\ if}\ \ \ \ \Omega_{j-1}^{\mathrm{I}}=\Omega_j^{\mathrm{I}}\ .
\eeq
We also denote
$$\partial_{\mathrm{n}}\Omega_j^{\mathrm{I}} \eqdef \Omega_{j-1}^{\mathrm{I}}\setminus \Omega_j^{\mathrm{I}}, $$
a set we call the numerical boundary of $\Omega_j^{\mathrm{I}}$ with respect to the coarse mesh. In Figures~\ref{fig:grid8} 
and~\ref{fig:grid32_64} we show the sets~$\Omega_j^{\mathrm{I}}$ (dark-gray) and $\partial_{\mathrm{n}}\Omega_j^{\mathrm{I}}$ 
(light-gray) for a few cases on uniform triangular grids on $\Omega=[0,1]\times[0,1]$.

We would like to contrast the definition~\eqref{eq:coarseixdef} of the coarse indices with that of Dr{\u a}g{\u a}nescu~\cite{doi:10.1080/10556788.2013.854356},
where a coarse basis function enters the span of the coarse inactive space if $\mathrm{supp}(\varphi_i^{(j-1)})\subseteq \Omega_j^{\mathrm{I}}$; this 
would define a coarse inactive space that lies inside the fine inactive space $\op{U}^{\mathrm{I}}_{j}$, 
and the inclusion~\eqref{eq:setinclusion} would be reversed, that is,
the coarse inactive domain would be included in the fine inactive domain.

\begin{figure}[!hb]
        \includegraphics[width=5.3in]{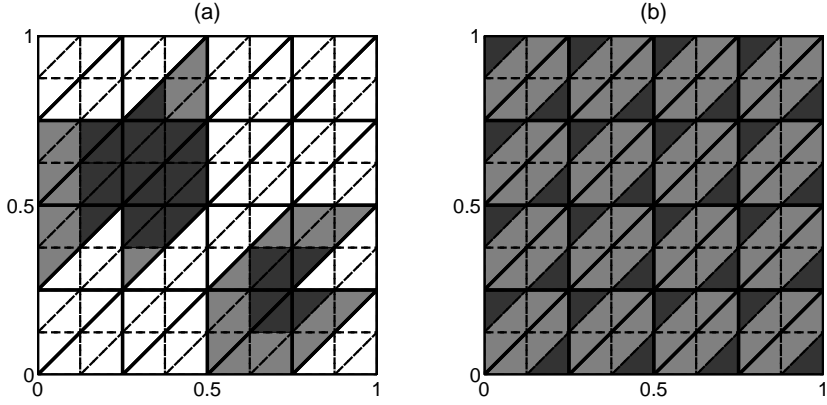}
\caption{In dark-gray we show $\Omega_j^{\mathrm{I}}$, and in light-gray we represent $\partial_{\mathrm{n}}\Omega_j^{\mathrm{I}}$ 
for $n=8$ on a uniform triangular grid. 
In {\textnormal{(a)}} $\Omega_j^{\mathrm{I}}$ is the best representation of a union of two disks on the current grid; 
in~\textnormal{(b)} $\Omega_j^{\mathrm{I}}$ is a
set for which $\Omega_{j-1}^{\mathrm{I}}=\Omega=[0,1]\times[0,1]$.}
\label{fig:grid8}
\end{figure}

\begin{figure}[!htb]
\begin{center}
        \includegraphics[width=5.3in]{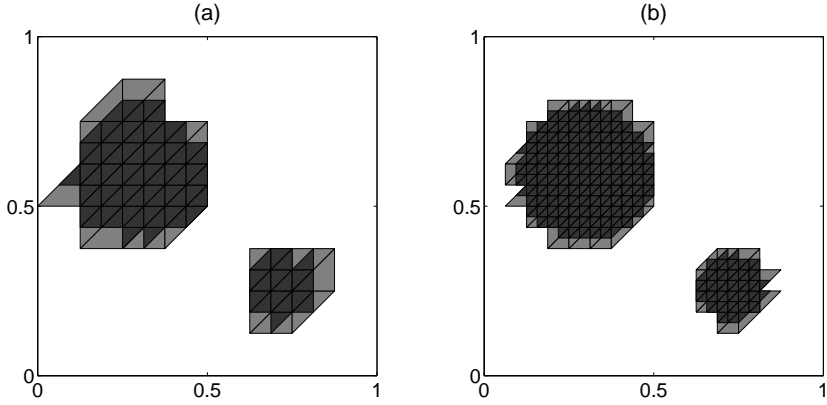}
\caption{In dark-gray we show $\Omega_j^{\mathrm{I}}$, and in light-gray we represent $\partial_{\mathrm{n}}\Omega_j^{\mathrm{I}}$ 
for $n=16$ {\textnormal{(a)}} and $n=32$ {\textnormal{(b)}} on a uniform triangular grid $($meshlines are omitted 
to enhance picture clarity$)$. For both cases,
$\Omega_j^{\mathrm{I}}$ is the best representation on the current grid of the same union of two disks 
used to generate $\Omega_j^{\mathrm{I}}$ in Figure~$\ref{fig:grid8}$~\textnormal{(a)}. The ratio of the areas of the numerical boundaries
in \textnormal{(b)} vs. \textnormal{(a)} is approximately~$0.64$.}
\label{fig:grid32_64}
\end{center}
\end{figure}

We now define the two-grid preconditioner $\op{M}_j\in \mathfrak{L}(\op{U}_j^{\mathrm{I}})$ by
\beq
\label{eq:twogridprec}
\op{M}_j \eqdef \pi^{\mathrm{I}}_j\left((\op{H}^{\mathrm{I}}_{j-1})^{-1}\pi^{\mathrm{I}}_{j-1}+\beta^{-1}(I- \pi^{\mathrm{I}}_{j-1})\right)\ .
\eeq
The definition~\eqref{eq:twogridprec} is rooted in the two-grid preconditioner definition from~\cite{MR2429872, doi:10.1080/10556788.2013.854356}; the
difference lies the presence of the action of the projection $\pi^{\mathrm{I}}_j$ as the last step in~\eqref{eq:twogridprec} (left-most term), 
which is necessary precisely because
$\op{U}_{j-1}^{\mathrm{I}}$ is not expected to be a subspace of $\op{U}_j^{\mathrm{I}}$.
An operator related to $\op{M}_j$, necessary for the analysis, 
is  
\beq
\label{eq:twogridprecinv}
\op{S}_j \eqdef \pi^{\mathrm{I}}_j\left(\op{H}^{\mathrm{I}}_{j-1}\pi^{\mathrm{I}}_{j-1}+\beta(I- \pi^{\mathrm{I}}_{j-1})\right)\ .
\eeq
\begin{remark}
\label{rem:symmetry}
Both $\op{M}_j$ and $\op{S}_j$ are symmetric with respect to the $L^2$-inner product, that is,
\beqs
\innprd{\op{M}_j u}{v} = \innprd{u}{\op{M}_j v},\ \ \ \innprd{\op{S}_j u}{v} = \innprd{u}{\op{S}_j v},\ \ \forall u, v\in \op{U}_{j}^{\mathrm{I}}\ .
\eeqs
In addition, if $\op{U}_{j-1}^{\mathrm{I}} \subseteq  \op{U}_{j}^{\mathrm{I}}$, then 
$\op{M}_j = (\op{S}_j)^{-1}$.
\end{remark}

The key to the last assertions in Remark~\ref{rem:symmetry} is that $\pi^{\mathrm{I}}_j$ has no effect (hence can be discarded) when
$\op{U}_{j-1}^{\mathrm{I}} \subseteq  \op{U}_{j}^{\mathrm{I}}$.
Our ultimate  goal is to estimate the spectral distance between $(\op{H}^{\mathrm{I}}_{j})^{-1}$ and $\op{M}_j$, as 
a measure of their spectral equivalence (see definition below). 
As an intermediate step we
will estimate the spectral distance between $\op{H}^{\mathrm{I}}_{j}$ and $\op{S}_j$.
 
Given a Hilbert space $(\op{X},\innprd{\cdot}{\cdot})$, 
we  denote by $\mathfrak{L}_+(\op{X})$ the set of symmetric positive definite operators in $\mathfrak{L}(\op{X})$.
The spectral distance between  $A, B\in \mathfrak{L}_+(\op{X})$, introduced in~\cite{MR2429872}
to analyze multigrid preconditioners for inverse problems like~\eqref{eq:illposedprobreg},
is given by
$$
d_{\op{X}}(A,B)=\sup_{u\in \op{X}\setminus\{0\}}\Abs{\ln\frac{\innprd{A u}{u}}{\innprd{B u}{u}}}\ .
$$
If $\delta$ is the smallest number for which the following inequalities hold
$$1-\delta\le \frac{\innprd{A u}{u}}{\innprd{B u}{u}} \le 1+ \delta, \ \ \forall u\ne 0,$$
and $\delta\ll1$, then $d_{\op{X}}(A,B) \approx \delta$. The spectral distance not only allows to write the above 
inequalities in a more compact form, but some of its properties (including the fact that it is a distance function) are also used in the analysis. 
The main result in this article is the following theorem.

\begin{theorem}
\label{th:optordprec}
Assuming Condition$~\ref{cond:condsmooth}$ holds, there exists constants $\delta>0$ and  $C_{\mathrm{tg}}>0$ independent of $j$ and 
the inactive set $\op{I}^{(j)}$ so that if 
$\beta^{-1}(h_j+\mu(\partial_{\mathrm{n}}\Omega_j^{\mathrm{I}}))<\delta$ the following holds:
\beq
\label{eq:optordprec}
d_{\op{U}_j^{\mathrm{I}}}(\op{M}_j,(\op{H}^{\mathrm{I}}_{j})^{-1})\le C_{\mathrm{tg}} \beta^{-1}\left(h_j + \mu(\partial_{\mathrm{n}}\Omega_j^{\mathrm{I}})\right)\ . 
\eeq
\end{theorem}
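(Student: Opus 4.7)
The plan is to invoke the triangle inequality for the spectral distance with the operator $\op{S}_j$ of~\eqref{eq:twogridprecinv} as an intermediate,
$$d_{\op{U}_j^{\mathrm{I}}}(\op{M}_j, (\op{H}^{\mathrm{I}}_j)^{-1}) \le d_{\op{U}_j^{\mathrm{I}}}(\op{M}_j, \op{S}_j^{-1}) + d_{\op{U}_j^{\mathrm{I}}}(\op{S}_j, \op{H}^{\mathrm{I}}_j),$$
using that $d(A^{-1}, B^{-1}) = d(A, B)$ for symmetric positive definite $A, B$. The second term should capture the discretization error between levels $j-1$ and $j$ and produce the $h_j/\beta$ contribution; the first term should capture the nonconformity error and produce the $\mu(\partial_{\mathrm{n}}\Omega_j^{\mathrm{I}})/\beta$ contribution.

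For $d_{\op{U}_j^{\mathrm{I}}}(\op{S}_j, \op{H}^{\mathrm{I}}_j)$ I would compute the quadratic forms directly on $u \in \op{U}_j^{\mathrm{I}}$: $\innprd{\op{H}^{\mathrm{I}}_j u}{u} = \nnorm{\op{K}_j u}^2 + \beta \nnorm{u}^2$, and after algebra using $\pi^{\mathrm{I}}_j u = u$ and the fact that $\op{H}^{\mathrm{I}}_{j-1}$ acts as $\op{K}^*_{j-1}\op{K}_{j-1} + \beta I$ on $\op{U}_{j-1}^{\mathrm{I}}$, $\innprd{\op{S}_j u}{u} = \nnorm{\op{K}_{j-1}\pi^{\mathrm{I}}_{j-1} u}^2 + \beta \nnorm{u}^2$. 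The difference factors via $|\nnorm{a}^2 - \nnorm{b}^2| \le (\nnorm{a}+\nnorm{b})\nnorm{a-b}$; inserting $\op{K} u$ and $\op{K}\pi^{\mathrm{I}}_{j-1} u$ I would split $\op{K}_j u - \op{K}_{j-1}\pi^{\mathrm{I}}_{j-1} u$ into three pieces controlled by Condition~\ref{cond:condsmooth}[a,b] and~\eqref{eq:h1l2est}. A key simplification is that for $u \in \op{U}_j^{\mathrm{I}}$ the restricted projection $\pi^{\mathrm{I}}_{j-1} u$ coincides with the full $L^2$-projection onto $\op{U}_{j-1}$ (since $\Omega_j^{\mathrm{I}} \subseteq \Omega_{j-1}^{\mathrm{I}}$ and the piecewise-constant basis is orthogonal), so the standard negative-norm estimate of order $h_{j-1}$ applies. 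Combining yields $|\innprd{(\op{S}_j - \op{H}^{\mathrm{I}}_j)u}{u}| \le C h_j \nnorm{u}^2$, and dividing by $\innprd{\op{H}^{\mathrm{I}}_j u}{u} \ge \beta\nnorm{u}^2$ produces the $h_j/\beta$ bound after linearizing the logarithm under the smallness assumption.

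For $d_{\op{U}_j^{\mathrm{I}}}(\op{M}_j, \op{S}_j^{-1})$ I would exploit the Fenchel-dual characterization $\innprd{A^{-1}u}{u} = \max_v \{2\innprd{u}{v} - \innprd{Av}{v}\}$. A direct computation verifies that the operators appearing inside the outermost $\pi^{\mathrm{I}}_j$ in~\eqref{eq:twogridprec} and~\eqref{eq:twogridprecinv}, namely $\op{H}^{\mathrm{I}}_{j-1}\pi^{\mathrm{I}}_{j-1} + \beta(I - \pi^{\mathrm{I}}_{j-1})$ and $(\op{H}^{\mathrm{I}}_{j-1})^{-1}\pi^{\mathrm{I}}_{j-1} + \beta^{-1}(I - \pi^{\mathrm{I}}_{j-1})$, are mutual inverses on $\op{U}$; combined with $\pi^{\mathrm{I}}_j u = u$ this shows $\innprd{\op{M}_j u}{u}$ equals the unrestricted maximum of the quadratic $F(w) = 2\innprd{u}{w} - \innprd{[\op{H}^{\mathrm{I}}_{j-1}\pi^{\mathrm{I}}_{j-1} + \beta(I - \pi^{\mathrm{I}}_{j-1})]w}{w}$ over $w \in \op{U}$, whereas $\innprd{\op{S}_j^{-1} u}{u}$ equals the maximum over $w \in \op{U}_j^{\mathrm{I}}$. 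Since $F$ is quadratic the gap between these maxima equals the corresponding energy of $w^* - \pi^{\mathrm{I}}_j w^*$ with $w^*$ the unconstrained optimum; explicit substitution gives $(I - \pi^{\mathrm{I}}_j)w^* = (I - \pi^{\mathrm{I}}_j)[(\op{H}^{\mathrm{I}}_{j-1})^{-1} - \beta^{-1}I]\pi^{\mathrm{I}}_{j-1} u$, whose argument lies in $\op{U}_{j-1}^{\mathrm{I}}$. Consequently $(I - \pi^{\mathrm{I}}_j)w^*$ is supported precisely in $\partial_{\mathrm{n}}\Omega_j^{\mathrm{I}}$, and its $L^2$-mass is bounded by $\sqrt{\mu(\partial_{\mathrm{n}}\Omega_j^{\mathrm{I}})}$ times the $L^\infty$-norm of the coarse argument, yielding the desired contribution.

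The main obstacle is this last $L^\infty$ estimate: one must bound $\nnorm{[(\op{H}^{\mathrm{I}}_{j-1})^{-1} - \beta^{-1}I]\pi^{\mathrm{I}}_{j-1} u}_{L^\infty}$ in terms of $\nnorm{u}$ uniformly in $j$. Via the resolvent identity $(\op{H}^{\mathrm{I}}_{j-1})^{-1} - \beta^{-1}I = -\beta^{-1}(\op{H}^{\mathrm{I}}_{j-1})^{-1}\pi^{\mathrm{I}}_{j-1}\op{K}^*_{j-1}\op{K}_{j-1}$ this reduces to an $L^\infty$-stability statement for the regularized normal-equation operator, which should follow from Condition~\ref{cond:condsmooth}[c] applied to $\op{K}_{j-1}$ together with a dual $L^\infty$-property of $\op{K}^*_{j-1}$ valid in the applications of interest (e.g., $\op{K}$ self-adjoint as for the Laplace inverse, or with a symmetric kernel as in image deblurring). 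This obstacle is specific to the nonconforming construction---it has no counterpart in the conforming analysis of~\cite{doi:10.1080/10556788.2013.854356}---and it is precisely through this step that the geometric measure $\mu(\partial_{\mathrm{n}}\Omega_j^{\mathrm{I}})$ of the numerical boundary enters the final bound.
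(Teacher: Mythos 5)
Your decomposition through $\op{S}_j$ is half right. The bound $d_{\op{U}_j^{\mathrm{I}}}(\op{S}_j,\op{H}^{\mathrm{I}}_{j})\le C h_j/\beta$ is exactly the paper's Proposition~\ref{lma:HSapprox}: your identities for the two quadratic forms, the three-way splitting of $\nnorm{\op{K}_ju}^2-\nnorm{\op{K}_{j-1}\pi^{\mathrm{I}}_{j-1}u}^2$ via the continuous operator, and the observation that for $u\in\op{U}_j^{\mathrm{I}}$ the restricted projection agrees with the full coarse projection (so Lemma~\ref{lemma:projapprox} applies) all match. The paper, however, never estimates your other term $d_{\op{U}_j^{\mathrm{I}}}(\op{M}_j,\op{S}_j^{-1})$. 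It instead passes to the enriched space $\widehat{\op{U}}_j^{\mathrm{I}}$ of~\eqref{eq:spacebardef}, which \emph{contains} the coarse space, so that $\widehat{\op{M}}_j=(\widehat{\op{S}}_j)^{-1}$ holds exactly and the conforming comparison costs only $h_j/\beta$; the entire nonconformity error is shifted to comparing $(\op{H}^{\mathrm{I}}_j)^{-1}$ with the compression $\pi^{\mathrm{I}}_j(\widehat{\op{H}}^{\mathrm{I}}_j)^{-1}\eta^{\mathrm{I}}_j$, handled by the Schur-complement identity of Lemma~\ref{lma:hinv-htildeinv-comp}. There the relative error equals $\nnorm{(\op{H}^{\mathrm{I}})^{-\frac{1}{2}}\op{H}^{\mathrm{I}}_{oc}\op{G}^{-1}\op{H}^{\mathrm{I}}_{co}(\op{H}^{\mathrm{I}})^{-\frac{1}{2}}}\le\beta^{-1}\nnorm{(\op{H}^{\mathrm{I}})^{-\frac{1}{2}}\op{H}^{\mathrm{I}}_{oc}}^2$, and the last factor is bounded by $C_1^2\,\mu(\partial_{\mathrm{n}}\Omega_j^{\mathrm{I}})$ using only the $L^2$--$L^{\infty}$ stability of the \emph{forward} operator $\op{K}_j$ from part {\bf [c]} of Condition~\ref{cond:condsmooth}. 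The $(\op{H}^{\mathrm{I}})^{-\frac{1}{2}}$ weighting on both sides plus the single $\beta^{-1}$ from $\op{G}\ge\beta I$ is what produces the optimal $\mu/\beta$.

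Your Fenchel-duality treatment of the first term has two concrete defects. First, the $L^{\infty}$ bound you flag as the ``main obstacle'' is not merely an obstacle but lies outside the hypotheses: Condition~\ref{cond:condsmooth} assumes $L^2$--$L^{\infty}$ stability of $\op{K}_j$, not of $\op{K}_j^*$, so an argument requiring $\nnorm{\op{K}_{j-1}^*g}_{L^{\infty}}\le C\nnorm{g}$ proves a weaker theorem than the one stated. Second, and more seriously, even granting that bound the powers of $\beta$ do not close. Tracing your chain, $(I-\pi^{\mathrm{I}}_j)w^*=(I-\pi^{\mathrm{I}}_j)v$ with $v=-\beta^{-1}(\op{H}^{\mathrm{I}}_{j-1})^{-1}\pi^{\mathrm{I}}_{j-1}\op{K}_{j-1}^*\op{K}_{j-1}\pi^{\mathrm{I}}_{j-1}u$, and the best available estimate (using $\nnorm{\op{K}_{j-1}(\op{H}^{\mathrm{I}}_{j-1})^{-1}\pi^{\mathrm{I}}_{j-1}\op{K}_{j-1}^*g}\le\nnorm{g}$, or spectral calculus) gives $\nnorm{v}_{L^{\infty}}\le C\beta^{-3/2}\nnorm{u}$ at best, hence a duality gap of order $\mu(\partial_{\mathrm{n}}\Omega_j^{\mathrm{I}})\,\beta^{-3}\nnorm{u}^2$; meanwhile the denominator $\innprd{\op{S}_j^{-1}u}{u}$ can be as small as $c\nnorm{u}^2$ (take $u$ aligned with the top singular vector of $\op{K}_{j-1}\pi^{\mathrm{I}}_{j-1}$). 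You therefore obtain at best $C\mu(\partial_{\mathrm{n}}\Omega_j^{\mathrm{I}})\beta^{-3}$ for the relative error, two powers of $\beta$ short of~\eqref{eq:optordprec}. The loss comes from measuring the gap $\innprd{\widehat{\op{S}}_j\xi}{\xi}$ in raw $L^2$ terms rather than relative to the energy of $u$ --- exactly the loss the paper's $(\op{H}^{\mathrm{I}})^{-\frac{1}{2}}$-weighted Schur-complement estimate is designed to avoid. Replace your first term by the enriched-space argument.
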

We postpone the proof of Theorem~\ref{th:optordprec} after a few preliminary results.
\begin{remark}
\label{rem:twogridhypopt}
Without further formalizing the argument, we would like to comment on the optimality of the result in
Theorem~$\ref{th:optordprec}$. First, it should be recognized that $\mu(\partial_{\mathrm{n}}\Omega_j^{\mathrm{I}})$ 
can be $O(1)$ for certain choices of  $\Omega^{\mathrm{I}}_j$. For example,
if $\op{T}_j$ is a uniform refinement of $\op{T}_{j-1}$ in two dimensions, and $\Omega^{\mathrm{I}}_j$ contains exactly one
level-$j$ subdivision of each of the level-$(j-1)$ triangles that make up $\Omega$, as shown in Figure~$\ref{fig:grid8}$ $($b$)$, 
then $\op{U}^{\mathrm{I}}_{j-1} = \op{U}_{j-1}$
$($the entire coarse space$)$ and $\Omega^{\mathrm{I}}_{j-1} = \Omega$; thus 
$\mu(\partial_{\mathrm{n}}\Omega_j^{\mathrm{I}}) = \frac{3}{4}\mu(\Omega)$. In this case the two-grid
preconditioner is not efficient. However, if $\Omega_j^{\mathrm{I}}$ is a good approximation of the 
correct inactive domain $\Omega^{\mathrm{I}} = \{x \in \Omega : a < u_{\min}(x) < b\}$, and $\Omega^{\mathrm{I}}$
is sufficiently regular, e.g., has a Lipschitz boundary, then we expect
$\mu(\partial_{\mathrm{n}}\Omega_j^{\mathrm{I}})\approx C h_j$. It is in this sense that
we regard Theorem~$\ref{th:optordprec}$ as proof of the fact that the two-grid preconditioner $\op{M}_j$ approximates
the operator $(\op{H}^{\mathrm{I}}_{j})^{-1}$ with optimal order. Figures~$\ref{fig:grid8}$~$($b$)$ and~$\ref{fig:grid32_64}$ $($a$)$ and $($b$)$ 
show a progression of $\partial_{\mathrm{n}}\Omega_j^{\mathrm{I}}$ $($in gray$)$
for the case when $\Omega_j^{\mathrm{I}}$ is a union of two discrete representations of disks  on  grids with $n=8, 16, 32$.
The ratio of the gray areas in Figure~$\ref{fig:grid32_64}$ $($a$)$ and $($b$)$ representing 
$\mu(\partial_{\mathrm{n}}\Omega_j^{\mathrm{I}})/\mu(\partial_{\mathrm{n}}\Omega_{j-1}^{\mathrm{I}})$ is $0.64$. Furthermore, this 
ratio converges to $1/2$ as the resolution tends to zero.
\end{remark}

The optimality result in the following lemma is a critical component for the proof of Theorem~\ref{th:optordprec}.
\begin{lemma}
\label{lemma:projapprox}
There exists a constant $C_2$ independent of the mesh-size $h_j$ and the inactive set
$\op{I}^{(j)}$ so that
\beq
\label{eq:projapprox}
\nnorm{(I-\pi_{j-1}^{\mathrm{I}})u}_{-1}\le C_2 h_j \nnorm{u}\ ,\ \ \ \forall u\in \op{U}_j^{\mathrm{I}}\ ,
\eeq
where $(I-\pi_{j-1}^{\mathrm{I}})u$ is extended with zero outside its support.
\end{lemma}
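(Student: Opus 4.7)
The plan is to estimate $\nnorm{w}_{-1}$ where $w \eqdef (I-\pi_{j-1}^{\mathrm{I}})u$ by duality, bounding $\abs{\innprd{w}{v}}$ for a generic $v \in H^1_0(\Omega)$ element-by-element on the coarse mesh $\op{T}_{j-1}$, and exploiting a zero-mean property that $w$ inherits from the $L^2$-projection onto piecewise constants.

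The first step is to enumerate the coarse elements $T\in \op{T}_{j-1}$ according to how they interact with $\Omega_j^{\mathrm{I}}$. If $\mu(T\cap \Omega_j^{\mathrm{I}})=0$, then $u|_T=0$, and the coarse basis function attached to $T$ does not belong to $\op{U}_{j-1}^{\mathrm{I}}$, so $\pi_{j-1}^{\mathrm{I}} u |_T = 0$ as well, giving $w|_T=0$. Otherwise $T\subseteq \Omega_{j-1}^{\mathrm{I}}$ and the corresponding characteristic-like basis function $\varphi_i^{(j-1)}$ lies in $\op{U}_{j-1}^{\mathrm{I}}$; the defining equation $\innprd{u-\pi_{j-1}^{\mathrm{I}} u}{\varphi_i^{(j-1)}}=0$ then forces
\[
\int_T w\, dx = 0,
\]
regardless of whether $T$ is fully inside $\Omega_j^{\mathrm{I}}$ (case 1) or only partially so (case 3, where $T$ meets $\partial_{\mathrm{n}}\Omega_j^{\mathrm{I}}$). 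This zero-average property is the crux of the argument and works uniformly over the two nontrivial cases precisely because $\vect{M}_{j-1}$ is diagonal (equivalently, because piecewise constants give $\pi_{j-1}^{\mathrm{I}} u|_T$ as the genuine mean of $u$ over all of $T$).

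The second step is to invoke Poincaré-Wirtinger on each nontrivial $T$: since $\int_T w\,dx=0$, one may replace $v$ by $v-\bar{v}_T$ at no cost, obtaining
\[
\Bigl|\int_T w v\,dx\Bigr| = \Bigl|\int_T w(v-\bar{v}_T)\,dx\Bigr| \le \nnorm{w}_{L^2(T)}\,\nnorm{v-\bar{v}_T}_{L^2(T)} \le C h_{j-1}\,\nnorm{w}_{L^2(T)}\,|v|_{H^1(T)}.
\]
Because the $L^2$-projection onto constants on $T$ is a contraction ($\nnorm{\pi_{j-1}^{\mathrm{I}} u}_{L^2(T)}\le \nnorm{u}_{L^2(T)}$ by Cauchy-Schwarz applied to $\int_T u\,dx$), one has $\nnorm{w}_{L^2(T)}\le 2\nnorm{u}_{L^2(T)}$. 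Summing over all $T$ in $\op{T}_{j-1}$ and applying Cauchy-Schwarz over the element index together with the mesh-ratio assumption~\eqref{eq:refinement_assump} (which gives $h_{j-1}\le h_j/f_{\mathrm{low}}$) yields
\[
\abs{\innprd{w}{v}}\le C h_{j-1}\,\nnorm{u}\,\snorm{v}_{H^1(\Omega)}\le C_2 h_j\,\nnorm{u}\,\nnorm{v}_1,
\]
and dividing by $\nnorm{v}_1$ and taking the supremum delivers~\eqref{eq:projapprox}.

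The only step requiring real care is the partial-overlap case: one might initially worry that on a coarse element touching $\partial_{\mathrm{n}}\Omega_j^{\mathrm{I}}$, the projection is somehow a projection only onto constants on $T\cap\Omega_j^{\mathrm{I}}$ rather than on all of $T$, which would spoil the zero-mean identity. The resolution, as noted above, is that the basis function $\varphi_i^{(j-1)}$ is supported on the full $T$, so the Galerkin orthogonality integrates over the full $T$ and $\int_T w\,dx=0$ holds unconditionally once $\mu(T\cap \Omega_j^{\mathrm{I}})>0$. Once this is recognized, the argument is simply a standard duality-plus-Poincaré estimate and the constant $C_2$ depends only on the shape regularity of $\op{T}_{j-1}$ and the mesh-ratio constants, and is independent of $\op{I}^{(j)}$.
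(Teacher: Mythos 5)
Your proof is correct and follows essentially the same route as the paper's: duality for the $H^{-1}$-norm, orthogonality of $(I-\pi_{j-1}^{\mathrm{I}})u$ to the coarse inactive space (which you express locally as $\int_T w\,dx=0$ on each coarse element meeting $\Omega_j^{\mathrm{I}}$, while the paper subtracts $\pi_{j-1}^{\mathrm{I}}v$ globally — the same thing, since the projection is local), followed by Bramble--Hilbert/Poincar\'e--Wirtinger on the coarse elements and the mesh-ratio bound $h_{j-1}\le h_j/f_{low}$. Your remark about why the zero-mean identity survives on partially covered coarse elements is exactly the point the paper's inclusion $\Omega_j^{\mathrm{I}}\subseteq\Omega_{j-1}^{\mathrm{I}}$ is designed to secure.
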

\begin{proof}
For $ u\in \op{U}_j^{\mathrm{I}}$ we have
\beqs
\nnorm{(I-\pi_{j-1}^{\mathrm{I}})u}_{-1} &=& \sup_{v\in H_0^1(\Omega)} \frac{\innprd{(I-\pi_{j-1}^{\mathrm{I}})u}{v}}{\nnorm{v}_1}.
\eeqs
Since $\mathrm{supp}((I-\pi_{j-1}^{\mathrm{I}})u)\subseteq \Omega_{j}^{\mathrm{I}}\cup \Omega_{j-1}^{\mathrm{I}}=\Omega_{j-1}^{\mathrm{I}}$, 
\beqs
\innprd{(I-\pi_{j-1}^{\mathrm{I}})u}{v} &=&\innprd{(I-\pi_{j-1}^{\mathrm{I}})u}{v-\pi_{j-1}^{\mathrm{I}}v}
 = \int_{\Omega_{j-1}^{\mathrm{I}}} ((I-\pi_{j-1}^{\mathrm{I}})u)(v-\pi_{j-1}^{\mathrm{I}}v) d\mu\\
&\le&
\nnorm{(I-\pi_{j-1}^{\mathrm{I}})u}_{L^2(\Omega_{j-1}^{\mathrm{I}})} \cdot \nnorm{(I-\pi_{j-1}^{\mathrm{I}})v}_{L^2(\Omega_{j-1}^{\mathrm{I}})} \\
&\le & 2 \nnorm{u} \cdot \nnorm{(I-\pi_{j-1}^{\mathrm{I}})v}_{L^2(\Omega_{j-1}^{\mathrm{I}})}.
\eeqs
Now
\beqs
\nnorm{(I-\pi_{j-1}^{\mathrm{I}})v}^2_{L^2(\Omega_{j-1}^{\mathrm{I}})} &=& \sum_{i\in\op{I}^{(j-1)}}\nnorm{(I-\pi_{j-1}^{\mathrm{I}})v}^2_{L^2(T_i^{(j-1)})}\\
&\le& \tilde{C}^2 h^2_{j-1} \sum_{i\in\op{I}^{(j-1)}}|v|^2_{H^1(T_i^{(j-1)})} \le f_{low}^{-2} \tilde{C}^2 h^2_j |v|^2_{H^1(\Omega)}\ ,
\eeqs
where $\tilde{C}$ is the constant (uniform with respect to $i$ and $j$ due to shape-regularity) appearing in the 
Bramble-Hilbert Lemma on each element $T_i^{(j-1)}$ in $\op{T}_{j-1}$ with\mbox{ $i\in \op{I}^{(j-1)}$}; 
we also used the fact that the 
$L^2$-projection is local for the finite element space under consideration, that is, 
$\pi_{j-1}^{\mathrm{I}}v|_{T_i^{(j-1)}}$ is the average of $v$ on the element $T_i^{(j-1)}$.
It follows that
\beqs
\innprd{(I-\pi_{j-1}^{\mathrm{I}})u}{v}& \le {f^{-1}_{low}} \tilde{C} h_j \nnorm{u}\cdot |v|_1,\ \ \forall v\in H_0^1(\Omega),
\eeqs
which implies the desired result with $C_2=f^{-1}_{low} \tilde{C}$.
\end{proof}

\begin{remark}
It is remarkable that the constant $C_2$ is independent of the inactive set, and depends only on the constant appearing in the Bramble-Hilbert lemma
and the refinement ratio.
Also, what makes the optimal estimate~\eqref{eq:projapprox} possible, is the inclusion $\Omega_{j}^{\mathrm{I}}\subseteq \Omega_{j-1}^{\mathrm{I}}$. If
our choice of spaces had led to $\Omega_{j-1}^{\mathrm{I}}\not\subseteq \Omega_{j}^{\mathrm{I}}$, then the term to estimate
would be $\nnorm{(I-\pi_{j-1}^{\mathrm{I}})v}_{L^2(\Omega_{j}^{\mathrm{I}})}$, which is expected to be of 
size $\sqrt{\mu(\Omega_{j}^{\mathrm{I}}\setminus \Omega_{j-1}^{\mathrm{I}})}$; as shown in~\textnormal{\cite{doi:10.1080/10556788.2013.854356}},
the latter term is often of size $\sqrt{h_j}$.
\end{remark}
\begin{proposition}
\label{lma:HSapprox}
Under the assumptions of Theorem$~\ref{th:optordprec}$ there exists $C_3, \delta>0$ independent of $j, \beta$
and the fine inactive set $\op{I}^{(j)}$ so that, if $h_j/\beta<\delta$, then 
\beq
\label{eq:HSapprox}
d_{\op{U}_j^{\mathrm{I}}}(\op{S}_j,\op{H}^{\mathrm{I}}_{j})\le C_3 \frac{h_j}{\beta}\ .
\eeq
\end{proposition}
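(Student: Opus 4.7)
The plan is to compute both quadratic forms $\innprd{\op{S}_j u}{u}$ and $\innprd{\op{H}^{\mathrm{I}}_j u}{u}$ explicitly for $u\in\op{U}_j^{\mathrm{I}}$ and then reduce the estimate to a single residual-type inequality controlled by the SAC and Lemma~\ref{lemma:projapprox}. Since $u\in\op{U}_j^{\mathrm{I}}$ satisfies $\pi^{\mathrm{I}}_j u = u$, I would move the outer $\pi^{\mathrm{I}}_j$ in~\eqref{eq:twogridprec}--\eqref{eq:twogridprecinv} onto the test function. Using self-adjointness of $\pi^{\mathrm{I}}_{j-1}$ together with the definition~\eqref{eq:inacthessdef} of $\op{H}^{\mathrm{I}}_{j-1}$, one finds
\begin{align*}
\innprd{\op{H}^{\mathrm{I}}_j u}{u} &= \nnorm{\op{K}_j u}^2 + \beta\nnorm{u}^2,\\
\innprd{\op{S}_j u}{u} &= \nnorm{\op{K}_{j-1}\pi^{\mathrm{I}}_{j-1} u}^2 + \beta\nnorm{\pi^{\mathrm{I}}_{j-1} u}^2 + \beta\nnorm{(I-\pi^{\mathrm{I}}_{j-1})u}^2,
\end{align*}
and the Pythagorean identity for the orthogonal projection $\pi^{\mathrm{I}}_{j-1}$ collapses the last two $\beta$-terms to $\beta\nnorm{u}^2$. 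Consequently the problem is reduced to showing that
\[
\Bigl|\nnorm{\op{K}_{j-1}\pi^{\mathrm{I}}_{j-1} u}^2 - \nnorm{\op{K}_j u}^2\Bigr| \le C h_j\nnorm{u}^2.
\]

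Next I would factor the difference of squares and estimate $\nnorm{\op{K}_{j-1}\pi^{\mathrm{I}}_{j-1} u - \op{K}_j u}$ via the telescoping
\[
\op{K}_{j-1}\pi^{\mathrm{I}}_{j-1} u - \op{K}_j u = (\op{K}_{j-1}-\op{K})\pi^{\mathrm{I}}_{j-1} u \;+\; \op{K}(\pi^{\mathrm{I}}_{j-1}-I)u \;+\; (\op{K}-\op{K}_j)u.
\]
The first and third terms are $O(h_j)\nnorm{u}$ by the SAC approximation property~\eqref{cond:consist} (using $h_{j-1}\le h_j/f_{low}$ from~\eqref{eq:refinement_assump} and the $L^2$-stability of $\pi^{\mathrm{I}}_{j-1}$). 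The middle term is where the non-conforming construction pays off: I would combine the smoothing-type estimate~\eqref{eq:h1l2est}, $\nnorm{\op{K} v}\le C_1\nnorm{v}_{-1}$, with the key $H^{-1}$ bound~\eqref{eq:projapprox} from Lemma~\ref{lemma:projapprox} to obtain $\nnorm{\op{K}(I-\pi^{\mathrm{I}}_{j-1})u}\le C_1 C_2 h_j\nnorm{u}$. Adding these three contributions yields the required $O(h_j)\nnorm{u}$ bound.

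To close the proof I would invoke uniform $L^2$-continuity of both $\op{K}_j$ and $\op{K}_{j-1}$ (an immediate consequence of~\eqref{cond:par_smooth} and~\eqref{cond:consist}) to obtain $\nnorm{\op{K}_j u} + \nnorm{\op{K}_{j-1}\pi^{\mathrm{I}}_{j-1} u}\le C\nnorm{u}$, so that $|\nnorm{a}^2-\nnorm{b}^2|\le \nnorm{a-b}(\nnorm{a}+\nnorm{b})$ gives the desired $Ch_j\nnorm{u}^2$ bound on the numerator. Dividing by $\innprd{\op{H}^{\mathrm{I}}_j u}{u}\ge \beta\nnorm{u}^2$ and choosing $\delta$ small enough that $Ch_j/\beta<1/2$ so that $|\ln(1\pm t)|\le 2|t|$ applies, I obtain~\eqref{eq:HSapprox} with a constant $C_3$ independent of $j$, $\beta$, and $\op{I}^{(j)}$.

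I expect the cleanest but subtlest step to be the middle term $\op{K}(\pi^{\mathrm{I}}_{j-1}-I)u$: without the $H^{-1}$ gain in Lemma~\ref{lemma:projapprox} the natural $L^2$ estimate would only yield $O(\sqrt{h_j})$, losing a factor of $h_j^{1/2}$ and, incidentally, explaining why the conforming construction of~\cite{doi:10.1080/10556788.2013.854356} is only half-order accurate. The fact that $\Omega_j^{\mathrm{I}}\subseteq\Omega_{j-1}^{\mathrm{I}}$ (inclusion~\eqref{eq:setinclusion}) is what enables the duality argument in Lemma~\ref{lemma:projapprox}, and this is the structural feature that propagates all the way to the sharp $h_j/\beta$ bound here; notably, the numerical-boundary term $\mu(\partial_{\mathrm{n}}\Omega_j^{\mathrm{I}})$ does \emph{not} appear in this proposition and will only enter later when passing from $\op{S}_j$ to $\op{M}_j^{-1}$.
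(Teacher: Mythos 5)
Your proposal is correct and follows essentially the same route as the paper's proof: both reduce the estimate to bounding $\bigl|\nnorm{\op{K}_j u}^2-\nnorm{\op{K}_{j-1}\pi^{\mathrm{I}}_{j-1}u}^2\bigr|$, handle the consistency terms with the SAC approximation property and the projection term with \eqref{eq:h1l2est} combined with Lemma~\ref{lemma:projapprox}, and finish by dividing by $\innprd{\op{H}^{\mathrm{I}}_j u}{u}\ge\beta\nnorm{u}^2$ and applying a logarithm estimate. The only cosmetic differences are that the paper splits the difference into three differences of squares $A_1+A_2+A_3$ rather than telescoping inside the norm, and phrases the final step via the operator norm of $I-(\op{H}^{\mathrm{I}}_j)^{-1/2}\op{S}_j(\op{H}^{\mathrm{I}}_j)^{-1/2}$ rather than the quadratic-form ratio directly.
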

\begin{proof}
As in Lemma~\ref{lemma:projapprox}, functions are extended with zero outside their support when necessary.
We have for $u\in \op{U}_j^{\mathrm{I}}$
\beqs
\lefteqn{\innprd{(\op{H}^{\mathrm{I}}_{j}-\op{S}_j)u}{u}}\\
&=&\innprd{\pi^{\mathrm{I}}_j \left(\op{K}_j^*\op{K}_j-
\pi^{\mathrm{I}}_{j-1}\op{K}_{j-1}^*\op{K}_{j-1}\pi^{\mathrm{I}}_{j-1}
\right)u}{u} 
= \innprd{\op{K}_ju}{\op{K}_ju}-
\innprd{\op{K}_{j-1}\pi^{\mathrm{I}}_{j-1}u}{ \op{K}_{j-1} \pi^{\mathrm{I}}_{j-1}u}\\
& =&
\underbrace{\nnorm{\op{K}_ju}^2 - \nnorm{\op{K} u}^2}_{A_1} +\underbrace{\nnorm{\op{K} u}^2 -\nnorm{\op{K} \pi^{\mathrm{I}}_{j-1} u}^2}_{A_2}
+ \underbrace{\nnorm{\op{K} \pi^{\mathrm{I}}_{j-1} u}^2 - \nnorm{\op{K}_{j-1} \pi^{\mathrm{I}}_{j-1} u}^2}_{A_3}\ .
\eeqs
Condition~\ref{cond:condsmooth} implies that
\beqs
\Abs{A_1}=\Abs{\nnorm{\op{K}_ju}^2 - \nnorm{\op{K} u}^2} & \le & \nnorm{(\op{K}_j-\op{K})u}\cdot \left(\nnorm{\op{K}_ju} + \nnorm{\op{K} u}\right)
\le 2 C_1^2 h_j  \nnorm{u}\ ,
\eeqs
and a similar estimate holds for the term $A_3$ with a constant depending on $C_1$ and $f_{low}$.
For the second term $A_2$ we have
\beqs
\Abs{A_2}&=&\Abs{\nnorm{\op{K} u}^2 -\nnorm{\op{K} \pi^{\mathrm{I}}_{j-1} u}^2} \le 
\nnorm{\op{K}(I-\pi^{\mathrm{I}}_{j-1}) u} \cdot \left(\nnorm{\op{K} u} +\nnorm{\op{K} \pi^{\mathrm{I}}_{j-1} u}\right)\ \\
&\stackrel{\eqref{eq:h1l2est},\eqref{eq:projapprox}}{\le}& 2 C_1^2 C_2\nnorm{u}\ .
\eeqs
The symmetry of $(\op{H}^{\mathrm{I}}_{j}-\op{S}_j)$ implies that
\beq
\nnorm{\op{H}^{\mathrm{I}}_{j}-\op{S}_j}\le C' h_j\ ,
\eeq
with $C'$ depending on $C_1, C_2, f_{low}$, but not on  $h_j$ or the inactive set $\op{I}^{(j)}$.
The rest of the argument follows closely the proof of Theorem~4.9 in~\cite{Dra:Pet:ipm}, and we provide it for completeness.
Since $\op{H}^{\mathrm{I}}_{j}$ is symmetric and $\innprd{\op{H}^{\mathrm{I}}_{j} u}{u}\ge \beta \nnorm{u}^2, \  \forall u\in \op{U}_j^{\mathrm{I}}$,
it follows that 
$$\sigma\left((\op{H}^{\mathrm{I}}_{j})^{-\frac{1}{2}}\right)\subseteq (0,\beta^{-\frac{1}{2}}]\ ,
\ \ \mathrm{therefore}\ \ \nnorm{(\op{H}^{\mathrm{I}}_{j})^{-\frac{1}{2}}}\le \beta^{-\frac{1}{2}}\ .$$
Hence
\beqs
\nnorm{I-(\op{H}^{\mathrm{I}}_{j})^{-\frac{1}{2}}\op{S}_j (\op{H}^{\mathrm{I}}_{j})^{-\frac{1}{2}}} \le \beta^{-1} \nnorm{\op{H}^{\mathrm{I}}_{j}-\op{S}_j} \le C'\frac{h_j}{\beta}\ .
\eeqs
If $C'h_j/\beta<1/2$, then 
$$W\left((\op{H}^{\mathrm{I}}_{j})^{-\frac{1}{2}}\op{S}_j (\op{H}^{\mathrm{I}}_{j})^{-\frac{1}{2}}\right) = 
\left\{\frac{\innprd{\op{S}_j u}{ u}}{\innprd{\op{H}^{\mathrm{I}}_{j}u}{u}}\ :\ u\in \op{U}_j^{\mathrm{I}}\right\}
\subseteq\left[\frac{1}{2},\frac{3}{2}\right]\ ,$$
where $W(\op{A})$ represent the numerical range of the operator $\op{A}$. 
By Lemma~3.2 in~\cite{MR2429872}
\beqs
\sup_{u\in \op{U}_j^{\mathrm{I}}}
\Abs{\ln\frac{\innprd{\op{S}_j u}{u}}{\innprd{\op{H}^{\mathrm{I}}_{j} u}{u}}}
\le \frac{3}{2}\nnorm{I-(\op{H}^{\mathrm{I}}_{j})^{-\frac{1}{2}}\op{S}_j (\op{H}^{\mathrm{I}}_{j})^{-\frac{1}{2}}}\le \frac{3 C'}{2}\frac{h_j}{\beta}\ ,
\eeqs
which proves~\eqref{eq:HSapprox} with $C_3=3 C'/2$.
\end{proof}

Another essential element in the proof of Theorem~\ref{th:optordprec}
is the following additional enriched level-$j$ inactive set and associated space:
\beq
\label{eq:inactbardef}
\widehat{\op{I}}^{(j)}&\eqdef& \{i\in \{1,\dots,N_j\}\ :\ \mathrm{supp}(\varphi^{(j)})\subseteq \Omega_{j-1}^{\mathrm{I}}\},\\
\label{eq:spacebardef}
\widehat{\op{U}}_j^{\mathrm{I}}&\eqdef& \mathrm{span}\{\varphi^{(j)}\in \op{U}_j\ : i\in \widehat{\op{I}}^{(j)}\}\ .
\eeq
It is obvious that $\widehat{\op{U}}_j^{\mathrm{I}}$ includes  both 
$\op{U}_{j-1}^{\mathrm{I}}$ and ${\op{U}_j^{\mathrm{I}}}$; it could also be regarded as the level-$j$ inactive space whose inactive domain
is identical to $\Omega_{j-1}^{\mathrm{I}}$. We should also point out that the coarse inactive index set generated
by $\widehat{\op{I}}^{(j)}$ is still ${\op{I}}^{(j-1)}$, therefore the coarse inactive space associated with $\widehat{\op{U}}_j^{\mathrm{I}}$
is identical to that associated with ${\op{U}}_j^{\mathrm{I}}$, namely $\op{U}_{j-1}^{\mathrm{I}}$. Let 
$\widehat{\pi}_j^{\mathrm{I}}$ be the $L^2$-projection onto $\widehat{\op{U}}_j^{\mathrm{I}}$ and 
$\widehat{\op{E}}_j^{\mathrm{I}}: \widehat{\op{U}}_j^{\mathrm{I}}\to \op{U}_j$ be the extension operator.
We now define the inactive Hessian and two-grid preconditioners associated with~$\widehat{\op{I}}^{(j)}$, all of which are to be regarded as operators in
$\mathfrak{L}(\widehat{\op{U}}_j^{\mathrm{I}})$:
\beq
\label{eq:barHdef}
\widehat{\op{H}}^{\mathrm{I}}_j& \eqdef & \widehat{\pi}^{\mathrm{I}}_j (\op{K}_j^*\op{K}_j+\beta I)\widehat{\op{E}}_j^{\mathrm{I}},\\
\label{eq:barSdef}
\widehat{\op{S}}_j &\ \eqdef\  &\op{H}^{\mathrm{I}}_{j-1}\pi^{\mathrm{I}}_{j-1}+\beta(I- \pi^{\mathrm{I}}_{j-1}),\\
\label{eq:barMdef}
\widehat{\op{M}}_j &\ \eqdef\  &(\op{H}^{\mathrm{I}}_{j-1})^{-1}\pi^{\mathrm{I}}_{j-1}+\beta^{-1}(I- \pi^{\mathrm{I}}_{j-1}).
\eeq
Also, let 
$\eta_j^{\mathrm{I}}: {\op{U}}_j^{\mathrm{I}}\to\widehat{\op{U}}_j^{\mathrm{I}}$ be the 
extension operator.

\begin{lemma}
\label{lma:hinv-htildeinv-comp}
There exists constants $\delta>0$ and  $C_4>0$ independent of $j$ and the inactive set $\widehat{\op{I}}^{(j)}$
so that if $\beta^{-1}\mu(\partial_{\mathrm{n}}\Omega_j^{\mathrm{I}})<\delta$ then
\beq
\label{eq::hinv-htildeinv-comp}
d_{\op{U}_j^{\mathrm{I}}}\left((\op{H}^{\mathrm{I}}_j)^{-1},{\pi}^{\mathrm{I}}_j(\widehat{\op{H}}^{\mathrm{I}}_j)^{-1}\eta_j^{\mathrm{I}}\right) 
\le C_4 \frac{\mu(\partial_{\mathrm{n}}\Omega_j^{\mathrm{I}})}{\beta}\ .
\eeq
\end{lemma}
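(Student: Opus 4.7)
The plan is to exploit the nesting $\op{U}_j^{\mathrm{I}}\subseteq\widehat{\op{U}}_j^{\mathrm{I}}$ and convert the spectral-distance estimate into a Galerkin best-approximation problem in the energy bilinear form $a_j(u,v) \eqdef \innprd{\op{K}_j u}{\op{K}_j v} + \beta\innprd{u}{v}$. For fixed $u\in\op{U}_j^{\mathrm{I}}$, set $v^* \eqdef (\op{H}^{\mathrm{I}}_j)^{-1}u\in\op{U}_j^{\mathrm{I}}$ and $\widehat{w} \eqdef (\widehat{\op{H}}^{\mathrm{I}}_j)^{-1}\eta_j^{\mathrm{I}}u\in\widehat{\op{U}}_j^{\mathrm{I}}$; since $\pi^{\mathrm{I}}_j$ and $\eta_j^{\mathrm{I}}$ are $L^2$-adjoints on $\op{U}_j^{\mathrm{I}}$, one has $Q\eqdef\innprd{(\op{H}^{\mathrm{I}}_j)^{-1}u}{u}=a_j(v^*,v^*)$ and $P\eqdef\innprd{\pi^{\mathrm{I}}_j(\widehat{\op{H}}^{\mathrm{I}}_j)^{-1}\eta_j^{\mathrm{I}}u}{u}=a_j(\widehat{w},\widehat{w})$. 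Both $v^*$ and $\widehat{w}$ solve $a_j(\cdot,\phi)=\innprd{u}{\phi}$ on their respective spaces, and because $\op{U}_j^{\mathrm{I}}\subseteq\widehat{\op{U}}_j^{\mathrm{I}}$, Galerkin orthogonality gives $a_j(\widehat{w}-v^*,\phi)=0$ for every $\phi\in\op{U}_j^{\mathrm{I}}$. Pythagoras in the $a_j$-inner product then yields $P = Q + a_j(\widehat{w}-v^*,\widehat{w}-v^*)$, so $Q\le P$ and the error equals the squared $a_j$-distance from $\widehat{w}$ to $\op{U}_j^{\mathrm{I}}$.

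I would decompose $\widehat{w} = w_1 + w_2$ with $w_1\in\op{U}_j^{\mathrm{I}}$ and $w_2$ in the span of the level-$j$ basis functions supported on $\partial_{\mathrm{n}}\Omega_j^{\mathrm{I}}$ (together spanning $\widehat{\op{U}}_j^{\mathrm{I}}$). Since $v^*$ is the $a_j$-best approximation of $\widehat{w}$ in $\op{U}_j^{\mathrm{I}}$, the choice $\phi = w_1$ gives
\[
P - Q \le a_j(\widehat{w}-w_1,\widehat{w}-w_1) = a_j(w_2,w_2) = \nnorm{\op{K}_j w_2}^2 + \beta\nnorm{w_2}^2.
\]
Testing $\widehat{\op{H}}^{\mathrm{I}}_j\widehat{w} = \eta_j^{\mathrm{I}}u$ against $w_2$, and noting $\innprd{u}{w_2}=0$ by disjoint supports, produces both the structural identity $\nnorm{\op{K}_j w_2}^2 + \beta\nnorm{w_2}^2 = -\innprd{\op{K}_j w_1}{\op{K}_j w_2}$ and, equivalently via Schur-complement algebra, $P - Q = -\innprd{\op{K}_j v^*}{\op{K}_j w_2}$; these give two parallel handles on the error.

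The core technical step, and the main obstacle, is to show $a_j(w_2,w_2)\le C\,\mu(\partial_{\mathrm{n}}\Omega_j^{\mathrm{I}})\nnorm{u}^2/\beta$. The strategy is $L^{\infty}$--$L^{1}$ duality on the right-hand side $|\innprd{\op{K}_j w_1}{\op{K}_j w_2}|$: Condition~\ref{cond:condsmooth}[c] yields $\nnorm{\op{K}_j w_1}_{L^{\infty}}\le C_1\nnorm{w_1}$; the support property $\mathrm{supp}(w_2)\subseteq\partial_{\mathrm{n}}\Omega_j^{\mathrm{I}}$ gives $\nnorm{w_2}_{L^{1}}\le\sqrt{\mu(\partial_{\mathrm{n}}\Omega_j^{\mathrm{I}})}\,\nnorm{w_2}$; and the dual $L^1\to L^1$ boundedness of $\op{K}_j$ (which holds for the integral and PDE-solution operators targeted in this paper) similarly gives $\nnorm{\op{K}_j w_2}_{L^{1}}\le C\sqrt{\mu(\partial_{\mathrm{n}}\Omega_j^{\mathrm{I}})}\,\nnorm{w_2}$. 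Combining these through Young's inequality (to absorb $\beta\nnorm{w_2}^2$ on the left) with the coercivity bound $\nnorm{w_1}\le\nnorm{\widehat{w}}\le\nnorm{u}/\beta$ yields a bound linear in $\mu(\partial_{\mathrm{n}}\Omega_j^{\mathrm{I}})$; the delicate piece is the $\beta$-bookkeeping, which requires using the sharper pointwise identity $\beta w_2 = -\widehat{\pi}^{\mathrm{I}}_j(\op{K}_j^*\op{K}_j\widehat{w})$ restricted to the $\partial_{\mathrm{n}}$-component to obtain $\nnorm{w_2}\lesssim\sqrt{\mu(\partial_{\mathrm{n}}\Omega_j^{\mathrm{I}})}/\beta$ rather than the naive $\nnorm{u}/\beta$.

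With $P - Q \le C\mu(\partial_{\mathrm{n}}\Omega_j^{\mathrm{I}})\nnorm{u}^2/\beta$ in hand, the uniform lower bound $P \ge Q \ge \nnorm{u}^2/(C_1^2+\beta)$ gives $(P-Q)/P\le C'\mu(\partial_{\mathrm{n}}\Omega_j^{\mathrm{I}})/\beta$, which under the smallness hypothesis $\beta^{-1}\mu(\partial_{\mathrm{n}}\Omega_j^{\mathrm{I}})<\delta$ stays bounded away from $1$. Invoking Lemma~3.2 of~\cite{MR2429872} exactly as in the proof of Proposition~\ref{lma:HSapprox} then converts this ratio bound into the claimed spectral-distance estimate~\eqref{eq::hinv-htildeinv-comp}.
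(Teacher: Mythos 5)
Your structural setup is sound, and it is in fact the variational face of the paper's own argument: the paper splits $\widehat{\op{U}}_j^{\mathrm{I}}=\op{U}_j^{\mathrm{I}}\oplus\op{U}_{j,c}^{\mathrm{I}}$, forms the Schur complement $\op{G}=\op{H}^{\mathrm{I}}_{cc}-\op{H}^{\mathrm{I}}_{co}(\op{H}^{\mathrm{I}})^{-1}\op{H}^{\mathrm{I}}_{oc}$, and obtains by block elimination exactly your identity $P-Q=\innprd{\op{G}^{-1}\op{H}^{\mathrm{I}}_{co}v^*}{\op{H}^{\mathrm{I}}_{co}v^*}=a_j(\widehat w-v^*,\widehat w-v^*)\ge 0$. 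Your key localization estimate ($\op{K}_j$ applied to a function supported on $\partial_{\mathrm{n}}\Omega_j^{\mathrm{I}}$ gains a factor $\sqrt{\mu(\partial_{\mathrm{n}}\Omega_j^{\mathrm{I}})}$) is also the paper's, although the paper derives it from Condition~\ref{cond:condsmooth}[c] rather than from an $L^1\to L^1$ bound on $\op{K}_j$, which is not among the stated hypotheses and should not be invoked for a lemma claimed to hold under SAC alone.

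The genuine gap is the $\beta$-bookkeeping, which you correctly identify as the main obstacle but do not resolve; the fix you sketch does not close it. Writing $\mu=\mu(\partial_{\mathrm{n}}\Omega_j^{\mathrm{I}})$, you aim for $P-Q\le C\mu\nnorm{u}^2/\beta$ and then divide by the crude bound $P\ge\nnorm{u}^2/(C_1^2+\beta)$. But every chain you propose routes through $\nnorm{w_1}\le\nnorm{\widehat w}\le\beta^{-1}\nnorm{u}$, and the sharpest bound your identity $\beta w_2=-\pi^{\mathrm{I}}_{j,c}(\op{K}_j^*\op{K}_j\widehat w)$ can deliver is $\beta\nnorm{w_2}\le C\sqrt{\mu}\,\nnorm{\op{K}_j\widehat w}\le C\sqrt{\mu}\,\nnorm{u}/\sqrt{\beta}$, i.e.\ $\nnorm{w_2}\le C\sqrt{\mu}\,\nnorm{u}/\beta^{3/2}$ (your stated ``$\nnorm{w_2}\lesssim\sqrt{\mu}/\beta$'' is not justified and is dimensionally incomplete). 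Feeding these into $P-Q\le C\nnorm{w_1}\sqrt{\mu}\,\nnorm{w_2}$ gives $P-Q\lesssim\mu\nnorm{u}^2/\beta^{5/2}$, and the Young's-inequality variant gives $\mu\nnorm{u}^2/\beta^{3}$; either way you land at $(P-Q)/P\lesssim\mu/\beta^{2}$ or worse, at least one power of $\beta$ short of~\eqref{eq::hinv-htildeinv-comp}. The paper's decisive move is to normalize by $Q$ itself rather than by $\nnorm{u}^2$: it bounds $P-Q\le\nnorm{\op{G}^{-1}}\cdot\nnorm{(\op{H}^{\mathrm{I}}_j)^{-1/2}\op{H}^{\mathrm{I}}_{j,oc}}^2\,Q$, with $\nnorm{\op{G}^{-1}}\le\beta^{-1}$ from coercivity of the Schur complement and $\nnorm{(\op{H}^{\mathrm{I}}_j)^{-1/2}\op{H}^{\mathrm{I}}_{j,oc}u_c}^2\le\innprd{(\op{H}^{\mathrm{I}}_{j,cc}-\beta I)u_c}{u_c}=\nnorm{\op{K}_j\op{E}^{\mathrm{I}}_{j,c}u_c}^2\le C_1^2\mu\nnorm{u_c}^2$, so that $(P-Q)/Q\le C_1^2\mu/\beta$ with no loss. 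In your language the repair is to use your second handle $P-Q=-\innprd{\op{K}_jv^*}{\op{K}_jw_2}$ together with $\nnorm{\op{K}_jv^*}^2\le a_j(v^*,v^*)=Q$ and $\nnorm{w_2}\le\nnorm{\widehat w-v^*}\le\beta^{-1/2}(P-Q)^{1/2}$, which gives $P-Q\le C\sqrt{\mu}\,Q^{1/2}\beta^{-1/2}(P-Q)^{1/2}$ and hence $(P-Q)/Q\le C^2\mu/\beta$; without this reweighting the argument does not prove the lemma.
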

\begin{proof}
The first task is to find a practical expression for the operator ${\pi}_j^{\mathrm{I}}(\widehat{\op{H}}^{\mathrm{I}}_j)^{-1}\eta_j^{\mathrm{I}}$.
Let ${\op{U}}^{\mathrm{I}}_{j,c}$ be the orthogonal complement of ${\op{U}}_j^{\mathrm{I}}$ in $\widehat{\op{U}}_j^{\mathrm{I}}$, so that
$\widehat{\op{U}}_j^{\mathrm{I}} = {\op{U}}_j^{\mathrm{I}}\oplus {\op{U}}^{\mathrm{I}}_{j,c}$; note that functions in ${\op{U}}^{\mathrm{I}}_{j,c}$
are supported in $\partial_{\mathrm{n}}\Omega_j^{\mathrm{I}}$. Furthermore, let ${\pi}^{\mathrm{I}}_{j,c}$ be the
orthogonal projector on ${\op{U}}^{\mathrm{I}}_{j,c}$ be the projector 
and $\eta^{\mathrm{I}}_{j,c}:{\op{U}}^{\mathrm{I}}_{j,c}\to \widehat{\op{U}}_j^{\mathrm{I}}$ be the extension operator.
Following the block-splitting of the matrix representing $\widehat{\op{H}}_j^{\mathrm{I}}$, we define the operators
\beqs
\op{H}^{\mathrm{I}}_{j,oc}&=&\pi_j^{\mathrm{I}}\widehat{\op{H}}_j^{\mathrm{I}}\eta^{\mathrm{I}}_{j,c}\in\mathfrak{L}({\op{U}}^{\mathrm{I}}_{j,c},{\op{U}}_j^{\mathrm{I}}), 
\ \op{H}^{\mathrm{I}}_{j, co}=\pi^{\mathrm{I}}_{j,c}\widehat{\op{H}}_j^{\mathrm{I}}\eta_j^{\mathrm{I}}\in\mathfrak{L}({\op{U}}^{\mathrm{I}}_j,{\op{U}}^{\mathrm{I}}_{j,c})\\
\op{H}^{\mathrm{I}}_{j,cc}&=&\pi^{\mathrm{I}}_{j,c}\widehat{\op{H}}_j^{\mathrm{I}}\eta^{\mathrm{I}}_{j,c}\in\mathfrak{L}({\op{U}}^{\mathrm{I}}_{j,c},{\op{U}}^{\mathrm{I}}_{j,c}).
\eeqs
Naturally, $\op{H}_j^{\mathrm{I}}=\pi_j^{\mathrm{I}}\widehat{\op{H}}^{\mathrm{I}}\eta_j^{\mathrm{I}}$, and ${\op{H}}_{j,oc}^{\mathrm{I}}=({\op{H}}_{j,co}^{\mathrm{I}})^*$.
To ease notation, in the first part of this analysis we eliminate the sub- or super-scripts ``$j$'', since the level does not vary, 
so $\widehat{\op{I}} = \widehat{\op{I}}^{(j)}$, $\widehat{\op{U}}^{\mathrm{I}} = \widehat{\op{U}}^{\mathrm{I}}_{j}$, 
${\op{H}}_{co}^{\mathrm{I}}={\op{H}}_{j,co}^{\mathrm{I}}$, etc. 
Accordingly, if $\widehat{u}=u+u_c$ with $u\in {\op{U}}^{\mathrm{I}}, u_c\in {\op{U}}^{\mathrm{I}}_c$, we have
\beq
\label{eq:hhataction}
\widehat{\op{H}}^{\mathrm{I}} \widehat{u}=\underbrace{({\op{H}}^{\mathrm{I}}u + {\op{H}}_{oc}^{\mathrm{I}}u_c)}_{\mathrm{in}\  {\op{U}}^{\mathrm{I}}} + 
\underbrace{({\op{H}}_{co}^{\mathrm{I}}u+ {\op{H}}^{\mathrm{I}}_{cc}u_c)}_{\mathrm{in}\  {\op{U}}^{\mathrm{I}}_c}\ .
\eeq
We also define the Schur-complement of ${\op{H}}^{\mathrm{I}}$ in  $\widehat{\op{H}}^{\mathrm{I}}$
$$
\op{G}={\op{H}}^{\mathrm{I}}_{cc}-{\op{H}}_{co}^{\mathrm{I}}(\op{H}^{\mathrm{I}})^{-1}{\op{H}}_{oc}^{\mathrm{I}}\ \in 
\mathfrak{L}({\op{U}}^{\mathrm{I}}_c,{\op{U}}^{\mathrm{I}}_c).
$$
Note that $\op{G}$ is symmetric. For $u_c\in {\op{U}}^{\mathrm{I}}_c$ define $u=-(\op{H}^{\mathrm{I}})^{-1}{\op{H}}_{oc}^{\mathrm{I}}u_c\in {\op{U}}^{\mathrm{I}}$
and $\widehat{u}= u + u_c$. A simple calculation shows that
\beq
\label{eq:coercG}
\innprd{\op{G}u_c}{u_c}& \stackrel{\eqref{eq:hhataction}}{=}  & \innprd{\widehat{\op{H}}^{\mathrm{I}} \widehat{u}}{\widehat{u}} \ge \beta\innprd{\widehat{u}}{\widehat{u}}=
\beta \left(\nnorm{u}^2+\nnorm{u_c}^2 \right) \ge \beta \nnorm{u_c}^2.
\eeq
(This is simply saying that the smallest eigenvalue of the Schur-complement is greater than the smallest eigenvalue of the
original operator). Hence, it follows that
\beq
\label{eq:Gnorm}
\nnorm{\op{G}^{-1}}\le \beta^{-1}\ .
\eeq
When solving $\widehat{\op{H}}^{\mathrm{I}} \widehat{u} = y$ for $y\in {\op{U}}^{\mathrm{I}}$,
standard block-elimination yields $\widehat{u}=u+u_c$ with 
$$
{u}=(\op{H}^{\mathrm{I}})^{-1}\left(I + {\op{H}}_{oc}^{\mathrm{I}}\op{G}^{-1}{\op{H}}_{co}^{\mathrm{I}} (\op{H}^{\mathrm{I}})^{-1}\right) y,\ \ \ 
u_c=-\op{G}^{-1}\op{H}_{co}^{\mathrm{I}} (\op{H}^{\mathrm{I}})^{-1}y\ .
$$
The first equation above shows that 
\beq
\label{eq:extHinv}
{\pi}^{\mathrm{I}}(\widehat{\op{H}}^{\mathrm{I}})^{-1}\eta^{\mathrm{I}} = 
(\op{H}^{\mathrm{I}})^{-1}\left(I + {\op{H}}_{oc}^{\mathrm{I}}\op{G}^{-1}{\op{H}}_{co}^{\mathrm{I}} (\op{H}^{\mathrm{I}})^{-1}\right).
\eeq

To estimate the spectral distance between ${\pi}^{\mathrm{I}}(\widehat{\op{H}}^{\mathrm{I}})^{-1}\eta^{\mathrm{I}}$ and $({\op{H}}^{\mathrm{I}})^{-1}$ we bound
\beq
\nonumber
\lefteqn{\sup_{u\in {\op{U}}^{\mathrm{I}}}\Abs{1-\frac{\innprd{{\pi}^{\mathrm{I}}(\widehat{\op{H}}^{\mathrm{I}})^{-1}\eta^{\mathrm{I}}u}{u}}{\innprd{({\op{H}}^{\mathrm{I}})^{-1}u}{u}}}}\\
\nonumber
&\stackrel{\eqref{eq:extHinv}}{=}&
\sup_{u\in {\op{U}}^{\mathrm{I}}\setminus\{0\}}\Abs{\frac{\innprd{(\op{H}^{\mathrm{I}})^{-1}{\op{H}}_{oc}^{\mathrm{I}}\op{G}^{-1}{\op{H}}_{co}^{\mathrm{I}} (\op{H}^{\mathrm{I}})^{-1} u}
{u}}{\innprd{({\op{H}}^{\mathrm{I}})^{-1}u}{u}}}\\
\nonumber
 &\stackrel{v=(\op{H}^{\mathrm{I}})^{-\frac{1}{2}}u}{=} &
\sup_{v\in {\op{U}}^{\mathrm{I}}\setminus\{0\}}\Abs{\frac{\innprd{(\op{H}^{\mathrm{I}})^{-\frac{1}{2}}{\op{H}}_{oc}^{\mathrm{I}}\op{G}^{-1}{\op{H}}_{co}^{\mathrm{I}} (\op{H}^{\mathrm{I}})^{-\frac{1}{2}} v}{v}}{\innprd{v}{v}}} = \nnorm{(\op{H}^{\mathrm{I}})^{-\frac{1}{2}}{\op{H}}_{oc}^{\mathrm{I}}\op{G}^{-1}{\op{H}}_{co}^{\mathrm{I}} (\op{H}^{\mathrm{I}})^{-\frac{1}{2}}}\\
\label{eq:firstest}
&\le & \nnorm{(\op{H}^{\mathrm{I}})^{-\frac{1}{2}}{\op{H}}_{oc}^{\mathrm{I}}}^2 \cdot \nnorm{\op{G}^{-1}}\le
\beta^{-1}\nnorm{(\op{H}^{\mathrm{I}})^{-\frac{1}{2}}{\op{H}}_{oc}^{\mathrm{I}}}^2 \ ,
\eeq
since $\nnorm{(\op{H}^{\mathrm{I}})^{-\frac{1}{2}}{\op{H}}_{oc}^{\mathrm{I}}} = \nnorm{{\op{H}}_{co}^{\mathrm{I}}(\op{H}^{\mathrm{I}})^{-\frac{1}{2}}}$ 
as they are dual to each other.

We resume using the index $j$ as we estimate $\nnorm{(\op{H}_j^{\mathrm{I}})^{-\frac{1}{2}}{\op{H}}_{j, oc}^{\mathrm{I}}}$.
Following~\eqref{eq:coercG}, for all $u_c\in {\op{U}}^{\mathrm{I}}_{j,c}$
\beqs
\nnorm{(\op{H}_j^{\mathrm{I}})^{-\frac{1}{2}}{\op{H}}_{j,oc}^{\mathrm{I}}u_c}^2
&=&\innprd{{\op{H}}_{j,co}^{\mathrm{I}}(\op{H}_j^{\mathrm{I}})^{-1}{\op{H}}_{j,oc}^{\mathrm{I}} u_c}{u_c} \stackrel{\eqref{eq:coercG}}{\le} 
\innprd{({\op{H}}^{\mathrm{I}}_{j,cc}-\beta I)u_c}{u_c}\\
&\le& \innprd{\pi^{\mathrm{I}}_{j,c}(\widehat{\op{H}}_j^{\mathrm{I}}-\beta I)\eta^{\mathrm{I}}_{j,c}u_c}{u_c}.
\eeqs
If we define by ${\op{E}}_{j,c}^{\mathrm{I}}\in\mathfrak{L}({\op{U}}^{\mathrm{I}}_{j,c},{\op{U}}_{j})$ the extension-with-zero operator, then
\beqs
\pi^{\mathrm{I}}_{j,c}(\widehat{\op{H}}_j^{\mathrm{I}}-\beta I)\eta^{\mathrm{I}}_{j,c} = 
\pi^{\mathrm{I}}_{j,c}(\op{K}_j^*\op{K}_j){\op{E}}_{j,c}^{\mathrm{I}}.
\eeqs
Therefore,
\beqs
\nnorm{(\op{H}_j^{\mathrm{I}})^{-\frac{1}{2}}{\op{H}}_{j,oc}^{\mathrm{I}}u_c}^2
& \le & \innprd{\pi^{\mathrm{I}}_{j,c}(\op{K}_j^*\op{K}_j){\op{E}}_{j,c}^{\mathrm{I}}u_c}{u_c} = \nnorm{\op{K}_j{\op{E}}_{j,c}^{\mathrm{I}}u_c}^2\\
& \le & \nnorm{\op{K}_j{\op{E}}_{j,c}^{\mathrm{I}}u_c}_{L^{\infty}(\partial_{\mathrm{n}}\Omega_j^{\mathrm{I}})}^2 \: \mu(\partial_{\mathrm{n}}\Omega_j^{\mathrm{I}})\\
&\stackrel{\eqref{cond:l2linf}}{\le}  &
C_1^2\nnorm{{\op{E}}_{j,c}^{\mathrm{I}}u_c}_{L^{2}(\Omega)}^2 \: \mu(\partial_{\mathrm{n}}\Omega_j^{\mathrm{I}})
=C_1^2\nnorm{u_c}^2 \: \mu(\partial_{\mathrm{n}}\Omega_j^{\mathrm{I}})\ .
\eeqs
It follows that
\beq
\label{eq:factest}
\nnorm{(\op{H}_j^{\mathrm{I}})^{-\frac{1}{2}}{\op{H}}_{j,oc}^{\mathrm{I}}}\le C_1\sqrt{\mu(\partial_{\mathrm{n}}\Omega_j^{\mathrm{I}})}\ .
\eeq
So, by~\eqref{eq:firstest} and~\eqref{eq:factest} we get
\beqs
\sup_{u\in {\op{U}}_j^{\mathrm{I}}}
\Abs{1-\frac{\innprd{{\pi}_j^{\mathrm{I}}(\widehat{\op{H}}_j^{\mathrm{I}})^{-1}\eta_j^{\mathrm{I}}u}{u}}{\innprd{({\op{H}}_j^{\mathrm{I}})^{-1}u}{u}}}
\le C_1^2\beta^{-1}\mu(\partial_{\mathrm{n}}\Omega_j^{\mathrm{I}})\ .
\eeqs
If $C_1^2\beta^{-1}\mu(\partial_{\mathrm{n}}\Omega_j^{\mathrm{I}}) < \frac{1}{2}$, by Lemma~3.2 in~\cite{MR2429872}
\beqs
\Abs{\ln\frac{\innprd{{\pi}_j^{\mathrm{I}}(\widehat{\op{H}}_j^{\mathrm{I}})^{-1}\eta_j^{\mathrm{I}}u}{u}}{\innprd{({\op{H}}_j^{\mathrm{I}})^{-1}u}{u}}}
\le \frac{3}{2}C_1^2\beta^{-1}\mu(\partial_{\mathrm{n}}\Omega_j^{\mathrm{I}})\ ,
\eeqs
which concludes the proof.
\end{proof}

We now return to the proof of Theorem~\ref{th:optordprec}.
\begin{proof}
We refer to the space $\widehat{\op{U}}_j^{\mathrm{I}}$ defined in~\eqref{eq:spacebardef} and the associated operators
$\widehat{\op{H}}^{\mathrm{I}}_j,  \widehat{\op{S}}_j, \widehat{\op{M}}_j$ defined in~\eqref{eq:barHdef}-\eqref{eq:barMdef}.
Cf. Remark~\ref{rem:symmetry}, because $\op{U}_{j-1}^{\mathrm{I}} \subseteq \widehat{\op{U}}_j^{\mathrm{I}}$, we have
$(\widehat{\op{S}}_j)^{-1} = \widehat{\op{M}}_j\ .$ By Lemma~3.10 in~\cite{MR2429872} we have
\beq
\label{eq:specdistinv}
d_{\widehat{\op{U}}_j^{\mathrm{I}}}\left(\widehat{\op{M}}_j, (\widehat{\op{H}}^{\mathrm{I}}_j)^{-1}\right)=
d_{\widehat{\op{U}}_j^{\mathrm{I}}}\left(\widehat{\op{S}}_j, \widehat{\op{H}}^{\mathrm{I}}_j\right)\ .
\eeq
Hence,
\beqs
\lefteqn{d_{{\op{U}}_j^{\mathrm{I}}}\left({\op{M}}_j, ({\op{H}}^{\mathrm{I}}_j)^{-1}\right) }\\
&=&\sup_{u\in {\op{U}}_j^{\mathrm{I}}\setminus \{0\}} 
\Abs{\ln\frac{\innprd{{\pi}^{\mathrm{I}}_j\left(\left(\op{H}^{\mathrm{I}}_{j-1}\right)^{-1}\pi^{\mathrm{I}}_{j-1}+\beta^{-1}(I- \pi^{\mathrm{I}}_{j-1})\right) u}{u}}{\innprd{({\op{H}}^{\mathrm{I}}_j)^{-1} u}{u}}}\\
 & \stackrel{\op{U}_{j}^{\mathrm{I}} \subseteq \widehat{\op{U}}_j^{\mathrm{I}}}{=} & 
\sup_{u\in {\op{U}}_j^{\mathrm{I}}\setminus \{0\}} 
\Abs{\ln\frac{\innprd{\widehat{\pi}^{\mathrm{I}}_j\left(\left(\op{H}^{\mathrm{I}}_{j-1}\right)^{-1}\pi^{\mathrm{I}}_{j-1}+\beta^{-1}(I- \pi^{\mathrm{I}}_{j-1})\right) u}{u}}{\innprd{({\op{H}}^{\mathrm{I}}_j)^{-1} u}{u}}}\\
&\le &
\sup_{u\in {\op{U}}_j^{\mathrm{I}}\setminus \{0\}} 
\Abs{\ln\frac{\innprd{\widehat{\op{M}}_j u}{u}}{\innprd{(\widehat{\op{H}}^{\mathrm{I}}_j)^{-1} u}{u}}}+
\sup_{u\in {\op{U}}_j^{\mathrm{I}}\setminus \{0\}} 
\Abs{\ln\frac{\innprd{(\widehat{\op{H}}^{\mathrm{I}}_j)^{-1} u}{u}}{\innprd{({\op{H}}^{\mathrm{I}}_j)^{-1} u}{u}}}\\
&\le & d_{\widehat{\op{U}}_j^{\mathrm{I}}}\left(\widehat{\op{M}}_j, (\widehat{\op{H}}^{\mathrm{I}}_j)^{-1}\right) +
\sup_{u\in {\op{U}}_j^{\mathrm{I}}\setminus \{0\}}
\Abs{\ln\frac{\innprd{(\widehat{\op{H}}^{\mathrm{I}}_j)^{-1} u}{u}}{\innprd{({\op{H}}^{\mathrm{I}}_j)^{-1} u}{u}}}\\
&\stackrel{\eqref{eq:specdistinv}}{=} 
& d_{\widehat{\op{U}}_j^{\mathrm{I}}}\left(\widehat{\op{S}}_j, \widehat{\op{H}}^{\mathrm{I}}_j\right) +
\sup_{u\in {\op{U}}_j^{\mathrm{I}}\setminus \{0\}}
\Abs{\ln\frac{\innprd{{\pi}^{\mathrm{I}}_j(\widehat{\op{H}}^{\mathrm{I}}_j)^{-1} u}{u}}{\innprd{({\op{H}}^{\mathrm{I}}_j)^{-1} u}{u}}}.
\eeqs
By Proposition~\ref{lma:HSapprox}, the first term above is bounded by $C_3 \beta^{-1}h_j$, assuming $\beta^{-1}h_j$ is sufficiently small.
The second term expresses the spectral distance between ${\pi}^{\mathrm{I}}_j(\widehat{\op{H}}^{\mathrm{I}}_j)^{-1}\eta_j^{\mathrm{I}}$
and $({\op{H}}^{\mathrm{I}}_j)^{-1}$, and is bounded by $C_4 \beta^{-1}\mu(\partial_{\mathrm{n}}\Omega_j^{\mathrm{I}})$ provided
$\beta^{-1}\mu(\partial_{\mathrm{n}}\Omega_j^{\mathrm{I}})$ is sufficiently small, cf. Lemma~\ref{lma:hinv-htildeinv-comp}, which concludes the proof.
\end{proof}

\section{The multigrid preconditioner}
\label{sec:multigrid}
The extension of the two-grid preconditioner introduced in Section~\ref{sec:two-grid} to a multigrid preconditioner
follows closely~\cite{doi:10.1080/10556788.2013.854356}. 
However, since the use of non-conforming spaces requires a few changes both in the construction  and the analysis,
 we give here a full description of the extension process.
As in~\cite{doi:10.1080/10556788.2013.854356}, we adopt the following point of view: the level for which we construct a multigrid 
preconditioner is given to be  $j$ and is considered fixed, and we also fix an inactive set $\op{I}^{(j)}$, which corresponds 
to one of the SSNM iterations. This leads 
to the definition of $\op{H}^{\mathrm{I}}_{j}$ as in~\eqref{eq:inacthessdef}.
As with other multigrid methods for integral equations of the second kind, the base level, denoted by $j_0$, 
may not necessarily be the coarsest case available, 
i.e., $j_0=0$, but has to sufficiently fine for the conditions in Theorem~\ref{theo:multigrid} below to be satisfied. 
The goal is to construct  the operator $\op{Z}_{j}$ representing the {\bf multigrid} preconditioner for $\op{H}^{\mathrm{I}}_{j}$, i.e., 
an approximation of $(\op{H}^{\mathrm{I}}_{j})^{-1}$.

\subsection{Construction and complexity}
The first step in building the multigrid preconditioner is to construct the coarse inactive spaces and operators for the levels
$k=j-1,  j-2, \dots, j_0$, in accordance with~\eqref{eq:coarseixdef}. More precisely, after defining
$$\Omega_{j}^{\mathrm{I}} = \bigcup_{i\in \op{I}^{(j)}} \mathrm{supp} (\varphi^{(j)}_i),$$
we construct recursively the coarser inactive index-sets, domains, and spaces.
\begin{algorithm}[Inactive set, inactive domain definition]
\label{alg:inactive_sets} 
\begin{enumerate}
\item[$1.$] {\tt for}\hspace{3pt} $k=(j-1)\::\:-1\::\:j_0$ \hspace{3pt} 
\vspace{6pt}
\item[$2.$] \hspace{20pt} $\op{I}^{(k)} \eqdef \left\{i\in\{1,\dots, N_{k}\}\  :\ \mu(\mathrm{supp}(\varphi_i^{(k)})\cap \Omega_{k+1}^{\mathrm{I}})>0\right\}$
\vspace{6pt}
\item[$3.$] \hspace{20pt} $\Omega_k^{\mathrm{I}} \eqdef \bigcup_{i\in \op{I}^{(k)}} \mathrm{supp} (\varphi^{(k)}_i)$
\vspace{6pt}
\item[$4.$] {\tt end}
\end{enumerate}
\end{algorithm}

With inactive index-sets constructed, we now define, as before, the inactive spaces and operators for $k=j_0, \dots, j$:
\beqs
\op{U}_k^{\mathrm{I}}& \eqdef& \mathrm{span}\{\varphi^{(k)}_i\ :\ i\in \op{I}^{(k)}\}\ ,\\
\op{H}^{\mathrm{I}}_k& \eqdef& \pi^{\mathrm{I}}_k (\op{K}_k^*\op{K}_k+\beta I)\op{E}_k^{\mathrm{I}} \in \mathfrak{L}(\op{U}_k^{\mathrm{I}})\ .
\eeqs
Recall that $\op{U}_k^{\mathrm{I}}\subseteq \op{U}_k$, but we do not expect in general that $\op{U}_k^{\mathrm{I}}\subseteq \op{U}_{k+1}^{\mathrm{I}}$. However,
the inclusion $\Omega_{k+1}^{\mathrm{I}}\subseteq \Omega_{k}^{\mathrm{I}}$ holds for $k=j_0, \dots, j-1$.
We also define for $k=1, 2, \dots$ the operators 
\beq
\label{eq:transopdef}
\mathfrak{I}_{k-1}^k:\mathfrak{L}(\op{U}_{k-1}^{\mathrm{I}}) \to \mathfrak{L}(\op{U}_{k}^{\mathrm{I}}),\ \ 
\mathfrak{I}_{k-1}^k(\op{X}) = \pi_{k}^{\mathrm{I}}\left(\op{X}\cdot \pi_{k-1}^{\mathrm{I}} + \beta^{-1}(I-\pi_{k-1}^{\mathrm{I}})\right).
\eeq
Note that the two-grid preconditioner $\op{M}_j$ can be written as
\beq
\label{eq:sdefop}
\op{M}_j = \mathfrak{I}_{j-1}^j\left((\op{H}^{\mathrm{I}}_{j-1})^{-1}\right)\ .
\eeq
Another essential element in defining the multigrid preconditioner is the 
family of operators $\mathfrak{N}_k$, $k=j_0, \dots, j$,  given by 
$$\mathfrak{N}_k:\mathfrak{L}(\op{U}_{k}^{\mathrm{I}})\to \mathfrak{L}(\op{U}_{k}^{\mathrm{I}}),
\ \ \mathfrak{N}_k(\op{X})\stackrel{\mathrm{def}}{=} 2\op{X}-\op{X}\cdot\op{H}_k^{\mathrm{I}}\cdot\op{X}\ .$$
It is known that $\op{X}_{l+1}=\mathfrak{N}_k(\op{X}_l)$, $l=1,2,\dots$,
represents the Newton iteration for solving the nonlinear operator-equation $\op{X}^{-1}-\op{H}_k^{\mathrm{I}} = 0$ (e.g., see~\cite{MR2429872}).

The following algorithm produces for \mbox{$k=j_0+1,\dots,j$} a sequence of operators $\op{Z}_{k}\in \mathfrak{L}(\op{U}_{k}^{\mathrm{I}})$,
of which $\op{Z}_{j}$ is the desired multigrid preconditioner. 
\begin{algorithm}[Operator-form definition of $\op{Z}_{k}$; input arguments: $j \ge j_0+1$]
\label{alg:multigrid}
\begin{enumerate}
\item[$1.$] $\op{Z}_{j_0} \eqdef (\op{H}^{\mathrm{I}}_{j_0})^{-1}$ \hspace{73pt} {\tt \%} base level
\vspace{6pt}
\item[$2.$] {\tt for} \ $k=j_0+1:j-1$ \hspace{42pt} {\tt \%} intermediate levels (if any)
\vspace{6pt}
\item[$3.$] \hspace{22pt} $\op{Z}_{k} \eqdef\mathfrak{N}_k(\mathfrak{I}_{k-1}^k (\op{Z}_{k-1}))$ \hspace{29pt}
\vspace{6pt}
\item[$4.$] {\tt end}
\vspace{6pt}
\item[$5.$] $\op{Z}_{j} \eqdef \mathfrak{I}_{j-1}^j (\op{Z}_{j-1})$ \hspace{64pt} {\tt \%} finest level
\end{enumerate}
\end{algorithm}
\vspace{5pt}

Algorithm~\ref{alg:multigrid} shows that~$\op{Z}_j$ has a W-cycle structure. Moreover, for $k<j$, applying $\op{Z}_k$
involves one application of~$\op{H}_k^{\mathrm{I}}$. 
To estimate the cost of applying~$\op{Z}_j$ we make some assumptions with respect to the
cost of applying~$\op{H}_k^{\mathrm{I}}$  and the cost of inverting $\op{H}^{\mathrm{I}}_{j_0}$ at step 1 using 
\emph{unpreconditioned conjugate gradient} (CG).
Recall that $N_k=\mathrm{dim}(\op{U}_k)$, and assume that there exists $\alpha\in (0,1)$ so that $N_{k-1}\le \alpha N_k$, $k=1, 2, \dots$; 
we expect $\alpha \approx 2^{-n}$, where $n$ is the dimension of the ambient space. We also assume that the cost of applying
the Hessian $\op{H}_k$, and hence $\op{H}^{\mathrm{I}}_k$, is 
$$t(k)\approx C_{op} N_k^p,\ \  p\ge 1.$$
 For the elliptic-constrained problem~\eqref{eq:contprobPDE} we take $p=1$ if we use classical multigrid for solving 
the elliptic problems, while for the image deblurring example we have $p=2$. We assume that the cost of applying
$\op{H}_k$ dominates the added $O(N_k)$-costs of projecting vectors onto the coarse space  and other usual vector
additions in the preconditioner, hence we discard the latter from the cost computation. 
The last hypothesis is that for any level $k$, CG converges
to the desired tolerance in at most $F_{cg}$ iterations at a cost of $c F_{cg} N_k$ flops. In practice
we have seen $F_{cg}$ to range between $10-100$ on a variety of problems.
It follows from Algorithm~\ref{alg:multigrid}
that the cost $f(k)$ of applying $\op{Z}_k$ satisfies the recursion:
\beq
\label{eq:Fjrecursion}
f(j)&\le & f({j-1}) + O(N_j) \approx f({j-1}) \\
\nonumber
f(k)&\le & 2 f({k-1}) + t(k),\ \ k=j_0+1,\dots, j-1\\
\nonumber
f({j_0})& \le & c F_{cg} N_{j_0} .
\eeq
Assuming that $2\alpha^p<1$, a standard argument shows that
\beqs
f({j-1})\le 2^{j-j_0-1} c F_{cg} N_{j_0} + C_{op}\frac{1-(2\alpha^p)^{j-j_0-1}}{1-2 \alpha^p} N_{j-1}^p.
\eeqs
If we denote by $l=j-j_0+1$ the number of levels used (i.e., $j=j_0+2$ meaning three levels)
and discard the $O(N_j)$ term in~\eqref{eq:Fjrecursion}, then
\beq
\label{eq:costmmg}
f({j})\le \left((2\alpha)^{l-1} F_{cg} \frac{c }{2C_{op}}  + \frac{\alpha^p}{1-2 \alpha^p}(1-(2\alpha^p)^{l-2}) \right) \overbrace{C_{op}N_j^p}^{t(j)}.
\eeq
The expression above is not expected to be consistent with the cases $l=1, 2$ due to the neglection of the
costs of projections.
Formula~\eqref{eq:costmmg} shows 
that it is certainly advantageus to use as many levels as possible to keep the 
cost $f(j)$ of applying the preconditioner $\op{Z}_j$ low relative to the cost $t(j)$ 
of applying the inactive Hessian $\op{H}^{\mathrm{I}}_j$. Asymptotically, 
if $l$ is large, then 
\beq
\label{eq:asymptcost}
f(j)\approx \frac{\alpha^p}{(1-2\alpha^p)}t(j)\ .
\eeq
If $\alpha$ is truly small due to high-dimensionality and/or the cost of applying the Hessian is high
(either $C_{op}\gg c$ or $p>1$), then the relative cost $f(j)/t(j)$ can be small even with a low number of levels.
We expect the wall-clock timings we show in Section~\ref{sec:numerics} to give a better picture of the computational 
savings of using the \emph{multigrid preconditioned conjugate gradient} (MGCG) versus CG. 

However, we must point out that our computations are only two-dimensional, so $\alpha \approx 1/2$. Thus, 
in order to notice significant savings in computing time, we need either high-resolution and/or many levels.
For higher dimensions (three and four), the factor $\alpha^p$ in~\eqref{eq:asymptcost} is expected to 
be significantly smaller, resulting in a much lower cost of applying the multigrid preconditioner. Thus we anticipate
that the wall-clock savings in higher dimensional problems will occur at lower resolutions as for two-dimensional problems.

\subsection{Analysis}
Estimating the spectral distance between the multigrid preconditioner $\op{Z}_{j}$ and $(\op{H}^{\mathrm{I}}_{j})^{-1}$
follows the same path as the analysis in~\cite{doi:10.1080/10556788.2013.854356}. The only significant difference lies in the
presence of the projection $\pi_{k}^{\mathrm{I}}$ in the operator $\mathfrak{I}_{k-1}^k$ defined in~\eqref{eq:transopdef}\footnote{Erratum: 
On p. 800 of~\cite{doi:10.1080/10556788.2013.854356} the correct definition is 
$\mathfrak{I}_{j-1}^j(\op{X}) = \op{X}\cdot \pi_{j-1}^{\mathrm{in}} + \beta^{-1}(I-\pi_{j-1}^{\mathrm{in}})$.}. We now verify that $\mathfrak{I}_{k-1}^k$
is non-expansive in the spectral distance, a result similar to Lemma~4.2 in~~\cite{doi:10.1080/10556788.2013.854356}.
\begin{lemma}
\label{lma:nonexpans_transop}
For $k=1, 2, \dots$, and $\op{X}\in \mathfrak{L}_+(\op{U}_{k-1}^{\mathrm{I}})$, we have 
$\mathfrak{I}_{k-1}^k(\op{X})\in \mathfrak{L}_+(\op{U}_{k}^{\mathrm{I}})$. Moreover, if $\op{X},~\op{Y}\in \mathfrak{L}_+(\op{U}_{k-1}^{\mathrm{I}})$, then 
\beq
\label{eq:nonexpans_transop}
d_{\op{U}_{k}^{\mathrm{I}}}(\mathfrak{I}_{k-1}^k(\op{X}),\mathfrak{I}_{k-1}^k(\op{Y})) \le 
d_{\op{U}_{k-1}^{\mathrm{I}}}(\op{X},\op{Y}) \ .
\eeq
\end{lemma}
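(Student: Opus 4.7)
The plan is to evaluate the quadratic form $\innprd{\mathfrak{I}_{k-1}^k(\op{X})v}{v}$ in closed form for $v\in\op{U}_k^{\mathrm{I}}$, and then observe that the non-expansion property reduces to an elementary monotonicity lemma. First, since $v\in\op{U}_k^{\mathrm{I}}$ and $\pi_k^{\mathrm{I}}$ is the $L^2$-orthogonal projection onto $\op{U}_k^{\mathrm{I}}$, the outer projection in~\eqref{eq:transopdef} can be dropped when pairing against $v$. Then, using that $\op{X}\pi_{k-1}^{\mathrm{I}} v\in\op{U}_{k-1}^{\mathrm{I}}$ together with self-adjointness of $\pi_{k-1}^{\mathrm{I}}$, I expect to obtain the clean identity
\begin{equation*}
\innprd{\mathfrak{I}_{k-1}^k(\op{X})v}{v}
\;=\;\innprd{\op{X}\pi_{k-1}^{\mathrm{I}} v}{\pi_{k-1}^{\mathrm{I}} v}
\;+\;\beta^{-1}\nnorm{(I-\pi_{k-1}^{\mathrm{I}})v}^2.
\end{equation*}

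From this identity the first assertion is immediate: both summands are nonnegative because $\op{X}\in\mathfrak{L}_+(\op{U}_{k-1}^{\mathrm{I}})$, and their sum vanishes only if $(I-\pi_{k-1}^{\mathrm{I}})v=0$ (forcing $v=\pi_{k-1}^{\mathrm{I}} v\in\op{U}_{k-1}^{\mathrm{I}}$) and $\pi_{k-1}^{\mathrm{I}} v=0$, hence $v=0$. Symmetry of $\mathfrak{I}_{k-1}^k(\op{X})$ follows by polarizing the same computation and invoking the symmetry of $\op{X}$ and of the two projections, so $\mathfrak{I}_{k-1}^k(\op{X})\in\mathfrak{L}_+(\op{U}_k^{\mathrm{I}})$.

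For~\eqref{eq:nonexpans_transop}, write $a=\innprd{\op{X}\pi_{k-1}^{\mathrm{I}} v}{\pi_{k-1}^{\mathrm{I}} v}$, $b=\innprd{\op{Y}\pi_{k-1}^{\mathrm{I}} v}{\pi_{k-1}^{\mathrm{I}} v}$ and $c=\beta^{-1}\nnorm{(I-\pi_{k-1}^{\mathrm{I}})v}^2\ge 0$. The elementary fact that $t\mapsto(a+t)/(b+t)$ is monotone and tends to $1$ as $t\to\infty$ gives
\begin{equation*}
\Abs{\ln\tfrac{a+c}{b+c}}\;\le\;\Abs{\ln\tfrac{a}{b}}
\end{equation*}
whenever $a,b>0$; when $\pi_{k-1}^{\mathrm{I}} v=0$ the left-hand side is zero and is trivially bounded. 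Taking the supremum over $v\in\op{U}_k^{\mathrm{I}}\setminus\{0\}$ and noting that $\pi_{k-1}^{\mathrm{I}} v$ ranges only over a subset of $\op{U}_{k-1}^{\mathrm{I}}$, one obtains
\begin{equation*}
d_{\op{U}_k^{\mathrm{I}}}(\mathfrak{I}_{k-1}^k(\op{X}),\mathfrak{I}_{k-1}^k(\op{Y}))
\;\le\;
\sup_{u\in\op{U}_{k-1}^{\mathrm{I}}\setminus\{0\}}\Abs{\ln\tfrac{\innprd{\op{X}u}{u}}{\innprd{\op{Y}u}{u}}}
\;=\;d_{\op{U}_{k-1}^{\mathrm{I}}}(\op{X},\op{Y}),
\end{equation*}
which is the desired estimate.

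I do not anticipate a serious obstacle here: the non-conformity of the coarse spaces, which was the crux of the earlier results, enters only through the outer projection $\pi_k^{\mathrm{I}}$, and that projection disappears harmlessly when one tests against $v\in\op{U}_k^{\mathrm{I}}$. The only minor care needed is in handling the degenerate case $\pi_{k-1}^{\mathrm{I}} v=0$ and in observing that the range of $\pi_{k-1}^{\mathrm{I}}$ restricted to $\op{U}_k^{\mathrm{I}}$ lies inside $\op{U}_{k-1}^{\mathrm{I}}$, so that passing to the sup over $\op{U}_{k-1}^{\mathrm{I}}$ only enlarges the bound.
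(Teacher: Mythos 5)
Your proof is correct and follows essentially the same route as the paper's: you drop the outer projection $\pi_k^{\mathrm{I}}$ when testing against $v\in\op{U}_k^{\mathrm{I}}$, reduce the ratio of quadratic forms via the scalar inequality $\Abs{\ln\frac{a+c}{b+c}}\le\Abs{\ln\frac{a}{b}}$ (the paper's Lemma~4.1 of~\cite{doi:10.1080/10556788.2013.854356}), and enlarge the supremum from $\{\pi_{k-1}^{\mathrm{I}}v : v\in\op{U}_k^{\mathrm{I}}\}$ to all of $\op{U}_{k-1}^{\mathrm{I}}$. Your explicit closed-form identity for the quadratic form and the resulting positive-definiteness argument are a welcome addition, since the paper leaves that part as an exercise.
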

\indent\emph{Proof.}
If $\op{X}\in \mathfrak{L}_+(\op{U}_{k-1}^{\mathrm{I}})$, then for $u, v\in \op{U}_{k}^{\mathrm{I}}$ we have
\beqs
\innprd{\pi_{k}^{\mathrm{I}}\op{X} \pi_{k-1}^{\mathrm{I}}u }{v} &=& \innprd{\op{X} \pi_{k-1}^{\mathrm{I}}u }{v}
= \innprd{\op{X} \pi_{k-1}^{\mathrm{I}}u }{\pi_{k-1}^{\mathrm{I}}v} = \innprd{\pi_{k-1}^{\mathrm{I}}u }{\op{X} \pi_{k-1}^{\mathrm{I}}v}\\
&=& \innprd{u }{\op{X} \pi_{k-1}^{\mathrm{I}}v} = \innprd{u }{\pi_{k}^{\mathrm{I}} \op{X} \pi_{k-1}^{\mathrm{I}}v},
\eeqs
which shows that $\pi_{k}^{\mathrm{I}} \op{X} \pi_{k-1}^{\mathrm{I}}$ is symmetric (recall that neither of $\op{U}_{k-1}^{\mathrm{I}}$
and $\op{U}_{k}^{\mathrm{I}}$ are assumed to be subspaces of each other). Given the symmetry of the orthogonal 
projection $\pi_{k}^{\mathrm{I}}(I-\pi_{k-1}^{\mathrm{I}})$ onto $(\op{U}_{k-1}^{\mathrm{I}})^\perp \cap \op{U}_{k}^{\mathrm{I}}$, 
the symmetry of $\mathfrak{I}_{k-1}^k(\op{X})$ follows. We leave the positive definiteness of $\mathfrak{I}_{k-1}^k(\op{X})$ as 
an exercise to the reader.

Let $\op{X},\ \op{Y} \in \mathfrak{L}_+(\op{U}_{k-1}^{\mathrm{I}})$. By Lemma~4.1 in~\cite{doi:10.1080/10556788.2013.854356} we have
\beq
\label{eq:logineq}
\Abs{\ln\frac{w_1+x}{w_2+x}}\le \Abs{\ln\frac{w_1}{w_2}}, \forall x>0,
\eeq 
for any $w_1, w_2$ complex numbers in the right half-plane. So
\beqs
&&d_{\op{U}_{k}^{\mathrm{I}}}(\mathfrak{I}_{k-1}^k(\op{X}),\mathfrak{I}_{k-1}^k(\op{Y})) =
\sup_{u\in \op{U}_{k}^{\mathrm{I}}\setminus\{0\}} 
\Abs{\ln\frac{\innprd{\pi_{k}^{\mathrm{I}}(\op{X}\pi_{k-1}^{\mathrm{I}}+\beta^{-1}(I-\pi_{k-1}^{\mathrm{I}}))u}{u}}
  {\innprd{\pi_{k}^{\mathrm{I}}(\op{Y}\pi_{k-1}^{\mathrm{I}}+\beta^{-1}(I-\pi_{k-1}^{\mathrm{I}}))u}{u}}}\\
&&= \sup_{u\in \op{U}_{k}^{\mathrm{I}}\setminus\{0\}} \Abs{\ln\frac{\innprd{(\op{X}\pi_{k-1}^{\mathrm{I}}+\beta^{-1}(I-\pi_{k-1}^{\mathrm{I}}))u}{u}}
  {\innprd{(\op{Y}\pi_{k-1}^{\mathrm{I}}+\beta^{-1}(I-\pi_{k-1}^{\mathrm{I}}))u}{u}}}
\stackrel{\eqref{eq:logineq}}{\le}\sup_{u\in \op{U}_{k}^{\mathrm{I}}\setminus\{0\}} \Abs{\ln\frac{\innprd{\op{X}\pi_{k-1}^{\mathrm{I}}u}{u}}
  {\innprd{\op{Y}\pi_{k-1}^{\mathrm{I}}u}{u}}} \\
= &&\sup_{u\in \op{U}_{k}^{\mathrm{I}}\setminus\{0\}} \Abs{\ln\frac{\innprd{\op{X}\pi_{k-1}^{\mathrm{I}}u}{\pi_{k-1}^{\mathrm{I}}u}}
  {\innprd{\op{Y}\pi_{k-1}^{\mathrm{I}}u}{\pi_{k-1}^{\mathrm{I}}u}}}
\le \sup_{v\in \op{U}_{k-1}^{\mathrm{I}}\setminus\{0\}} \Abs{\ln\frac{\innprd{\op{X}v}{v}}{\innprd{\op{Y}v}{v}}} = d_{\op{U}_{k-1}^{\mathrm{I}}}(\op{X},\op{Y}).\qquad \endproof
\eeqs
We also recall two technical results from~\cite{MR2429872}.
The next result appears as Lemma~5.3 in~\cite{MR2429872}.
\begin{lemma}
\label{lma:lma53}
Let $(e_k)_{k\ge 0}$ and $(a_k)_{k \ge 0}$ be positive numbers satisfying
\begin{equation}
\label{eq:mg_err_abst}
e_{k} \le C(e_{k-1} + a_{k})^2\ ,\ \  a_{k}  \le  a_{k-1} \le f^{-1} a_{k},\ \ k=1, 2, \dots, 
\end{equation}
for some $0<f<1$. If $a_0 \le \frac{f}{4 C}$ and if $e_0 \le 4 C a_0^2$, then 
\begin{equation}
e_k \le 4 C a_k^2,\ \ \forall k> 0\ .
\end{equation}  
\end{lemma}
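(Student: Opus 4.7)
The plan is to argue by induction on $k$: the base case $k=0$ is precisely the hypothesis $e_0 \le 4 C a_0^2$, and I will show that if $e_{k-1} \le 4 C a_{k-1}^2$ then $e_k \le 4 C a_k^2$. Substituting the inductive hypothesis into the recursion $e_k \le C(e_{k-1}+a_k)^2$ gives
$$e_k \le C\bigl(4 C a_{k-1}^2 + a_k\bigr)^2,$$
so the task reduces to verifying that $4 C a_{k-1}^2 + a_k \le 2 a_k$.

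The key step is to use the smallness of $a_0$ to linearize the quadratic term. Since $(a_k)$ is non-increasing by the second part of \eqref{eq:mg_err_abst}, we have $a_{k-1} \le a_0 \le f/(4C)$, so $4 C a_{k-1} \le f$ and therefore $4 C a_{k-1}^2 \le f\, a_{k-1}$. Combining this with the upper comparison $a_{k-1} \le f^{-1} a_k$ yields
$$4 C a_{k-1}^2 \le f \cdot f^{-1} a_k = a_k,$$
hence $4 C a_{k-1}^2 + a_k \le 2 a_k$, and consequently $e_k \le C(2 a_k)^2 = 4 C a_k^2$, which closes the induction.

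I expect the only subtle point is recognizing why the particular constant $f/(4C)$ is the right one in the smallness hypothesis on $a_0$: it is calibrated so that the quadratic term $4 C a_{k-1}^2$ in the recursion can be traded for $f\, a_{k-1}$, which precisely absorbs the expansion factor $f^{-1}$ permitted in $a_{k-1} \le f^{-1} a_k$. Once this linearization trick is spotted, no further estimates are needed and the argument is a direct one-line computation per induction step.
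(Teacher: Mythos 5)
Your induction is correct: the hypothesis $a_{k-1}\le a_0\le f/(4C)$ gives $4Ca_{k-1}^2\le f\,a_{k-1}\le a_k$, so $e_k\le C(4Ca_{k-1}^2+a_k)^2\le C(2a_k)^2=4Ca_k^2$, and the base case is exactly the assumption on $e_0$. The paper does not reproduce a proof of this lemma (it is quoted verbatim as Lemma~5.3 of the cited reference), but your argument is the standard one and the calibration of the constant $f/(4C)$ that you identify is precisely the point of the hypothesis.
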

Second, from Theorem~3.12 in~\cite{MR2429872} we extract the following result signifying the quadratic
convergence of Newton's method for the operator equation $X^{-1}-A=0$ measured in the spectral distance.
\begin{lemma}
\label{lma:lma312}
Given a Hilbert space~$\op{X}$ and $A,H\in\mathfrak{L}_+(\op{X})$ so that
\mbox{$d_{\op{X}}(A,H^{-1})< 0.4$}, we have
\beq
\label{eq:quadspecdist}
d_{\op{X}}(2 A-A H A, H^{-1}) \le 2 \left(d_{\op{X}}(A,H^{-1})\right)^2\ .
\eeq
\end{lemma}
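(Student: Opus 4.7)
The plan is to exploit the self-adjointness of $H$ via the similarity transform $T\eqdef H^{1/2}AH^{1/2}\in\mathfrak{L}_+(\op{X})$. First I would change variables by $v=H^{1/2}u$ in the definition of the spectral distance to obtain
\[
d_{\op{X}}(A,H^{-1})=\sup_{v\ne 0}\Abs{\ln\frac{\innprd{Tv}{v}}{\innprd{v}{v}}},
\]
so, writing $d\eqdef d_{\op{X}}(A,H^{-1})$, the hypothesis $d<0.4$ is equivalent to $\sigma(T)\subset[e^{-d},e^{d}]$. A direct computation gives $H^{1/2}(2A-AHA)H^{1/2}=2T-T^{2}$, and since $\sigma(T)\subset(0,2)$ under the assumption, $2T-T^{2}$ is itself in $\mathfrak{L}_+(\op{X})$ with eigenvalues $\{2\lambda-\lambda^{2}:\lambda\in\sigma(T)\}$. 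Applying the same change of variables a second time reduces the claim to
\[
\max_{\lambda\in\sigma(T)}\Abs{\ln(2\lambda-\lambda^{2})}\le 2d^{2}.
\]

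It then suffices to verify the scalar inequality $\abs{\ln(2e^{x}-e^{2x})}\le 2x^{2}$ for $|x|\le 0.4$, since each eigenvalue may be written as $\lambda=e^{x}$ with $|x|\le d$. I would prove this via the identity $2e^{x}-e^{2x}=1-(e^{x}-1)^{2}$, which gives
\[
\Abs{\ln(2e^{x}-e^{2x})}=-\ln\bigl(1-(e^{x}-1)^{2}\bigr)=\sum_{j\ge 1}\frac{(e^{x}-1)^{2j}}{j}\le\frac{(e^{x}-1)^{2}}{1-(e^{x}-1)^{2}},
\]
valid because $(e^{x}-1)^{2}<1$ on $[-0.4,0.4]$. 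With $(e^{x}-1)^{2}\le(e^{0.4}-1)^{2}<0.242$, the denominator is at least $0.758$, and the elementary estimate $(e^{x}-1)^{2}\le 1.52\,x^{2}$ on $[-0.4,0.4]$ (obtained by continuity and a check at the endpoint, where the ratio $(e^{x}-1)^{2}/x^{2}$ is maximized) then dominates the right-hand side by $2x^{2}$.

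The conceptual reduction to the scalar case is routine once one views the spectral distance through the transform $T$, but the scalar inequality itself is the delicate step. A naive second-order Taylor expansion of $f(x)=\ln(2e^{x}-e^{2x})$ at $x=0$ will not suffice: one computes $f''(x)=-2e^{x}/(2-e^{x})^{2}$, which reaches magnitude $\approx 11$ at $x=0.4$ and would only yield a bound of the form $\abs{f(x)}\le 6x^{2}$. Using instead the series for $-\ln(1-(e^{x}-1)^{2})$ tracks sharply the near-cancellation between $x$ and $\ln(2-e^{x})$, and the constant $2$ in the statement together with the cutoff $0.4$ in the hypothesis are coupled: $0.4$ is essentially the largest value for which this calculation still yields the factor $2$, so relaxing either constant forces the other to change.
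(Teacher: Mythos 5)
Your reduction is the right one, and it is worth noting that the paper itself offers no proof of this lemma --- it is imported verbatim as Theorem~3.12 of~\cite{MR2429872} --- so any self-contained argument is ``new'' relative to this paper. Conjugating by $H^{1/2}$ (your substitution should read $u=H^{1/2}v$, i.e.\ $v=H^{-1/2}u$, to produce $T=H^{1/2}AH^{1/2}$, but that is a trivial slip), observing that $H^{1/2}(2A-AHA)H^{1/2}=2T-T^2$, and invoking the spectral mapping theorem correctly reduces the operator statement to the scalar inequality $\Abs{\ln(2e^x-e^{2x})}\le 2x^2$ for $\abs{x}\le 0.4$, and the identity $2e^x-e^{2x}=1-(e^x-1)^2$ is the right way to see the cancellation.

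The final numerical step, however, does not close as written. You bound $-\ln(1-t)\le t/(1-t)$ with $t=(e^x-1)^2$, then use $1-t\ge 0.758$ and $t\le 1.52\,x^2$ separately; but $1.52/0.758\approx 2.0053>2$, so the chain delivers $2.0053\,x^2$ rather than $2x^2$. The difficulty is that both of your auxiliary bounds are tight only at $x=0.4$ simultaneously, and maximizing them independently loses just enough to overshoot. The target inequality is nevertheless true with a little room: writing $1-(e^x-1)^2=e^x(2-e^x)$, one checks that $\psi(x)=\frac{(e^x-1)^2}{x^2\,e^x(2-e^x)}$ is increasing on $(0,0.4]$ and smaller on $[-0.4,0)$ than at the corresponding positive argument, and $\psi(0.4)=\frac{0.24189\ldots}{0.16\times 0.75811\ldots}\approx 1.994<2$. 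So replacing your two-step estimate by a direct verification that $\frac{(e^x-1)^2}{1-(e^x-1)^2}\le 2x^2$ on $[-0.4,0.4]$ (monotonicity of $\psi$ plus evaluation at the endpoint) repairs the proof. Your closing remark that $0.4$ is essentially the largest admissible cutoff is also not quite right --- the scalar inequality in fact persists up to roughly $\abs{x}\le 0.46$ --- but nothing in the lemma depends on that.
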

We are now in the position to prove the main result of this section. 
\begin{theorem}
\label{theo:multigrid}
Assume that the operators $\op{K}$, $(\op{K}_j)_{j\ge 0}$ satisfy Condition$~\ref{cond:condsmooth}$, and let $0\le j_0 < j$ be fixed indices.
Consider the inactive index-sets and inactive domains defined by Algorithm~$\ref{alg:inactive_sets}$, and the sequence of
operators  $\op{Z}_{k}$, $j_0\le k \le j$ defined by
Algorithm~$\ref{alg:multigrid}$. Denote by $\mu_k=\mu(\Omega_{k-1}^{\mathrm{I}}\setminus \Omega_{k}^{\mathrm{I}})$, 
and assume there exists $0<f\le f_{low}$ so that $\mu_k\le \mu_{k-1}\le f^{-1}\mu_k$ for $k=j_0+1, j_0+2, \dots, j$, 
with $f_{low}$ given in~\eqref{eq:refinement_assump}.
If 
\beq
\label{eq:coarsestgridcond}
C_{\mathrm{tg}}\beta^{-1}(h_{j_0}+\mu_{j_0})<\min(0.1, f/8),
\eeq
then there exists
$C_{\mathrm{mg}}>0$ independent of $j$ and 
the inactive set $\widehat{\op{I}}^{(j)}$ so that
\beq
\label{eq:optordmgprec}
d_{\op{U}_{j}^{\mathrm{I}}}(\op{Z}_{j},(\op{H}^{\mathrm{I}}_{j})^{-1})\le 
C_{\mathrm{mg}} \beta^{-1}\left(h_{j} + \mu_{j}\right)\ . 
\eeq
\end{theorem}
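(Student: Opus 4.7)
The plan is to reduce to the two-grid estimate via the standard multigrid recursion analyzed in~\cite{MR2429872}, using the four ingredients that have already been established: (i) the two-grid estimate Theorem~\ref{th:optordprec}, (ii) the non-expansiveness of $\mathfrak{I}_{k-1}^k$ in the spectral distance (Lemma~\ref{lma:nonexpans_transop}), (iii) the quadratic-convergence Newton step of Lemma~\ref{lma:lma312}, and (iv) the abstract recursion estimate Lemma~\ref{lma:lma53}. I would introduce the shorthand $e_k = d_{\op{U}_k^{\mathrm{I}}}(\op{Z}_k, (\op{H}^{\mathrm{I}}_k)^{-1})$ and $a_k = C_{\mathrm{tg}} \beta^{-1}(h_k + \mu_k)$ for $k = j_0{+}1,\dots,j$. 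By construction $e_{j_0}=0$.

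For each intermediate level $k\in\{j_0{+}1,\dots,j{-}1\}$, write $\op{Y}_k = \mathfrak{I}_{k-1}^k(\op{Z}_{k-1})$ and set $\op{M}_k = \mathfrak{I}_{k-1}^k((\op{H}^{\mathrm{I}}_{k-1})^{-1})$, which is precisely the two-grid preconditioner between levels $k{-}1$ and $k$. Lemma~\ref{lma:nonexpans_transop} yields $d(\op{Y}_k,\op{M}_k)\le e_{k-1}$, and Theorem~\ref{th:optordprec} gives $d(\op{M}_k,(\op{H}^{\mathrm{I}}_k)^{-1})\le a_k$, so the triangle inequality produces $d(\op{Y}_k,(\op{H}^{\mathrm{I}}_k)^{-1})\le e_{k-1}+a_k$. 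Since $\op{Z}_k = \mathfrak{N}_k(\op{Y}_k) = 2\op{Y}_k - \op{Y}_k \op{H}^{\mathrm{I}}_k \op{Y}_k$, Lemma~\ref{lma:lma312} delivers the quadratic recursion $e_k \le 2(e_{k-1}+a_k)^2$, provided $e_{k-1}+a_k<0.4$. The monotonicity $a_k\le a_{k-1}\le f^{-1}a_k$ follows from~\eqref{eq:refinement_assump} (for the $h_k$ part, using $f\le f_{low}$) and the theorem's own hypothesis on $\mu_k$; the smallness $a_{j_0+1}\le f/8$ is furnished by~\eqref{eq:coarsestgridcond}. A short induction then shows $e_{k-1}+a_k\le (1+f^{-1})\cdot f/8 \le 1/4 < 0.4$ at every intermediate step, so the Newton hypothesis always holds, and Lemma~\ref{lma:lma53} with $C=2$ yields $e_k \le 8 a_k^2$ for $k = j_0{+}1,\dots,j{-}1$.

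At the finest level, $\op{Z}_j = \mathfrak{I}_{j-1}^j(\op{Z}_{j-1})$ carries no Newton step. Repeating the non-expansiveness and triangle-inequality argument one last time,
\[
e_j \le d(\op{Z}_j,\op{M}_j) + d(\op{M}_j,(\op{H}^{\mathrm{I}}_j)^{-1}) \le e_{j-1} + a_j,
\]
and the chain of bounds $e_{j-1}\le 8 a_{j-1}^2 \le f\,a_{j-1} \le a_j$ (using $a_{j-1}\le f/8$ and $a_{j-1}\le f^{-1}a_j$) gives $e_j \le 2 a_j$, i.e., the claim with $C_{\mathrm{mg}} = 2 C_{\mathrm{tg}}$. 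The main subtlety in this program is that the projections $\pi_k^{\mathrm{I}}$ onto the non-conforming coarse spaces could in principle obstruct the iterated use of Lemma~\ref{lma:nonexpans_transop}; however, that lemma was proved in exactly the generality required here (symmetry and positive-definiteness of $\mathfrak{I}_{k-1}^k(\op{X})$ without assuming $\op{U}_{k-1}^{\mathrm{I}}\subseteq \op{U}_k^{\mathrm{I}}$), so the rest reduces to bookkeeping the constants across levels.
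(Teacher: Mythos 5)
Your proof is correct and follows essentially the same route as the paper's: the same shorthand $e_k$, $a_k$, the same triangle-inequality/non-expansiveness/Newton-step recursion $e_k\le 2(e_{k-1}+a_k)^2$, an induction to keep the Newton hypothesis valid, Lemma~\ref{lma:lma53} with $C=2$, and a final non-quadratic step at level $j$. The only (harmless) difference is bookkeeping: by using $a_{j-1}\le f/8$ rather than $a_{j-1}\le 0.1$ in the last step you obtain the slightly cleaner constant $C_{\mathrm{mg}}=2C_{\mathrm{tg}}$ in place of the paper's $(0.8f^{-1}+1)C_{\mathrm{tg}}$.
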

\begin{proof}
For $j_0\le k\le j$ denote $e_k=d_{\op{U}_k^{\mathrm{I}}}(\op{Z}_{k},(\op{H}^{\mathrm{I}}_{k})^{-1})$, and 
$a_k = C_{\mathrm{tg}}\beta^{-1}\left(h_k + \mu_k\right)$.
The assumptions on $\mu_k$ and $f$ imply that
$$a_k\le a_{k-1}\le f^{-1} a_k,\ \ \forall \ \ j_0+1 \le k \le j,$$ 
and that $a_k\le 0.1$ for $j_0 \le k \le j$.
Since for $k<j$ the operator $\op{Z}_{k}$ is defined as $\mathfrak{N}_k(\mathfrak{I}_{k-1}^k (\op{Z}_{k-1}))$,
our first goal is to ensure that~\eqref{eq:quadspecdist} holds for all $k\ge j_0$ with $A=\mathfrak{I}_{k-1}^k (\op{Z}_{k-1})$
and $H= \op{H}^{\mathrm{I}}_{k}$. Thus we prove by induction that $e_k<0.2$ for $j_0\le k\le j-1$, 
and that the sequences $e_k$ and $a_k$ satisfy~\eqref{eq:mg_err_abst}
with $C=2$ for $j_0\le k< j$. Note that $e_{j_0}=0$. For $k\ge j_0+1$, after recalling that 
$\op{M}_{k} = \mathfrak{I}_{k-1}^k\left((\op{H}^{\mathrm{I}}_{k-1})^{-1}\right)$, we have 
\beq
d_{\op{U}_k^{\mathrm{I}}}(\mathfrak{I}_{k-1}^k (\op{Z}_{k-1}),(\op{H}^{\mathrm{I}}_{k})^{-1}) &\le &
d_{\op{U}_k^{\mathrm{I}}}(\mathfrak{I}_{k-1}^k (\op{Z}_{k-1}),\op{M}_{k}) +
d_{\op{U}_k^{\mathrm{I}}}(\op{M}_{k},(\op{H}^{\mathrm{I}}_{k})^{-1})\nonumber\\
\label{eq:firstboundmg}
&\stackrel{\eqref{eq:optordprec},\eqref{eq:nonexpans_transop}}{\le}& e_{k-1}+a_k \stackrel{\mathrm{inductive\  hyp.}}{\le}\ 0.2+0.1=0.3.
\eeq
So Lemma~\ref{lma:lma312} together with~\eqref{eq:firstboundmg}  implies that
\beq
\label{eq:eq:secboundmg}
e_k = d_{\op{U}_k^{\mathrm{I}}}(\mathfrak{N}_k(\mathfrak{I}_{k-1}^k (\op{Z}_{k-1})),(\op{H}^{\mathrm{I}}_{k})^{-1})<2(e_{k-1}+a_k)^2 < 2(0.3)^2<0.2\ ,
\eeq
and the inductive statement is proved.
Since $a_{j_0}< f/8$ by assumption, Lemma~\ref{lma:lma53} now implies that
$$e_{j-1}\le 8 a_{j-1}^2\ .$$
Since 
$\op{Z}_{j}=\mathfrak{I}_{j-1}^{j} (\op{Z}_{j-1})$,
it follows, as above, that
\beqs
e_{j} &\le & e_{j-1} + a_{j} \le 8 a_{j-1}^2+ a_{j} \le 
 (0.8\cdot f^{-1}+1) a_{j}\ .
\eeqs
Therefore~\eqref{eq:optordmgprec} holds with $C_{\mathrm{mg}} = (0.8\cdot f^{-1}+1) C_{\mathrm{tg}}$.
\end{proof}
\begin{remark}
\label{rem:mggridhypopt}
We should note that the hypotheses of Theorem~$\ref{theo:multigrid}$ are consistent with the
scenario discussed in Remark~$\ref{rem:twogridhypopt}$ under which the correct inactive domain $\Omega^{\mathrm{I}}$
is sufficiently regular and the sets $\Omega^{\mathrm{I}}_k$, $j_0\le k\le j$, approximate $\Omega^{\mathrm{I}}$ sufficiently 
well so that $\mu_k \approx C h_k$. Under these conditions, Theorem~$\ref{theo:multigrid}$ also shows that 
the multigrid preconditioner is of optimal order, assuming that the coarsest grid $j_0$ is sufficiently fine 
for~\eqref{eq:coarsestgridcond} to hold.
\end{remark}

\section{Numerical experiments}
\label{sec:numerics}
We test our multigrid preconditioner on two problems. In Section~\ref{ssec:numerics_ell} we consider
a classical elliptic-constrained optimization problem, while in  Section~\ref{ssec:numerics_deblur} 
we showcase the behavior of our algorithm on a constrained optimization method related to image deblurring.
Essentially, in these numerical experiments we are looking, first, for a validation of our theoretical results 
and, second, we would like to estimate the practical value of our preconditioning technique. With respect to the first aim 
we would like to see that the two-grid preconditioner gives rise to a number of linear iterations per SSNM 
step that decreases (in average) with respect to increasing resolution. A similar behaviour is expected to hold
for three-grid preconditioners, four-grid preconditioners, etc; we call this the \emph{weak test}, and we expect
all computations to pass this.
We are also interested to see if the experiments pass the following
\emph{strong test}: for a fixed, acceptable (cf. Theorem~\ref{theo:multigrid})
base level $j_0$, we should observe the number of linear iterations per SSNM 
to be decreasing with an increasing number of levels. The strong test is expected to hold only asymptotically in general,
since $C_{\mathrm{mg}}$ from Theorem~\ref{theo:multigrid} is  larger than $C_{\mathrm{tg}}$ from 
Theorem~\ref{th:optordprec}; this normally results in an increase in number of iterations
from two-grid to three-grid preconditioning, only to begin decreasing when the number
of levels is sufficiently large. If the multigrid preconditioner passes the strong test for a given set of parameters, 
then we expect to see an increase in wall-clock efficiency as well.
We also expect that the multigrid preconditioner is inefficient or even fails
if the base level resolution $h_{j_0}$ is too large relative to $\beta$. With respect to the second aim we simply 
want to observe the wall-clock efficiency of the multigrid preconditioner. All computations were performed using M{\footnotesize ATLAB} 
on a system with two eight-core 2.9 GHz Intel  Xeon E5-2690 CPUs and 256 GB memory.

\subsection{An elliptic-constrained optimal control problem}
\label{ssec:numerics_ell}
For the first numerical experiment we consider the classical elliptic-constrained optimization
problem
\begin{eqnarray}
\label{eq:ellptic_constrained}
\left\{
  \begin{array}{l}\vspace{7pt}
    \min_{u\in L^2(\Omega)} \frac{1}{2}\norm{y-y_d}^2 +
    \frac{\beta}{2}\norm{u}^2\\   \textnormal{subject to\ \ } -\Delta
    y=u\ \ \mathrm{(weakly)},\ \ y\in H_0^1(\Omega),\  0\le u \le 1\
    \  a.e.\  \mathrm{in}\ \Omega,
  \end{array}\right .
\end{eqnarray}
where $\Delta$ is the Laplace operator acting on $H_0^1(\Omega)$ with $\Omega=(0,1)\times(0,1)\subset \R^2$.
Therefore, $\op{K}=(-\Delta)^{-1}$.
We define the data by $y_d=\op{K} u_d$, where the so-called target control $u_d$ is the step function shown in the left-side of
Figure~\ref{fig:uduminb4}. Note that $u_d$ is bounded between $0$ and $1$, and is supported inside the domain $\Omega$. Naturally, 
for $\beta\ll 1$ we expect $u_{\min}\approx u_d$. In absence of any box-constraints, or when the constraints turn out to be
everywhere inactive,  the solution $u_{\min}$ of~\eqref{eq:ellptic_constrained} also solves the Tikhonov-regularized 
inverse problem $\op{K}u=y_d$.
It is well known that in this case $u_{\min}$ may  not be localized and can  exhibit an oscillatory behavior near the support of $u_d$. 
In order to showcase the behavior of our algorithm we selected a range of values for~$\beta$ that render
the constraints to be active on a significant portion of $\Omega$ (which requires a sufficiently small~$\beta$),  
while allowing at the same time  for a relatively fast convergence, e.g., less than ten SSNM iterations.
We thus present results for $\beta=10^{-4}, 10^{-5}$, and~$10^{-6}$ in 
Tables~\ref{tab:new_method_b4},~\ref{tab:new_method_b5}, and~\ref{tab:new_method_b6}, respectively.
For the $\beta$-values listed we show the solution $u_{\min}$ in Figures~\ref{fig:uduminb4} (right image) 
and~\ref{fig:uduminb5}. For $\beta=10^{-6}$ (Figure~\ref{fig:uduminb5}, right) both constraints are 
active at the solution, while for $\beta=10^{-4}$ and $\beta=10^{-5}$ only the lower constraints are active.

\begin{figure}[!htb]
        \includegraphics[width=5.3in]{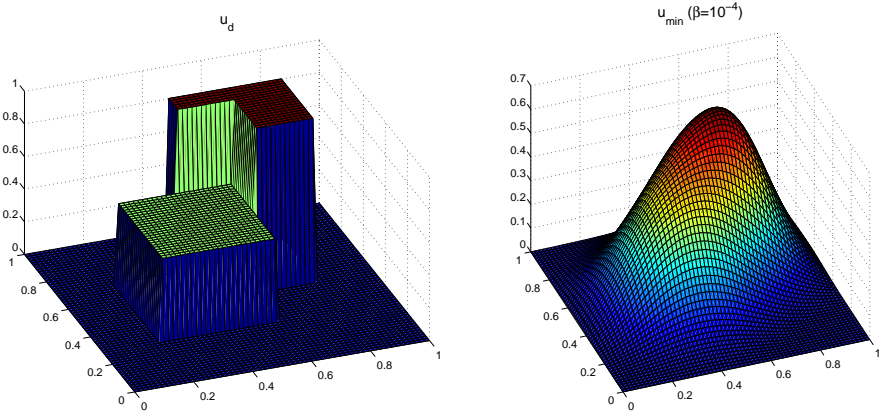}
\label{fig:uduminb4}
\caption{Left: target control $u_d$. Right: optimal control $u_{\min}$ for $\beta=10^{-4}$.}
\end{figure}

\begin{figure}[!htb]
\begin{center}
        \includegraphics[width=5.3in]{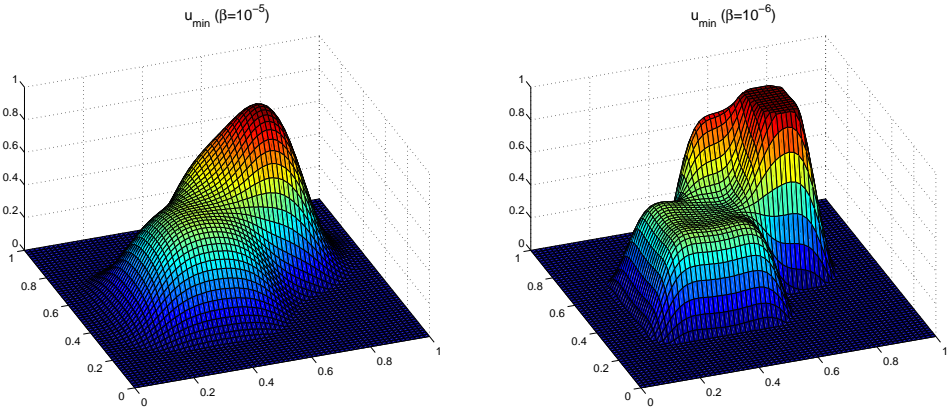}
\caption{Left: optimal control $u_{\min}$ for $\beta=10^{-5}$. Right: optimal control $u_{\min}$ for $\beta=10^{-6}$.}
\label{fig:uduminb5}
\end{center}
\end{figure}

Given $n\in\N$, we divide $\Omega$ uniformly in $n^2$ squares and we discretize the control space
using piecewise constant functions; the departure from the theoretical framework in the earlier sections is minimal, we just
replaced triangular elements with rectangular ones.
We then use a standard Galerkin formulation to produce a discrete version of $\op{K}$ on each grid using continuous bilinear
finite elements. Standard finite element analysis (e.g., see~\cite{MR2373954}) shows that the SAC 
Condition~\ref{cond:condsmooth} is satisfied; in particular, part [c] of Condition~\ref{cond:condsmooth} follows from the
$H^2$-regularity of the elliptic equation coupled with  $L^{\infty}$-convergence (see also~\cite{MR2322235}).
For each 
$$n_j=64\times 2^j, \ \ j=0,\dots, 6,$$
we initialize the SSNM using the solution obtained from a coarser level; we solve
the linear systems in the SSNM solution process using MGCG, and we compare the 
results against CG. For each run, we report in Tables~\ref{tab:new_method_b4},~\ref{tab:new_method_b5}, 
and~\ref{tab:new_method_b6} the average number of MGCG/CG iterations per SSNM step as well as the 
added wall-clock times used by the MGCG/CG solves during the entire solution process. The relative
tolerance for the linear solves is set at $10^{-8}$. The elliptic problem, i.e., the application of $\op{K}_j$ and $\op{K}_j^*$ needed
for applying the inactive Hessian $\op{H}_j^{\mathrm{I}}$, is solved numerically using either
direct methods (for $n\le 256$) or classical multigrid (the full approximation scheme FAS)
using a relative tolerance of $10^{-8}$; the base case for FAS was taken to be $n=256$, a choice that
effectively minimized wall-clock times for solving the elliptic problem on our system. 
For solving the base case (Step 1 in Algorithm~\ref{alg:multigrid}) in the multigrid preconditioner application
we use (unpreconditioned) CG with a matrix-free application of $\op{H}_{j_0}^{\mathrm{I}}$, and a tolerance of $10^{-10}$.
We should emphasize that the multigrid FAS for solving the elliptic problem is used only for applying $\op{K}_j$ and $\op{K}_j^*$,
and is completely independent from the multigrid preconditioner from Algorithm~\ref{alg:multigrid}, although in the implementation
they share part of the  infrastructure.

\begin{table}[!h]
 \begin{center}
 \caption{Comparison of iteration counts and runtimes for MGCG vs. CG; $\beta=10^{-4}$.}
      {\begin{tabular}{|@{}c|c|c|c|c|c|c|c|}\hline
          $n_j$ & $128$ & $256$ & $512$ &  $1024$ & $2048$ & 4096 & 8192\\\hline
          \# cg / it.    & 11.25 & 11.67 & 12 & 12 & 12 & 12 & 12 \\
          $t_{\mathrm{cg}}$ (s)  & 3.65 & 14 & 84 & 427 & 1915 & 2.97 h & 20.3 h \\\hline\hline
          \# mg / it., $j_0=0$ & 5 & 5 & 4 & 4 & 3 & 3 & 3\\
          $t_{\mathrm{mg}}$ (s) & 10.5 & 13.6 & 100 & 373 & 1242 & 1.41 h & 5.97 h \\\hline
          \emph{eff}=$t_{\mathrm{mg}}/t_{\mathrm{cg}}$& 2.87 & 1.16 & 1.19  & 0.87 & 0.65 & 0.48& 0.29 \\\hline
      \end{tabular}}
      \label{tab:new_method_b4}
\end{center}
\end{table}

\begin{table}[!h]
 \begin{center}
  \caption{Comparison of iteration counts and runtimes for MGCG vs. CG; $\beta=10^{-5}$.}
      {\begin{tabular}{|@{}c|c|c|c|c|c|c|c|}\hline
          $n_j$ & $128$ & $256$ & $512$ &  $1024$ & $2048$ & 4096 & 8192 \\\hline
          \# cg / it.    & 20 & 19.5 & 19.25 & 19.75 & 20 & 20 & 20 \\
          $t_{\mathrm{cg}}$ (s)  & 6.24 & 42 & 197 & 793 & 3081 & 4.88 h & 32.76 h\\\hline\hline
          \# mg / it., $j_0=0$ & 7.75 & 9 & 8.25 & 7.75 & 6 & 6.67 & 8 \\
          $t_{\mathrm{mg}}$ (s)  & 15.23 & 32 & 257 & 896 & 1949 &  2.28 h & 10.84 h \\\hline
          \emph{eff}=$t_{\mathrm{mg}}/t_{\mathrm{cg}}$& 2.44  & 0.76 & 1.3 & 1.12& 0.63 & 0.47 & 0.33 \\\hline\hline
          \# mg / it., $j_0=1$ & - & 6.5 & 7 & 6 & 5 & 4 & 5 \\
          $t_{\mathrm{mg}}$ (s)  & - & 63 & 254 & 796 & 1865 & 1.87 h & 8.4 h\\\hline
          \emph{eff}=$t_{\mathrm{mg}}/t_{\mathrm{cg}}$& - & 1.5 & 1.29 & 1.004 & 0.605 & 0.38 & 0.26\\\hline
      \end{tabular}}
      \label{tab:new_method_b5}
\end{center}
\end{table}

\begin{table}[!h]
  \begin{center}
 \caption{Comparison of iteration counts and runtimes for MGCG vs. CG; $\beta=10^{-6}$.}
      {\begin{tabular}{|@{}c|c|c|c|c|c|c|c|}\hline
          $n_j$ & $128$ & $256$ & $512$ &  $1024$ & $2048$ & 4096 & 8192 \\\hline
          \# cg / it.      & 32.5 & 31.75 & 31 & 31.75 & 33.25 & 33 & 34\\
          $t_{\mathrm{cg}}$ (s)  & 9.9 & 48 & 241 & 1312 & 1.92 h & 11.72 h & 54.58 h\\\hline\hline
          \# mg / it., $j_0=2$         & - & - & 9.75 & 11.75 & 16 & 11.25 & $> 50$ \\
          $t_{\mathrm{mg}}$ (s)  & - & - & 1135& 1986 & 2.06 h & 6.98 h & -\\\hline
          $t_{\mathrm{mg}}/t_{\mathrm{cg}}$& - & - & 4.71 & 1.51 & 1.07 & 0.59 & -  \\\hline\hline
          \# mg / it., $j_0=3$         & - & - & - & 8.75  & 10 & 12 & 11 \\
          $t_{\mathrm{mg}}$ (s)  & - & - & - & 4155 & 1.78 h & 5.27 h & 16.74 h \\\hline
          $t_{\mathrm{mg}}/t_{\mathrm{cg}}$& - & - & - & 3.16 & 0.92 & 0.45 & 0.31\\\hline\hline
          \# mg / it., $j_0=4$         & - & - & - & -  & 6.25 & 7.75 & 8.33\\
          $t_{\mathrm{mg}}$ (s)  & - & - & - & - & 4.61 h & 7.69 h & 17.44 h\\\hline
          $t_{\mathrm{mg}}/t_{\mathrm{cg}}$& - & - & - & - & 2.39 & 0.66 & 0.32 \\\hline
      \end{tabular}}
      \label{tab:new_method_b6}
\end{center}
\end{table}

First we remark that, for each $\beta$, the SSNM converged in a relatively mesh-independent number of iterations; that number
is also independent of the way we solve the linear systems, assuming they are solved to the given tolerance. In the interest of 
the exposition we do not report the number of SSNM iterations, since the focus is on the linear solves.
We also point out that all cases pass the weak test. This is best seen in Table~\ref{tab:new_method_b6}
for $\beta=10^{-6}$, where we note the average number of 
two-grid iterations decreasing from $9.75$ at $n=512$ to $8.75$ at $n=1024$, and down to $6.25$ at $n=2048$; we did not run
the two-grid preconditioned problem for $n=4096, 8192$. Still for $\beta=10^{-6}$ we see the three-grid  average number of 
iterations decreasing from $11.75$ at $n=1024$ to $10$ at $n=2048$, down to $7.75$ at $n=4096$, and the four-grid 
average number of iterations decreasing from $16$ at $n=2048$, to $12$ at $n=4096$, down to $8.33$ at $n=8192$.

For the strong test the  key issue is the choice of the base case $j_0$ for the multigrid preconditioner. 
The hypotheses of Theorem~\ref{theo:multigrid}
show that the base level has to be sufficiently fine (relative to $\beta$)
in order for MGCG to run efficiently, as shown in~\eqref{eq:coarsestgridcond}. 
In Table~\ref{tab:new_method_b4}, for \mbox{$\beta=10^{-4}$}, the choice $j_0=0\  (n_0=64)$ seems to be sufficiently fine, as the 
MGCG requires fewer and fewer iterations as $n$ increases, as predicted by theory. The \emph{effective efficiency factor}
\emph{eff}=time(MGCG) / time (CG)  is also presented; it is shown 
to decrease with increasing resolution, but it decreases below the value one 
(e.g., MGCG becomes more efficient than CG) only at higher resolution, 
as expected. For example, at $n=2048$, while CG required an average number of 12 iterations per SSNM iteration (actually 
exactly 12 at each iteration), the 5-grid MGCG required an average of 3 iterations per SSNM iterations. In terms 
of wall-clock time, the linear solves for MGCG required 0.65 of the wall-clock time of CG.
The situation is somewhat similar for $\beta=10^{-5}$ (Table~\ref{tab:new_method_b5}), except for the fact 
that $j_0=0$ turns out to be borderline acceptable, in that
the average number of  MGCG iterations does not decrease with increasing resolution right from the beginning, so $j_0=0$
does not pass the strong test. Instead, the case $j_0=1$ $(n_1=128)$ clearly passes the strong test with the exception
of the mild increase in number of iterations from two-grid to three-grid. Also, 
the efficiency factor decreases to $0.38$ at $n=4096$ (with  a five-grid preconditioner), and further down to
$0.26$ at $n=8192$ (with  a six-grid preconditioner).
Finally, for $\beta=10^{-6}$ (Table~\ref{tab:new_method_b6}) we see that neither of the values 
$j_0=2, 3, 4$ give rise to the expected decrease in the number of iterations
for the MGCG, at least not for small number of levels, thus failing the strong test.
However, for high-resolution computations  MGCG is still more 
efficient than CG: for example, a five-grid MGCG based solve at $n=8192$ ($j_0=3$)
requires an average of 11 inner iteration per SSNM iteration and $0.31$ of the time needed 
for the 34 inner CG iterations per SSNM iteration.

\subsection{Image deblurring with box constraints}
\label{ssec:numerics_deblur}
For the second application 
we define the restricted Gaussian blurring operator for functions $u\in L^1(\R^2)$  by
\beq
\label{eq:blurdef}
\op{K}^{\sigma, w}u(x)=\frac{\alpha_w^{-1}}{2 \pi}\int_{\abs{x-y}_{\infty}<w}G_{\sigma}(x-y)u(y)dy,
\eeq
where $\sigma, w>0$, $\abs{x}_{\infty}=\max(\abs{x_1}, \abs{x_2})$, $\abs{x}$ is the Euclidean norm,
\beqs
G_{\sigma}(x)=\sigma^{-2} e^{-\frac{|x|^2}{2\sigma^2}},\ \ \ \ \alpha_w = \frac{1}{2 \pi}\int_{\abs{x}_{\infty}<w} G_{\sigma}(x) dx\ .
\eeqs
Note that $\lim_{w\to \infty} \alpha_w=1$. 
Here $u:\R^2\to [0,1]$ is a function representing a grey-scale image. 
If $D_w$ denotes the square $(-w,w)\times(w,w)$ and $\chi_{D_w}$ is its characteristic function, then 
$$
\op{K}^{\sigma, w} u  = \alpha_w^{-1}\left(G_{\sigma}\cdot \chi_{D_w}\right) * u,
$$
where the convolution is defined using the rescaled Lebesgue measure $(2 \pi)^{-1}$ (see Appendix~\ref{sec:appendixdeblur}).
We remark that in the usual definition of Gaussian blurring, the domain of integration in~\eqref{eq:blurdef}
is the entire space $\R^2$. In practice, however, the integral is restricted as shown in~\eqref{eq:blurdef}, the
usual choice being $w=3\sigma$, and the ``image'' $u$ is restricted to a bounded domain~$\Omega$. As in the previous section, we
consider \mbox{$\Omega=(0,1)\times (0,1)$}, which mainly allows for two options for defining $\op{K}^{\sigma, w}$ on $L^1(\Omega)$: 
first we can extend $u\in L^1(\Omega)$ with zero outside $\Omega$, case in which $\op{K}^{\sigma, w}u$ is defined on the entire 
space~$\R^2$; furthermore, we restrict $\op{K}^{\sigma, w}u$ to $\Omega$. This gives rise, as shown in Appendix~\ref{sec:appendixdeblur}, to 
a bounded operator $\op{K}^{\sigma, w}\in \mathfrak{L}(L^2(\Omega))$. The second option is to {\bf not} extend $u\in L^1(\Omega)$ outside of 
$\Omega$, which natually results in an operator $\widehat{\op{K}}^{\sigma, w}\in \mathfrak{L}(L^2(\Omega), L^2(\Omega_w))$, where
$\Omega_w=(w,1-w)\times(w,1-w)$ (we require $0<w<1/2$). In our numerical experiments we used a discretization 
of $\widehat{\op{K}}^{\sigma, w}$, while, for convenience, we conduct the analysis in Appendix~\ref{sec:appendixdeblur}  for the former case, 
namely $\op{K}^{\sigma, w}$.
For the remainder of this section we discard the superscripts $\sigma, w$, i.e., $\op{K}=\op{K}^{\sigma, w}$.

In our numerical solution of the optimization problem~\eqref{eq:contprob}
we discretize $u$, as before, using piecewise constant functions on a uniform 
$n\times n$ grid on $\Omega$. With $h=1/n$ being the grid size, we compute the discrete 
version $\op{K}_hu$  (representing the blurred image) at the cell centers 
using a cubature rule to integrate~\eqref{eq:blurdef} numerically. Essentially, the value
of $(\op{K}_h u)$ at a node $z_k$ (the center of an element) 
is a weighted average of the values of $u$ in all squares that are at most $w_h$ away from $z_k$ (in the $\abs{\:\cdot\:}_{\infty}$-distance), 
with the weights being computed 
using the function $G_{\sigma}$ and rescaled to add up to 1; $w_h$ is a discrete version of $w$. The details of the
discretization, as well as the verification of Condition~\ref{cond:condsmooth}, are given in Appendix~\ref{sec:appendixdeblur}.
As customary in Gaussian filtering, the separability of the kernel $G$ allows for a more efficient
implementation, namely  $$\op{K}_h=\op{K}^{x_1}_h \op{K}^{x_2}_h = \op{K}^{x_2}_h \op{K}^{x_1}_h\ ,$$
where $\op{K}^{x_1}_hu$ (resp., $\op{K}^{x_2}_hu$) 
defines the application of a Gaussian filter to the image $u$ in the $x_1$-direction only (resp., $x_2$-direction). 

The setup and result presentation is similar to the experiments presented in Section~\ref{ssec:numerics_ell}.
We consider the case when $w=0.1$ and $\sigma=w/3$.
Again, we define the data by $y_d=\op{K} u_d$, where the target control $u_d$, i.e., the original image (shown
as a surface), is the same step function as in the previous experiment.
In Figure~\ref{fig:udyd64} we show both $u_d$ (left) and the blurred image $y_d=\op{K} u_d$ (right) as surfaces.
For the constrained optimization problem we use the constant constraints $a(x)=0$ and $b(x)=1$.
In Figures~\ref{fig:umindeblrcon64v1} and~\ref{fig:umindeblrcon64v2} we show the solutions of the constrained problem for
$\beta=0.04, 0.02, 0.01, 0.005$.

\begin{figure}[!htb]
         \includegraphics[width=5.3in]{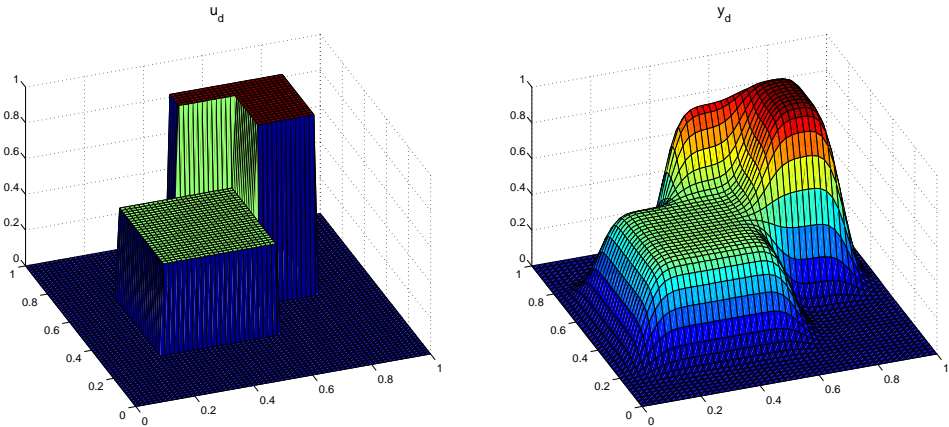}
\caption{Left: target control $u_d$. Right: blurred image/surface $y_d$.}
\label{fig:udyd64}
\end{figure}
 
\begin{figure}[!htb]
        \includegraphics[width=5.3in]{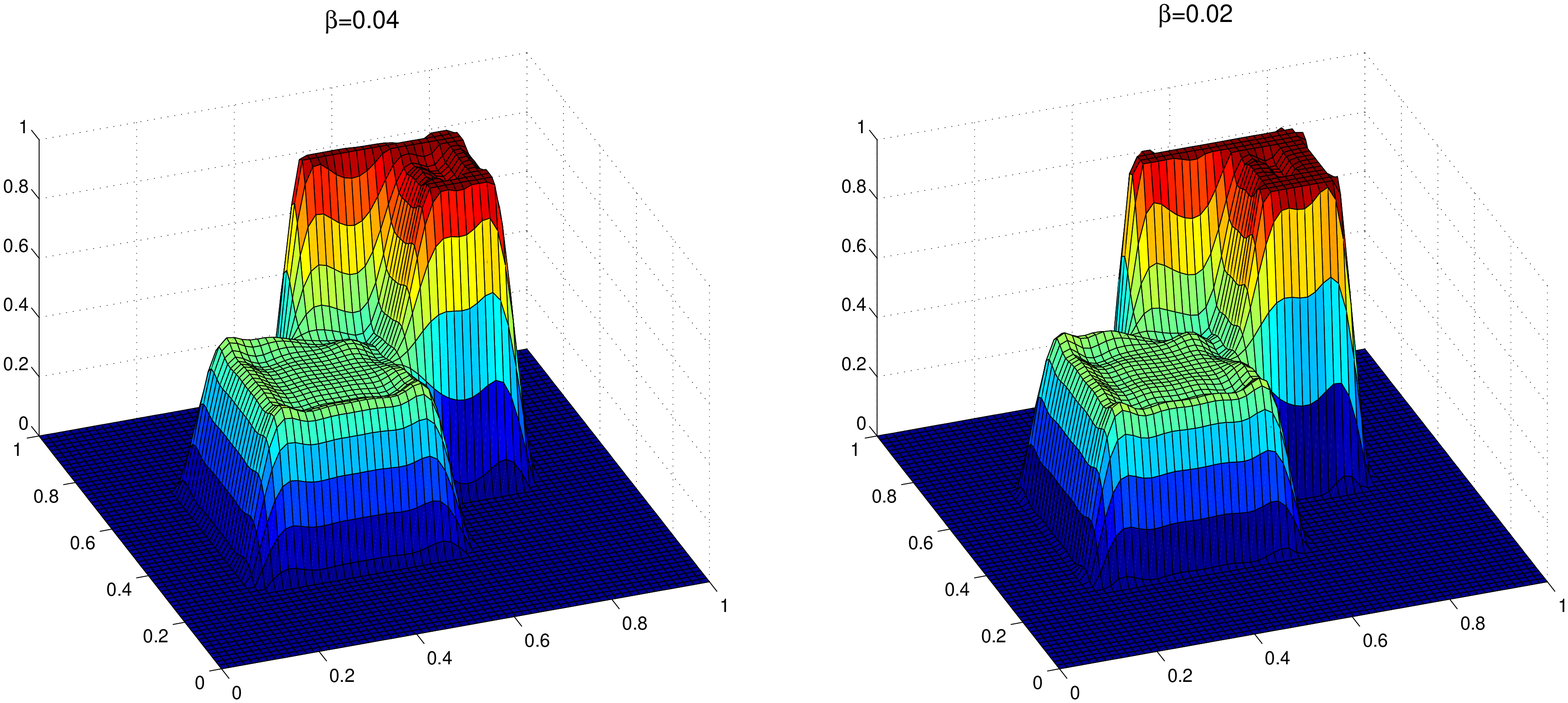}
\caption{Left: optimal control $u_{\min}$ for $\beta=0.04$. Right: optimal control $u_{\min}$ for $\beta=0.02$.}
\label{fig:umindeblrcon64v1}
\end{figure}
 
\begin{figure}[!htb]
        \includegraphics[width=5.3in]{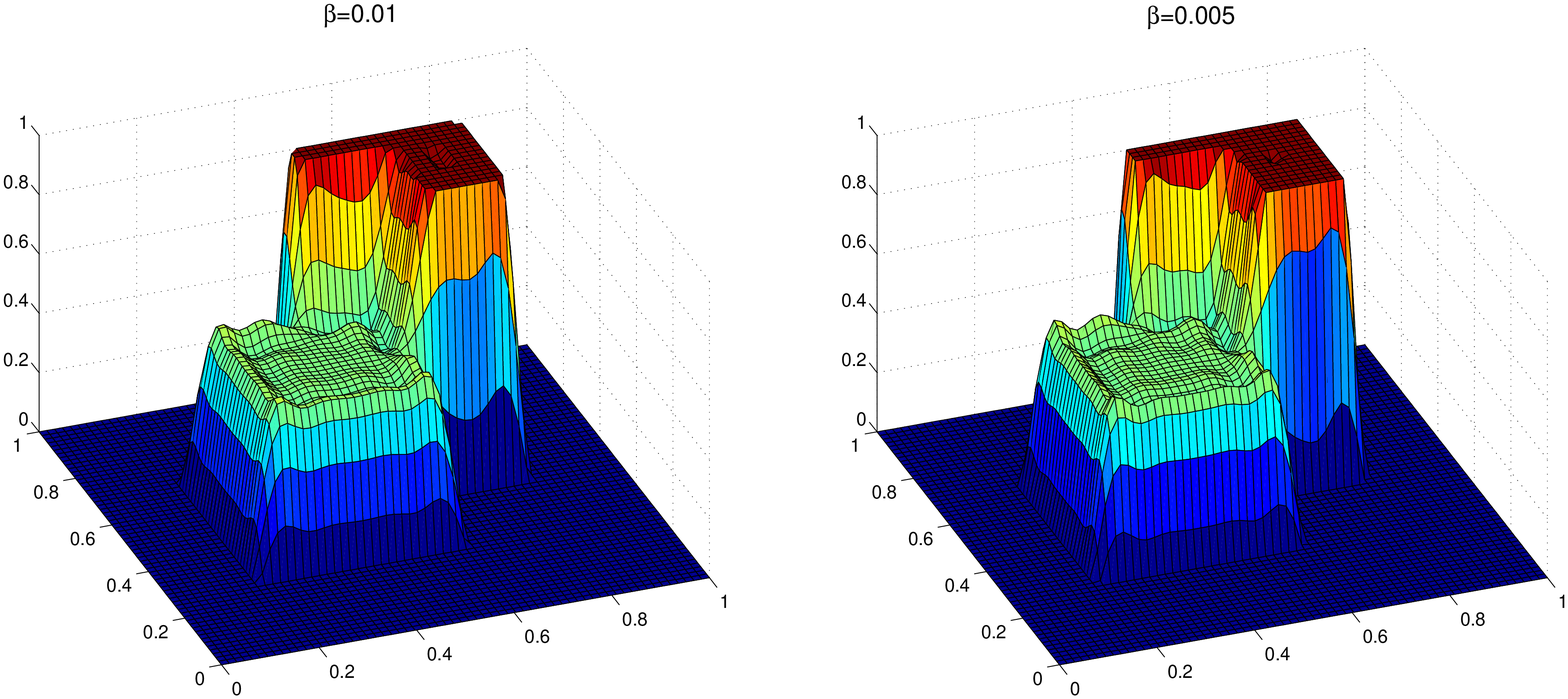}
\caption{Left: optimal control $u_{\min}$ for $\beta=0.01$. Right: optimal control $u_{\min}$ for $\beta=0.005$.}
\label{fig:umindeblrcon64v2}
\end{figure}

\begin{table}[!h]
  \begin{center}
 \caption{Comparison of iteration counts and runtimes for multigrid vs. unpreconditioned CG for image deblurring; 
   $w=0.1, \beta=0.04, 0.02$.}
      {\begin{tabular}{|@{}c|c|c|c|c|}\hline
          $n_j$ & $256$ & $512$ &  $1024$ & $2048$ \\\hline
          \multicolumn{5}{|c|}{$\beta=0.04$}\\\hline
          \# cg / it.      & 40  & 40  & 40  & 40 \\
          $t_{\mathrm{cg}}$ (s)   & 3.6  & 25  & 215  & 4023 \\\hline
          \# mg / it., $j_0=0$ & 12.2  & 14.5  & 21   & 12.5  \\
          $t_{\mathrm{mg}}$ (s)   & 7.2  & 18 &  149 & 1442    \\\hline
          \# mg / it., $j_0=1$ & -  & 9.25 & 11.5 & 10.25 \\
          $t_{\mathrm{mg}}$ (s)   &  - & 39 & 101  &  1240   \\\hline
          \# mg / it., $j_0=2$ & -  & - & 7.5   & 8.75 \\
          $t_{\mathrm{mg}}$ (s)   &  - & - & 290  &  1347   \\\hline
          \# mg / it., $j_0=3$ & -  & - & -  & 6.5 \\
          $t_{\mathrm{mg}}$ (s)   &  - & - & -  &  2665   \\\hline
          \multicolumn{5}{|c|}{$\beta=0.02$}\\\hline
          \# cg / it.   &  51.2  &  51  & 51  & 51  \\
          $t_{\mathrm{cg}}$ (s)   & 4.6  & 32  & 270  & 5354 \\\hline
          \# mg / it., $j_0=0$   & 15.8  & 18.75  & 61.75  & 57 \\
          $t_{\mathrm{mg}}$ (s)   & 12  & 23  & 450  &  6256 \\\hline
           \# mg / it., $j_0=1$   & -  &  11 & 14  & 23.75 \\
          $t_{\mathrm{mg}}$ (s)   & -  & 53  & 142  &  2797 \\\hline
            \# mg / it., $j_0=2$   & -  &  - & 9.5  & 11.75 \\
          $t_{\mathrm{mg}}$ (s)   & -  & -  & 418  &  1682 \\\hline
            \# mg / it., $j_0=3$   & -  &  - &   & 7.25  \\
          $t_{\mathrm{mg}}$ (s)   & -  & -  & -  &  3072 \\\hline
   \end{tabular}}
      \label{tab:new_method_deblur_june7}
\end{center}
\end{table}

   \begin{table}[!h]
  \begin{center}
 \caption{Comparison of iteration counts and runtimes for multigrid vs. unpreconditioned CG for image deblurring; 
   $w=0.1, \beta=0.01, 0.005$.}
      {\begin{tabular}{|@{}c|c|c|c|c|}\hline
          $n_j$ & $256$ & $512$ &  $1024$ & $2048$ \\\hline
          \multicolumn{5}{|c|}{$\beta=0.01$}\\\hline
          \# cg / it.      & 65.8   & 66  & 66  & 66  \\
          $t_{\mathrm{cg}}$ (s)   & 6  & 60  & 446  & 6576 \\\hline
          \# mg / it., $j_0=0$   &  20.8 & 25.2  & $>100$  & $>100$ \\
          $t_{\mathrm{mg}}$ (s)   & 18  &  46 &   &  \\\hline
           \# mg / it., $j_0=1$   &  - & 15.4  & 19.8   & $>100$  \\
          $t_{\mathrm{mg}}$ (s)   & -  &  119 & 279  &   \\\hline
            \# mg / it., $j_0=2$   &  - & -  & 11.8  & 15.5 \\
          $t_{\mathrm{mg}}$ (s)   & -  &  - & 846  & 2316 \\\hline
             \# mg / it., $j_0=3$   &  - & -  & -  & 9 \\
          $t_{\mathrm{mg}}$ (s)   & -  &  - & -  & 4390  \\\hline
          \multicolumn{5}{|c|}{$\beta=0.005$}\\\hline
          \# cg / it.      & 84.67   & 84.25 & 84.6 & 84.74 \\
          $t_{\mathrm{cg}}$ (s)   &  9 & 52  & 560  & 8926 \\\hline
          \# mg / it., $j_0=0$   & 31.5 & 37.5   & failed  & failed \\
          $t_{\mathrm{mg}}$ (s)   &  42 & 64  &  - &  - \\\hline
          \# mg / it., $j_0=1$   & -  & 19.75   &  26.2 & failed \\
          $t_{\mathrm{mg}}$ (s)   &  -  & 152  &  402 &  - \\\hline
          \# mg / it., $j_0=2$   & -  & -  & 15.2  & 20.25  \\
          $t_{\mathrm{mg}}$ (s)   & -  & -  & 1218  &  3260 \\\hline
          \# mg / it., $j_0=3$   & -  & -  & -  & 11.5  \\
          $t_{\mathrm{mg}}$ (s)   & -  & -  & -  &  6779 \\\hline
 \end{tabular}}
      \label{tab:new_method_deblur2_june7}
\end{center}
\end{table}

For the multigrid solves we consider the cases
$$n_j=128\times 2^j, \ \ j=0,\dots, 4.$$
We report the results for $\beta=0.04,~0.02$ in Table~\ref{tab:new_method_deblur_june7} and for
$\beta=0.01,~0.005$ in Table~\ref{tab:new_method_deblur2_june7}, but we no longer report the 
effective efficiency factor.

The results are essentially similar with the elliptic-constrained experiments. All cases clearly pass the
weak test. However, the only case where there is a hint of the strong test being passed is for $\beta=0.04$ 
(top half of Table~\ref{tab:new_method_deblur_june7}); 
for $j_0=0$ we see the average number of iterations first increasing with resolution from $12.2$ ($n_1=256$)  to $14.5$ ($n_2=512$)
up to $21$ ($n_3=1024$), only to decrease to $12.5$ for ($n_4=2048$), all compared to an average number of $40$ CG iterations. This certainly 
reflected in the wall-clock efficiency: the five-level MGCG linear solves required 1442 seconds compared to the 4023 seconds for CG.

As in the elliptic-constrained experiments, by lowering $\beta$  to $0.02$ (bottom half of Table~\ref{tab:new_method_deblur_june7}) 
we also have to raise the base case level in order
for MGCG to run efficiently; here $j_0=2$ seems to be sufficiently fine, but even $j_0=1$ seems to be acceptable, i.e., lead to  
reasonably efficient linear solves. By contrast we see how even lower values for $\beta$ (see Table~\ref{tab:new_method_deblur2_june7})
lead to very slowly convergent
linear solves (see the cases $\beta=0.01$ and $j_0=0, 1$) or even non-convergence
($\beta=0.005$ and $j_0=0, 1$).

\section{Conclusions}
\label{sec:conclusions}
We have developed a multigrid preconditioning technique to be used in connection to 
SSNMs for certain control-constrained distributed optimal control problems.
The multigrid preconditioners exhibit a provably optimal order behavior with respect to the
mesh-size, in that the quality of the preconditioners increases at the optimal rate
with increasing mesh-size, assuming a piecewise constant representation of the control and
a sufficiently fine base level. The technique used in this paper is not limited to control-constrained
problems like~\eqref{eq:contprob}. An immediate  application would be to replace (or add) a
domain-constraint to the control $u$ of the type supp($u)\subseteq \Omega'$, where
$\Omega'\subset \Omega$. Naturally, our method can be also used for PDE-constrained optimization with state
constraints by reducing them to control-constrained problem via Lavrentiev regularization.

A natural question is whether the method can be extended to 
higher order discontinuous piecewise polynomial discretizations  such that the optimality of the
preconditioner is preserved. Following the analysis of the piecewise constant case, it is apparent that
the answer is negative. However, this does not preclude the existence of alternate optimal order
preconditioners for higher order discretizations of the controls. The search for such preconditioners
is subject of ongoing research.

\section*{Acknowledgment} 
The authors thank the anonymous referees for their insightful comments.

\appendix
\section{Verification of Condition~SAC for the restricted Gaussian blurring operator}
\label{sec:appendixdeblur}
In this section we rigorously specify the  discretization for the integral operator $\op{K}^{\sigma, w}$
defined in Section~\ref{ssec:numerics_deblur},
and we show that Condition~\ref{cond:condsmooth} is satisfied. Recall that $\Omega=(0,1)\times (0,1)$
with $0<w<1/2$.

\subsection{Estimates for the continous operator}
\label{ssec:contop}
Due to the definition of $\op{K}^{\sigma, w}$ as a convolution, 
we prefer to verify Condition~\ref{cond:condsmooth}~{\bf [a]} using Fourier transforms. 
Following~\cite{MR1157815}, we consider the normalized Lebesgue measure on $\R^n$ defined by $$dm_n(x)=(2\pi)^{-n/2} dx\ ,$$
and we define the Fourier transform of a function  $f\in L^1(\R^n)$ by
$$
\op{F}_n\lbrack f \rbrack (\xi) =  \int_{\R^n} f(x) e^{-{\bf i}\: \xi\cdot x} dm_n(x)\ .
$$
In this section $L^2$-norms of functions in $\R^n$ or on bounded domains, as well as convolutions, are computed using the measure $dm_n$, i.e.,
$$
\nnorm{f}^2_{L^2(\R^n)} = \int_{\R^n} \abs{f(x)}^2 dm_n(x),\ \ \ (f * g)(x) \eqdef \int_{\R^n} f(x-y)\: g(y) dm_n(y)\ .
$$

\begin{lemma} There exists a constants $C_1, C_2, C_3$ depending only on the ratio $w/\sigma$ so that 
\label{lma:ftlinfnorm}
\beq
\label{eq:ftlinfnorm}
\Abs{\op{F}_1\lbrack \chi_{(-w,w)}(x) \cdot e^{-\frac{x^2}{2 \sigma^2}}\rbrack(\xi)} \le \min\left(C_1 / \abs{\xi},\ \sigma\: C_2\right )\ ,
\ \ \forall \xi\ne 0,
\eeq
and
\beq
\label{eq:ftlinfnorm2}
\Abs{\op{F}_1\lbrack  \chi_{(-w,w)}(x) \cdot e^{-\frac{x^2}{2 \sigma^2}}\rbrack(\xi)} \le \frac{\sigma C_3}{1+\sigma\Abs{\xi}}\ ,
\ \ \forall \xi\in \R.
\eeq
\end{lemma}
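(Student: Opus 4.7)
The plan is to establish the three bounds by elementary Fourier estimates, treating the low-frequency and high-frequency regimes separately and then combining them to produce \eqref{eq:ftlinfnorm2}.

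First I would write out
$$
\op{F}_1\lbrack \chi_{(-w,w)}(x) \cdot e^{-x^2/(2\sigma^2)}\rbrack(\xi) = \frac{1}{\sqrt{2\pi}}\int_{-w}^{w} e^{-x^2/(2\sigma^2)} e^{-\mathbf{i}\,\xi x}\,dx .
$$
The $\sigma C_2$ bound in \eqref{eq:ftlinfnorm} is immediate by taking absolute values under the integral and comparing with the full Gaussian integral $\int_{\R}e^{-x^2/(2\sigma^2)}\,dx = \sigma\sqrt{2\pi}$; this yields $C_2=1$ (in fact independent of~$w/\sigma$).

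For the decaying $C_1/\abs{\xi}$ bound I would integrate by parts once, differentiating the Gaussian factor and integrating $e^{-\mathbf{i}\,\xi x}$. This produces a boundary term of size $2 e^{-w^2/(2\sigma^2)}/\abs{\xi}$ together with an interior integral
$$
\frac{1}{\mathbf{i}\xi\,\sigma^{2}}\int_{-w}^{w} x\,e^{-x^2/(2\sigma^2)}\,e^{-\mathbf{i}\,\xi x}\,dx ,
$$
which is controlled in absolute value by $\frac{1}{\abs{\xi}\sigma^{2}}\int_{\R}\abs{x}e^{-x^2/(2\sigma^2)}\,dx = \frac{2}{\abs{\xi}}$. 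Adding the two contributions gives \eqref{eq:ftlinfnorm} with $C_1 = \bigl(2 + 2e^{-w^2/(2\sigma^2)}\bigr)/\sqrt{2\pi}$.

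Finally, to derive \eqref{eq:ftlinfnorm2} I would split into cases according to whether $\sigma\abs{\xi}\le 1$ or $\sigma\abs{\xi}>1$. In the first regime $1+\sigma\abs{\xi}\le 2$, so the trivial bound $\sigma C_2$ already implies $\sigma C_2 \le 2\sigma C_2/(1+\sigma\abs{\xi})$; in the second regime $1+\sigma\abs{\xi}\le 2\sigma\abs{\xi}$, so $C_1/\abs{\xi} = C_1\sigma/(\sigma\abs{\xi}) \le 2 C_1\sigma/(1+\sigma\abs{\xi})$. Taking $C_3 = 2\max(C_1,C_2)$ combines both cases into a single inequality valid for all $\xi\in\R$ (including $\xi=0$, where the bound is just the trivial $\sigma C_2$). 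The only mild subtlety is making sure the integration-by-parts constants are uniform in $w/\sigma$, which follows from the monotonicity $e^{-w^2/(2\sigma^2)}\le 1$; no step is computationally heavy.
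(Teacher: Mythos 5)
Your proof is correct, and it takes a genuinely different route from the paper's. The paper computes $\op{F}_1\lbrack \chi_{(-w,w)}\rbrack$ and $\op{F}_1\lbrack e^{-x^2/(2\sigma^2)}\rbrack$ separately (a sinc and a Gaussian) and writes the transform of the product as their convolution; the $C_1/\abs{\xi}$ decay is then extracted by splitting the convolution integral $\int e^{-(s-t)^2/2}\,\frac{\sin(\sigma^{-1}wt)}{t}\,dt$ into the regions $\abs{s-t}<s\delta$ and $\abs{s-t}>s\delta$ with $\delta=1/2$, using $\abs{\sin(at)/t}\le\abs{a}$ on the far region and $\abs{1/t}\le 1/(s(1-\delta))$ near $t=s$. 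You instead work directly with the defining integral: the $\sigma C_2$ bound is the trivial $L^1$ estimate (your $C_2=1$ is even cleaner than the paper's $C_2=\sigma^{-1}w\sqrt{2/\pi}$, and both are admissible since they depend only on $w/\sigma$), and the $C_1/\abs{\xi}$ decay comes from a single integration by parts, with the boundary term $2e^{-w^2/(2\sigma^2)}/\abs{\xi}$ and the interior term controlled by $\sigma^{-2}\abs{\xi}^{-1}\int_{\R}\abs{x}e^{-x^2/(2\sigma^2)}dx=2/\abs{\xi}$ — I checked the constants and they are right, and manifestly functions of $w/\sigma$ alone. For the final combination the paper uses the inequality $\min(a_1/\abs{\xi},a_2)\le (a_1+a_2 b)/(b+\abs{\xi})$ with $b=1/\sigma$, yielding $C_3=C_1+C_2$, whereas your two-case split on $\sigma\abs{\xi}\lessgtr 1$ yields $C_3=2\max(C_1,C_2)$; these are interchangeable. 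The net effect is that your argument is more elementary (no convolution theorem, no delicate domain splitting) and delivers explicit constants, while the paper's version generalizes more readily if one wants to track how the transform of the truncated Gaussian deviates from that of the full Gaussian. No gaps.
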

\begin{proof}
Cf.~\cite{MR1157815}, for $\xi\in \R$ the following hold:
\beqs
\op{F}_1\lbrack \chi_{(-w,w)}(x)\rbrack (\xi)&=&\sqrt{\frac{2}{\pi}}\: \frac{\sin (w\: \xi)}{\xi}\ \ \mathrm{and}\ \ 
\op{F}_1\lbrack e^{-\frac{x^2}{2 \sigma^2}}\rbrack(\xi) = \sigma \: e^{-\frac{(\sigma \xi)^2}{2}}\ ,
\eeqs
where $\sin(w\xi)/\xi$ is  continued analytically at $\xi=0$. It follows that
\beq
\nonumber
\lefteqn{ (\sigma^{-1}\: \pi)\Abs{\op{F}_1\lbrack \chi_{(-w,w)}(x)\cdot e^{-\frac{x^2}{2 \sigma^2}}\rbrack(\xi)}}\\
\nonumber
& = &\sqrt{2 \pi}\: \Abs{e^{-\frac{(\sigma \xi)^2}{2}} *  \frac{\sin (w\: \xi)}{\xi}}
= \Abs{\int_{-\infty}^{\infty}e^{-\frac{(\sigma (\xi-\zeta))^2}{2}} \cdot \frac{\sin(w\:\zeta)}{\zeta} d\zeta}\\
&\underset{\sigma \xi = s}{\overset{\sigma\zeta=t}{=}}&
\Abs{\int_{-\infty}^{\infty}e^{-\frac{(s-t)^2}{2}} \cdot \frac{\sin(\sigma^{-1}w t)}{t} dt}\ .
\label{eq:A3}
\eeq
Since $\Abs{\sin(a t)/t} \le \abs{a}$, we obtain
\beqs
(\sigma^{-1}\: \pi) \Abs{\op{F}_1\lbrack \chi_{(-w,w)}(x)\cdot e^{-\frac{x^2}{2 \sigma^2}}\rbrack(\xi)}
&\le &
\sigma^{-1}w \Abs{\int_{-\infty}^{\infty}e^{-\frac{(s-t)^2}{2}} dt} = \sqrt{2 \pi}\sigma^{-1}w\ ,
\eeqs
so in~\eqref{eq:ftlinfnorm} we can take $C_2=\sigma^{-1}w \sqrt{2/\pi} $. For computing $C_1$, let $\delta\in (0,1)$, and recall
$\sigma\: \xi= s$. Without loss of generality assume $\xi>0$. Continuing from~\eqref{eq:A3},
\beqs
 \lefteqn{(\sigma^{-1}\: \pi) \Abs{\op{F}_1\lbrack \chi_{(-w,w)}(x)\cdot e^{-\frac{x^2}{2 \sigma^2}}\rbrack(\xi)} }\\
&\le & 
\Abs{\int_{\abs{s-t}<s\: \delta}e^{-\frac{(s-t)^2}{2}} \cdot \frac{\sin(\sigma^{-1}w t)}{t} dt} + 
\Abs{\int_{\abs{s-t}>s\: \delta}e^{-\frac{(s-t)^2}{2}} \cdot \frac{\sin(\sigma^{-1}w t)}{t} dt}\\
& \le & 
\frac{1}{s (1-\delta)}\int_{\abs{s-t}<s\: \delta}e^{-\frac{(s-t)^2}{2}} dt + 
(\sigma^{-1} w)\int_{\abs{s-t}>s\: \delta}e^{-\frac{(s-t)^2}{2}} dt\\
& \stackrel{s-t=u}{\le} &
\frac{1}{s (1-\delta)} \int_{-\infty}^{\infty}e^{-\frac{u^2}{2}} du + (\sigma^{-1} w)\int_{\abs{u}>s\: \delta}e^{-\frac{u^2}{2}} du\\
&\le& \frac{\sqrt{2 \pi}}{s (1-\delta)} + 2 (\sigma^{-1} w)\int^{\infty}_{s\: \delta}e^{-\frac{s \delta}{2}u}  du
=\frac{\sqrt{2 \pi}}{s (1-\delta)} + 4 \frac{(\sigma^{-1} w)}{s\:\delta}e^{-\frac{(s \delta)^2}{2}}\\
&\le & \left(\sigma \xi\right )^{-1}\left(\frac{\sqrt{2 \pi}}{1-\delta} + \frac{4\sigma^{-1} w}{\delta} \right)\ .
\eeqs
The choice $\delta=1/2$ shows that in~\eqref{eq:ftlinfnorm} we can take
$C_1=2\pi^{-1}\left({\sqrt{2 \pi}} + 4\sigma^{-1} w \right)$.
It is easy to see that for $a_1, a_2, b>0$
\beq
\label{eq:elemineq}
\min\left(\frac{a_1}{\Abs{\xi}},a_2\right)\le \frac{a_1 + a_2 b }{b+\Abs{\xi}}\ .
\eeq
Hence, the inequality~\eqref{eq:ftlinfnorm2} follows from~\eqref{eq:ftlinfnorm} 
by substituting $a_1=C_1$, $a_2 = \sigma C_2$, and $b=1/\sigma$ in~\eqref{eq:elemineq},
with $C_3=C_1 +C_2$.
\end{proof}

The next Lemma shows that $\op{K}^{\sigma, w}$ satisfies Condition~\ref{cond:condsmooth}~{\bf [a]} (recall that the operator is symmetric).
\begin{lemma} There exists a constant $C>0$ depending on the ratio $w /\sigma$ so that 
\label{lma:deblurH1est}
\beq
\label{eq:deblurL2est}
\nnorm{\op{K}^{\sigma, w} u}_{L^2(\R^2)} & \le & C \alpha_w^{-1} \nnorm{u},\ \ \forall u\in L^2(\R^2)\ ,\\
\label{eq:deblurL2estdelta}
\nnorm{\op{K}^{\sigma, w} u}_{L^2(\Omega)} & \le & C \alpha_w^{-1}  \nnorm{u},\ \ \forall u\in L^2(\Omega)\ ,\\
\label{eq:deblurH1est}
\nnorm{\op{K}^{\sigma, w} u}_{H^1(\R^2)} & \le & \alpha_w^{-1}\left(C \max(1,\sigma^{-1})\right) \nnorm{u},\ \ \forall u\in L^2(\R^2)\ ,\\
\label{eq:deblurH1estdelta}
\nnorm{\op{K}^{\sigma, w} u}_{H^1(\Omega)} & \le & \alpha_w^{-1} \left(C \max(1,\sigma^{-1})\right) \nnorm{u},\ \ \forall u\in L^2(\Omega)\ .
\eeq
\end{lemma}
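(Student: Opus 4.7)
The plan is to derive all four bounds via Fourier analysis, capitalizing on the sharp estimate in Lemma~\ref{lma:ftlinfnorm}. I first observe that $\op{K}^{\sigma,w}u = \alpha_w^{-1}(k*u)$ with $k(x)=\chi_{D_w}(x)G_\sigma(x)$, and that $k$ is separable: $k(x)=\sigma^{-2}k_1(x_1)k_1(x_2)$ with $k_1(t)=\chi_{(-w,w)}(t)e^{-t^2/(2\sigma^2)}$. Consequently $\op{F}_2[k](\xi)=\sigma^{-2}\op{F}_1[k_1](\xi_1)\op{F}_1[k_1](\xi_2)$, and~\eqref{eq:ftlinfnorm2} yields the key pointwise estimate
\[
\abs{\op{F}_2[k](\xi)}\le \frac{C_3^2}{(1+\sigma\abs{\xi_1})(1+\sigma\abs{\xi_2})}\le C_3^2\ .
\]

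For~\eqref{eq:deblurL2est} I then invoke Plancherel's identity (with the normalized measure $dm_n$ the convolution-to-multiplication conversion carries no extra factor) to obtain $\nnorm{\op{K}^{\sigma,w}u}_{L^2(\R^2)}=\alpha_w^{-1}\nnorm{\op{F}_2[k]\,\hat u}_{L^2(\R^2)}\le \alpha_w^{-1}C_3^2\,\nnorm{u}_{L^2(\R^2)}$. For~\eqref{eq:deblurH1est}, since $\op{F}_2[\partial_{x_i}\op{K}^{\sigma,w}u](\xi)=i\xi_i\alpha_w^{-1}\op{F}_2[k](\xi)\hat u(\xi)$, I combine the factor $\abs{\xi_i}$ with the $i$-th factor of the pointwise bound above to get $\abs{\xi_i}\abs{\op{F}_2[k](\xi)}\le C_3^2\abs{\xi_i}/(1+\sigma\abs{\xi_i})\le C_3^2\sigma^{-1}$. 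Plancherel then produces $\nnorm{\partial_{x_i}\op{K}^{\sigma,w}u}_{L^2(\R^2)}\le \alpha_w^{-1}C_3^2\sigma^{-1}\nnorm{u}_{L^2(\R^2)}$, so combining with the $L^2$-bound yields~\eqref{eq:deblurH1est} with a constant of the form $C\max(1,\sigma^{-1})$, the maximum merely accommodating the two different scalings ($1$ from the zeroth-order term and $\sigma^{-1}$ from the derivative terms).

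The localized estimates~\eqref{eq:deblurL2estdelta} and~\eqref{eq:deblurH1estdelta} follow by a standard extension-restriction argument: given $u\in L^2(\Omega)$, extend by zero to $\tilde u\in L^2(\R^2)$ with $\nnorm{\tilde u}_{L^2(\R^2)}=\nnorm{u}_{L^2(\Omega)}$, apply the previously proven $\R^2$-estimates to $\op{K}^{\sigma,w}\tilde u$, and restrict the result to $\Omega\subset\R^2$, an operation that can only decrease the $L^2$- and $H^1$-norms. I anticipate no substantive obstacle in this proof: Lemma~\ref{lma:ftlinfnorm} has absorbed the delicate one-dimensional analysis of the truncated Gaussian, and the separability of the two-dimensional kernel reduces the multivariate estimate to a pure product of one-dimensional ingredients. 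The only item requiring mild care is consistent bookkeeping of the normalized measure $dm_n$ and the factor $\alpha_w^{-1}$.
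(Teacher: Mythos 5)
Your proposal is correct and follows essentially the same route as the paper: separability of the kernel, the pointwise bound~\eqref{eq:ftlinfnorm2} from Lemma~\ref{lma:ftlinfnorm}, Plancherel with the normalized measure, and extension-by-zero plus restriction for the $\Omega$-versions. The only cosmetic difference is that you bound each derivative multiplier $\abs{\xi_i}\abs{\op{F}_2[k](\xi)}$ separately, whereas the paper bounds the full weight $(1+\abs{\xi}^2)^{1/2}\abs{\op{F}_2[k](\xi)}$ at once via the elementary inequality~\eqref{eq:Asigineq}; both yield the same $C\max(1,\sigma^{-1})$ constant.
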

\begin{proof}
Cf.~\cite{MR1157815}, an equivalent $H^k$-norm of a function $v$ on $\R^2$ is given by
$$
\nnorm{v}_{H^k(\R^2)} =  
\nnorm{L_k \cdot \op{F}_2 \lbrack v\rbrack},
$$
where $L_k(\xi) = (1+\abs{\xi}^2)^{\frac{k}{2}}$ for $k\ge 0$. Hence, for $u\in L^2(\R^2)$ and $k=0, 1$
\beq
\nonumber
\alpha_w \nnorm{\op{K}^{\sigma, w} u}_{H^k(\R^2)}& = & \nnorm{L_k \cdot\op{F}_2 \lbrack \left(G_{\sigma}\cdot \chi_{D_w}\right) * u\rbrack}
 =  \nnorm{L_k \cdot\op{F}_2 \lbrack G_{\sigma}\cdot \chi_{D_w}\rbrack \cdot \op{F}_2 \lbrack u\rbrack}\\
\label{eq:plancherel}
& \le & \nnorm{L_k \cdot\op{F}_2 \lbrack G_{\sigma}\cdot \chi_{D_w}\rbrack}_{L^{\infty}(\R^2)} \cdot 
\nnorm{\op{F}_2 \lbrack u\rbrack}\ .
\eeq
By the Plancherel Theorem, $\nnorm{\op{F}_2 \lbrack u\rbrack} = \nnorm{u}$; hence, it remains to estimate the quantity
\beq
\nnorm{L_k \cdot\op{F}_2 \lbrack G_{\sigma}\cdot \chi_{D_w}\rbrack}_{L^{\infty}(\R^2)}.
\eeq
The separability 
$$
(G_{\sigma}\cdot  \chi_{D_w})(x_1,x_2) =  \sigma^{-2}
\left(e^{-\frac{{x_1}^2}{2\sigma^2}} \chi_{(-w,w)}(x_1)\right) \cdot \left(e^{-\frac{{x_2}^2}{2\sigma^2}} \chi_{(-w,w)}(x_2)\right)
$$
implies that
\beqs
\op{F}_2 \lbrack G_{\sigma}\cdot \chi_{D_w}\rbrack (\xi_1, \xi_2)& = 
\sigma^{-2} \op{F}_1 \lbrack e^{-\frac{{x_1}^2}{2\sigma^2}} \chi_{(-w,w)}(x_1) \rbrack(\xi_1) \cdot 
\op{F}_1 \lbrack e^{-\frac{{x_2}^2}{2\sigma^2}} \chi_{(-w,w)}(x_2) \rbrack(\xi_2) .
\eeqs
The case $k=0$ is easy, since by~\eqref{eq:ftlinfnorm2} 
\beqs
\nnorm{L_0\cdot\op{F}_2 \lbrack G_{\sigma}\cdot \chi_{D_w}\rbrack}_{L^{\infty}(\R^2)} =
\nnorm{\op{F}_2 \lbrack G_{\sigma}\cdot \chi_{D_w}\rbrack}_{L^{\infty}(\R^2)} 
\le (C_3)^2\ ,
\eeqs
which, in light of~\eqref{eq:plancherel},  proves~\eqref{eq:deblurL2est}, and hence~\eqref{eq:deblurL2estdelta}. 

For $k=1$, we have
\beqs
\Abs{\left(L_1 \cdot\op{F}_2 \lbrack G_{\sigma}\cdot \chi_{D_w}\right)(\xi_1,\xi_2)\rbrack}& = &
\Abs{(1+\xi_1^2+\xi_2^2)^{\frac{1}{2}}\op{F}_2 \lbrack G_{\sigma}\cdot \chi_{D_w}\rbrack (\xi_1, \xi_2)}\\
& \stackrel{\eqref{eq:ftlinfnorm2}}{\le} & (C_3)^2\frac{\sqrt{1+\xi_1^2+\xi_2^2}}{(1+\sigma\abs{\xi_1})(1+\sigma \abs{\xi_2})}\\
&\le & (C_3)^2\max(1,\sigma^{-1})\ .
\eeqs
The latter inequality follows from 
\beq
\label{eq:Asigineq}
\frac{\sqrt{A+t^2}}{1+\sigma t}\le \max(\sqrt{A},\sigma^{-1}),\ \ \forall \ A, \sigma >0,\ t\ge 0\ .
\eeq
Namely, for all $\xi_1, \xi_2\in \R$,
\beqs
\frac{\sqrt{1+\xi_1^2+\xi_2^2}}{(1+\sigma\abs{\xi_1})(1+\sigma \abs{\xi_2})} &\stackrel{\eqref{eq:Asigineq}}{\le}&
\frac{1}{1+\sigma \abs{\xi_2}}\max\left( \sqrt{1+\xi_2^2},\sigma^{-1}\right)\\
&\stackrel{\eqref{eq:Asigineq}}{\le}& 
\max\left(\max\left(1,\sigma^{-1} \right), \frac{\sigma^{-1}}{1+\sigma \abs{\xi_2}} \right) = \max(1,\sigma^{-1})\ .
\eeqs
This proves that
\beq
\nnorm{L_1 \cdot\op{F}_2 \lbrack G_{\sigma}\cdot \chi_{D_w}\rbrack}_{L^{\infty}(\R^2)} \le (C_3)^2 \max(1,\sigma^{-1}) ,
\eeq
thus showing that~\eqref{eq:deblurH1est} holds with $C=(C_3)^2$.

Given $u\in L^2(\Omega)$, we consider its extension (still denoted $u$) with $0$ outside $\Omega$, and we 
apply the inequality~\eqref{eq:deblurH1est} to obtain
\beqs
\nnorm{\op{K}^{\sigma, w} u}_{H^1(\Omega)} &\le& \nnorm{\op{K}^{\sigma, w} u}_{H^1(\R^2)} \\ 
&\le& 
 \alpha_w^{-1}\left(C \max(1,\sigma^{-1})\right) \nnorm{u}_{L^2(\R^2)} = \alpha_w^{-1}\left(C \max(1,\sigma^{-1})\right) \nnorm{u}_{L^2(\Omega)}\ ,
\eeqs
which proves~\eqref{eq:deblurH1estdelta}.
\end{proof}
\begin{lemma}
\label{lma:wsensitivity}
If $0<w_0\le w_1<w_2<1/2$, then
\beq
\label{eq:wsensitivity1}
\nnorm{(\op{K}^{\sigma, w_1} - \op{K}^{\sigma, w_2})u}_{L^2(\R^2)} &\le & C\: (w_2-w_1) \nnorm{u}_{L^2(\R^2)}\ ,\\
\label{eq:wsensitivity2}
\nnorm{(\op{K}^{\sigma, w_1} - \op{K}^{\sigma, w_2})u}_{L^2(\Omega)} & \le & C \: (w_2-w_1) \nnorm{u}_{L^2(\Omega)}\ ,
\eeq
where the constant $C$ only depends on $\sigma$ and $w_0$.
\end{lemma}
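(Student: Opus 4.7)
The plan is to split the difference $\op{K}^{\sigma,w_1} - \op{K}^{\sigma,w_2}$ into two pieces according to where $w_1$ and $w_2$ appear:
\begin{equation*}
\op{K}^{\sigma,w_1}u - \op{K}^{\sigma,w_2}u \;=\; \alpha_{w_1}^{-1}\bigl[(G_\sigma\chi_{D_{w_1}}) - (G_\sigma\chi_{D_{w_2}})\bigr]*u \;+\; \bigl(\alpha_{w_1}^{-1}-\alpha_{w_2}^{-1}\bigr)(G_\sigma\chi_{D_{w_2}})*u.
\end{equation*}
First I would apply Young's convolution inequality $\nnorm{f*u}_{L^2(\R^2)}\le \nnorm{f}_{L^1(\R^2)}\nnorm{u}_{L^2(\R^2)}$ (taken with respect to the normalized measure $dm_2$) to the first term. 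Since $\chi_{D_{w_1}}-\chi_{D_{w_2}} = -\chi_{D_{w_2}\setminus D_{w_1}}$, we have to estimate the $L^1$-norm of $G_\sigma\chi_{D_{w_2}\setminus D_{w_1}}$. Bounding $G_\sigma \le \sigma^{-2}$ pointwise and using $\mu(D_{w_2}\setminus D_{w_1}) = 4(w_2^2-w_1^2) \le 4(w_2-w_1)$ since $w_1+w_2<1$, this piece contributes a constant of order $\sigma^{-2}\alpha_{w_1}^{-1}(w_2-w_1)$, and since $\alpha_{w_1}\ge \alpha_{w_0}>0$ by monotonicity, the constant depends only on $\sigma$ and $w_0$.

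For the second term, the same pointwise bound yields
\begin{equation*}
\abs{\alpha_{w_2}-\alpha_{w_1}} \;=\; \frac{1}{2\pi}\int_{D_{w_2}\setminus D_{w_1}} G_\sigma(x)\,dx \;\le\; C(\sigma)(w_2-w_1),
\end{equation*}
hence $\abs{\alpha_{w_1}^{-1}-\alpha_{w_2}^{-1}} \le (\alpha_{w_0})^{-2} C(\sigma)(w_2-w_1)$. Combining this with the $L^2$-boundedness of $\op{K}^{\sigma,w_2}$ already established in Lemma~\ref{lma:deblurH1est} (inequality~\eqref{eq:deblurL2est}) controls the second term by $C(w_2-w_1)\nnorm{u}_{L^2(\R^2)}$, again with a constant depending only on $\sigma$ and $w_0$. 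Adding the two bounds gives~\eqref{eq:wsensitivity1}.

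The second inequality~\eqref{eq:wsensitivity2} follows from the first exactly as in the proof of Lemma~\ref{lma:deblurH1est}: given $u\in L^2(\Omega)$, extend it by zero to all of $\R^2$, apply~\eqref{eq:wsensitivity1}, and then use $\nnorm{\cdot}_{L^2(\Omega)}\le \nnorm{\cdot}_{L^2(\R^2)}$ together with the fact that the $L^2(\R^2)$-norm of the extension equals $\nnorm{u}_{L^2(\Omega)}$.

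There is no substantive obstacle here; the one bookkeeping point to watch is ensuring the constant depends only on $\sigma$ and $w_0$ (not on $w_1$), which is why the hypothesis $w_0\le w_1$ is needed --- it provides the uniform lower bound $\alpha_{w_1}\ge \alpha_{w_0}$ that enters both terms. A sharper estimate using $G_\sigma(x)\le \sigma^{-2}e^{-w_0^2/(2\sigma^2)}$ on the annular region $D_{w_2}\setminus D_{w_1}$ is available but unnecessary for the stated result.
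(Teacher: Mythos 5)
Your proof is correct and follows essentially the same route as the paper's: the same splitting of the difference into an annulus term and an $\alpha^{-1}$-difference term, the bound $\mu(D_{w_2}\setminus D_{w_1})\le 4(w_2-w_1)$, Young's inequality, the lower bound $\alpha_{w_1}\ge\alpha_{w_0}$, and zero-extension for the $L^2(\Omega)$ version. The only (immaterial) deviation is invoking~\eqref{eq:deblurL2est} for the second term where the paper reuses Young's inequality; since that constant depends on $w_2/\sigma$ and $w_2<1/2$, it is still controlled by $\sigma$ alone, so the stated dependence of $C$ on $\sigma$ and $w_0$ survives.
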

\begin{proof} Since $w_0\le w_1<w_2$, it follows that $D_{w_0}\subseteq D_{w_1}\subset D_{w_2}$.
Let $D_{w_1,w_2}=D_{w_2}\setminus D_{w_1}$. Because $w_1+w_2<1$,  
$$\mu(D_{w_1,w_2}) = 4(w_2^2-w_1^2) < 4(w_2-w_1)\ .$$ 
Also,
\beqs
\alpha_{w_2}-\alpha_{w_1} = \frac{1}{2 \pi}\int_{D_{w_1,w_2}} G_{\sigma}(x) dx \le \frac{2(w_2-w_1)}{\pi}\:\norm{G_{\sigma}}_{L^{\infty}(\R^2)} = 
\frac{2(w_2-w_1)}{\sigma^2\pi}\ .
\eeqs
Therefore
\beqs
\alpha_{w_1}^{-1}-\alpha_{w_2}^{-1} = \frac{\alpha_{w_2}-\alpha_{w_1}}{\alpha_{w_1}\:\alpha_{w_2}} \le \frac{2(w_2-w_1)}{\alpha_{w_0}^2 \sigma^2\pi} 
\eqdef {c_1}(w_2-w_1)\ ,
\eeqs
since $\alpha_{w_2}>\alpha_{w_1}\ge \alpha_{w_0}$. For $u\in L^2(\R^2)$
\beq
\nonumber
\lefteqn{\nnorm{(\op{K}^{\sigma, w_2} - \op{K}^{\sigma, w_1})u}_{L^2(\R^2)}}\\\nonumber
 & = & \nnorm{\left(\alpha_{w_2}^{-1}G_{\sigma}(\chi_{D_{w_2}} - \chi_{D_{w_1}}) + 
(\alpha_{w_2}^{-1}-\alpha_{w_1}^{-1}) G_{\sigma}\cdot\chi_{D_{w_1}} \right) * u}_{L^2(\R^2)} \\\nonumber
&\le & \alpha_{w_2}^{-1}\nnorm{(G_{\sigma}\cdot\chi_{D_{w_1,w_2}}) * u}_{L^2(\R^2)}+ 
\abs{\alpha_{w_2}^{-1}-\alpha_{w_1}^{-1}}\cdot \nnorm{(G_{\sigma}\cdot\chi_{D_{w_1}}) * u}_{L^2(\R^2)}\\
\label{eq:youngneeded}
&\le & \left(\alpha_{w_2}^{-1}\nnorm{G_{\sigma}\cdot\chi_{D_{w_1,w_2}}}_{L^1(\R^2)} + 
\abs{\alpha_{w_2}^{-1}-\alpha_{w_1}^{-1}}\cdot \nnorm{G_{\sigma}\cdot\chi_{D_{w_1}}}_{L^1(\R^2)} \right)\nnorm{u}_{L^2(\R^2)} \hspace{30pt}\\\nonumber
&\le& 
\nnorm{G_{\sigma}}_{L^{\infty}(\R^2)}\left(\alpha_{w_2}^{-1}\cdot \mu(D_{w_1,w_2}) +c_1(w_2-w_1)\mu(D_{w_1})\right)
\cdot  \nnorm{u}_{L^2(\R^2)} \\\nonumber
&\le & \sigma^{-2} \left(4\alpha_{w_0}^{-1} + c_1\right) (w_2-w_1) \nnorm{u}_{L^2(\R^2)} \eqdef C (w_2-w_1) \nnorm{u}_{L^2(\R^2)}\ ,
\eeq
where in~\eqref{eq:youngneeded} we used Young's inequality for convolutions. Naturally, $C$ only depends on $\sigma$ and $w_0$.
As before, the inequality~\eqref{eq:wsensitivity2} follows from~\eqref{eq:wsensitivity1} by extending $u\in L^2(\Omega)$ with zero
outside $\Omega$.
\end{proof}

\subsection{The discretization of $\op{K}^{\sigma, w}$ and convergence estimates}
\label{ssec:discrete_blurr}
Recall that the domain $\Omega$ is partitioned 
uniformly into $N_h=n^2$ 
squares $\Omega=\cup_{k=1}^{N_h} R_k$ with \mbox{$h=1/n$}, and let $z_k$ be the center of the square $R_k$. We denote by~$\op{U}_h$ the
space of piecewise constant functions on $\Omega$ with respect to the aforementioned partition, with functions
in  $\op{U}_h$ being determined by their values at the nodes $z_k$. For this example we take~$\op{V}_h=\op{U}_h$, so
$\op{K}_h\in \mathfrak{L}(\op{U}_h)$. Note that $\op{V}_{h}\not\subset H^1({\Omega})$. For convenience and consistency with the continuous
case we extend the grid to $\R^2$ and we extend any function in  $\op{U}_h$ with zero outside $\Omega$. In this section $\nnorm{\cdot}$ 
denotes the $L^2$-norm on $\Omega$.

The first step towards discretization is to slightly enlarge the domain of integration in~\eqref{eq:blurdef}, when  $x=z_k$, to be a union of elements 
in the partition.
Hence, for a given node $z_k$,  denote by $\op{N}_k$ the set of indices $l$ for which  Int$(R_k)$   intersects
the ball $\op{B}_w(z_k) = \{y\in \Omega\: :\: \abs{y-z_k}_{\infty}< w\}$. It is easy to see that 
$$
\op{N}_k=\left\{l\: :\: \abs{z_k-z_l}_{\infty}\le w (1+h/2)\right \}\ .
$$
So for $x=z_k$ the domain of integration in~\eqref{eq:blurdef} becomes
$$
\bigcup_{l\in \op{N}_k} R_l = \op{B}_{w_h}(z_k)\ ,\ \ \ \mathrm{with}\ \ 
w_h\eqdef \left(\left\lceil\frac{w}{h}-\frac{1}{2}\right\rceil +\frac{1}{2}\right) h .
$$
Essentially, this is the smallest ball (in the $\abs{\: \cdot\:}_{\infty}$-norm) 
centered at $z_k$ that includes $\op{B}_w(z_k)$ and is also a union of mesh-elements.
Note that
\beq
\label{eq:wdiff}
w\le w_h < w+h\ .
\eeq
The discretization 
$\op{K}_h\in \mathfrak{L}(\op{U}_h)$ of $\op{K}=\op{K}^{\sigma, w}$ is given by
\beq
\label{eq:discrete_deblurr}
({\op{K}}_h u) (z_k) \eqdef (1+\eta)\:\sum_{l\in \op{N}_k} {\gamma}_{k l}\: u(z_l) \eqdef (1+\eta)(\widetilde{K}_h u)(z_k)
\eeq
with ${\gamma}_{k l} = h^2 (2\alpha_{w_h} \pi)^{-1} G_{\sigma}(z_k-z_l)$, where $\eta$ is chosen so that
\beq
\label{eq:discrete_deblurr_cond}
(1+\eta)\sum_{l\in \op{N}_k}{\gamma}_{k l} = 1\ ,
\eeq
for all $1\le k \le N_h$ for which $\op{B}_{w_h}(z_k)\subseteq \Omega$. 
Note that due to the uniformity of the grid, ${\gamma}_{k l}$ only depends on the vector $(z_k-z_l)$.
The next result shows that the operators $\op{K}^{\sigma,w}$ and ${\op{K}}_h$ satifsy Condition~\ref{cond:condsmooth}{\bf [b]} and {\bf [c]}.
\begin{theorem}
\label{lma:approx_deblur}
There exists a constant $C>0$ which depends on $\sigma, w$  so that 
\beq
\label{eq:approx_deblur}
\nnorm{(\op{K}^{\sigma,w} -{\op{K}}_h)u} &\le & C h \nnorm{u},\ \ \forall u\in \op{U}_h\\
\label{eq:linftybd_deblur}
\nnorm{{\op{K}}_hu}_{L^{\infty}(\Omega)} & \le & C \nnorm{u},\ \ \forall u\in \op{U}_h\ ,
\eeq
assuming $h$ is sufficiently small.
\end{theorem}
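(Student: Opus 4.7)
My approach is to split $\op{K}^{\sigma,w}u - \op{K}_h u$ by inserting two intermediate operators that isolate three distinct error sources: the change of cutoff radius from $w$ to $w_h$, the $L^2$-projection of a smooth function onto piecewise constants, and the replacement of exact cell integrals by a midpoint-type quadrature. Concretely, letting $P_h : L^2(\Omega) \to \op{U}_h$ denote the $L^2$-projection, I write
\beqs
\op{K}^{\sigma,w}u - \op{K}_h u = (\op{K}^{\sigma,w} - \op{K}^{\sigma,w_h})u + (I - P_h)\op{K}^{\sigma,w_h}u + (P_h\op{K}^{\sigma,w_h}u - \op{K}_h u),
\eeqs
and bound the three summands in $\nnorm{\cdot}=\nnorm{\cdot}_{L^2(\Omega)}$ separately. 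The first summand is immediate from Lemma~\ref{lma:wsensitivity}: inequality~\eqref{eq:wdiff} gives $0 \le w_h - w \le h$, so $\nnorm{(\op{K}^{\sigma,w}-\op{K}^{\sigma,w_h})u} \le Ch\nnorm{u}$. For the second summand, Lemma~\ref{lma:deblurH1est} yields $\nnorm{\op{K}^{\sigma,w_h}u}_{H^1(\Omega)} \le C\nnorm{u}$, and the standard $L^2$-projection approximation estimate $\nnorm{(I-P_h)v} \le Ch\nnorm{v}_{H^1(\Omega)}$ for piecewise constants on a shape-regular mesh then delivers $Ch\nnorm{u}$.

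The third summand carries the bulk of the quadrature analysis. Setting $K_h := G_\sigma\chi_{D_{w_h}}/(2\pi\alpha_{w_h})$ and using that $u$ is constant on each $R_l$ with value $u(z_l)$,
\beqs
(P_h \op{K}^{\sigma,w_h}u)(z_k) = h^{-2}\sum_l u(z_l)\,I_{k,l},\quad I_{k,l}:=\int_{R_k}\!\!\int_{R_l}K_h(x-y)\,dy\,dx,
\eeqs
which must be compared with $(\op{K}_h u)(z_k) = (1+\eta)\sum_l u(z_l)\, h^2 K_h(z_k-z_l)$. The construction of $w_h$ in~\eqref{eq:wdiff} places the discontinuity set $\{|x|_\infty = w_h\}$ of $\chi_{D_{w_h}}$ on a half-integer grid, so it cuts the shifted square $(z_k-z_l)+(-h,h)^2$ for only a single outer ring of indices $l\in\op{N}_k$, of cardinality $O(w/h)$ versus $|\op{N}_k|\sim(w/h)^2$ total. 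On interior pairs a Taylor expansion of $K_h$ around $z_k-z_l$ gives the compound midpoint bound $|I_{k,l}-h^4K_h(z_k-z_l)|\le Ch^6\nnorm{D^2 K_h}_{L^\infty}$, while on the boundary ring I fall back on the crude bound $|I_{k,l}-h^4K_h(z_k-z_l)|\le Ch^4\nnorm{K_h}_{L^\infty}$. Dividing by $h^2$, summing over $l$ via Cauchy-Schwarz, then squaring and summing over $k$ using the symmetric double-counting $|\{k:l\in\op{N}_k\}|\le C(w/h)^2$, the interior contribution to $\nnorm{P_h\op{K}^{\sigma,w_h}u-\widetilde{K}_h u}$ comes out as $O(h^2)\nnorm{u}$ and the boundary-ring contribution as $O(h)\nnorm{u}$. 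The constant $\eta$ is itself $O(h^2)$, being defined precisely to cancel the compound midpoint error on the normalization integral, so the extra $(1+\eta)$ factor contributes $O(h^2)\nnorm{\op{K}_h u}$, subsumed once the $L^\infty$ bound below is in hand.

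For the $L^\infty$ bound~\eqref{eq:linftybd_deblur}, a direct Cauchy-Schwarz on the discrete sum yields
\beqs
|(\op{K}_h u)(z_k)|^2 \le (1+\eta)^2\Big(\sum_l \gamma_{kl}\Big)\Big(\sum_l \gamma_{kl}\,u(z_l)^2\Big) \le \frac{(1+\eta)\,\nnorm{G_\sigma}_{L^\infty(\R^2)}}{2\pi\alpha_{w_h}}\nnorm{u}^2,
\eeqs
using $\sum_l\gamma_{kl}\le (1+\eta)^{-1}$ (equality for interior $z_k$, inequality for boundary $z_k$, where $u$ is anyway extended by zero), $\gamma_{kl}\le h^2\nnorm{G_\sigma}_{L^\infty}/(2\pi\alpha_{w_h})$, and $h^2\sum_l u(z_l)^2=\nnorm{u}^2$. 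The main obstacle is the discontinuity of $\chi_{D_{w_h}}$: without the lattice alignment built into~\eqref{eq:wdiff}, the jump could affect $O((w/h)^2)$ shifted cells rather than a one-dimensional ring, degrading the rate below $O(h)$. The careful choice $w_h=(m+\frac{1}{2})h$ is exactly what confines the irregularity to a ring of $O(w/h)$ cells, so that the crude per-cell bound there, combined with the $O(h^6)$ smooth-region bound everywhere else, closes the estimate.
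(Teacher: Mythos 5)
Your proof is correct, and its first two error sources coincide with the paper's own decomposition: the cutoff change $w\to w_h$ is controlled exactly as you do via Lemma~\ref{lma:wsensitivity} and \eqref{eq:wdiff}, and the projection error via Lemma~\ref{lma:deblurH1est} together with Bramble--Hilbert (the paper's $\op{I}_h$ is precisely your cell-averaging $P_h$). The genuine divergence is in the quadrature term and in the $L^\infty$ bound. For the quadrature term the paper compares the \emph{point value} $(\op{K}^{\sigma,w_h}u)(z_k)$ with $(\widetilde{\op{K}}_hu)(z_k)$: because $w_h$ is a half-integer multiple of $h$, the integration domain $\op{B}_{w_h}(z_k)$ is exactly the union $\bigcup_{l\in\op{N}_k}R_l$, so the kernel is smooth on every quadrature cell and a single-integral midpoint argument gives the clean $O(h^2)$ pointwise bound \eqref{eq:pointwiseest} with no boundary-ring bookkeeping --- the discontinuity of $\chi_{D_{w_h}}$ never enters. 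You instead compare the \emph{cell average} $P_h\op{K}^{\sigma,w_h}u$ directly with the quadrature via a compound midpoint rule for the double integral over $R_k\times R_l$, which is what forces the analysis of the $O(w/h)$ ring of cut cells and yields only $O(h)$ from that ring (still sufficient). Your route is more faithful to what $P_h$ actually is --- the paper's step \eqref{eq:l2inf1} tacitly identifies the cell average with the point value estimated in \eqref{eq:pointwiseest} --- at the cost of a discontinuity analysis the paper's alignment trick avoids entirely; note only that your ring also contains a layer of indices just outside $\op{N}_k$, where $I_{k,l}\ne 0$ but $K_h(z_k-z_l)=0$, handled by the same crude bound. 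For \eqref{eq:linftybd_deblur} the paper passes through the continuous $L^1\to L^\infty$ bound for $\op{K}^{\sigma,w_h}$ plus \eqref{eq:pointwiseest}, whereas your discrete Cauchy--Schwarz using the row-sum normalization \eqref{eq:discrete_deblurr_cond}, the inequality $\sum_l\gamma_{kl}\le(1+\eta)^{-1}$ near $\partial\Omega$, and the entrywise bound $\gamma_{kl}\le h^2\sigma^{-2}/(2\pi\alpha_{w_h})$ is self-contained and arguably cleaner; it requires $1+\eta>0$, which your $|\eta|=O(h^2)$ estimate (the same one the paper proves in \eqref{eq:deltah2est}) guarantees for $h$ small.
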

\begin{proof}
Throughout this analysis $C$ denotes a generic positive constant depending on $\sigma, w$ but not on $h$.
By Lemma~\ref{lma:wsensitivity} 
\beq
\label{eq:senswh}
\nnorm{(\op{K}^{\sigma,w} - \op{K}^{\sigma,w_h})u} \le C (w_h-w)\nnorm{u} \stackrel{\eqref{eq:wdiff}}{\le} C h \nnorm{u},\ \ \forall u\in \op{U}_h\ .
\eeq
Let $\op{I}_h: L^2(\Omega)\to \op{U}_h$ be the interpolation operator
$$(\op{I}_h v) (z_k) = \frac{1}{\mu(R_k)}\int_{R_k} v(x) dx\ .$$
By the Bramble-Hilbert Lemma and Lemma~\ref{lma:deblurH1est}
\beq
\label{eq:BHlemma}
\nnorm{(\op{K}^{\sigma,w_h} - \op{I}_h\op{K}^{\sigma,w_h})u} \le C h \nnorm{\op{K}^{\sigma,w_h} u}_1 \le C h \nnorm{u},\ \ \forall u\in \op{U}_h\ ,
\eeq
with $C$ depending only on $w_h/\sigma$ and the domain $\Omega$; hence $C$ can be bounded uniformly with respect to $h$.
We now fix an index $1\le k \le N_h$, and let $l\in \op{N}_k$. The choice ${\gamma}_{k l}$ is so that $(\widetilde{K}_h u)(z_k)$
is obtained by replacing in~\eqref{eq:blurdef} (with $w_h$ instead of $w$ and $x=z_k$) the integral on each $R_l$ ($l\in \op{N}_k$)
by the midpoint cubature. Therefore, since $u$ is constant on each $R_l$,
\beq
\label{eq:estkminusktilde}
(\op{K}^{\sigma,w_h}-\widetilde{K}_h)u (z_k)  = \frac{\alpha_{w_h}^{-1}}{2\pi}
\sum_{l\in \op{N}_k}\int_{R_l} \left(G_{\sigma}(z_k-y)-G_{\sigma}(z_k-z_l)\right)u(z_l) dy.\hspace{30pt}
\eeq
Let $M_{\sigma}$ be the upper bound of the second order (bilinear) Fr{\' e}chet differential of $G_{\sigma}$, regarded as a function from 
$(\R^2,\abs{\:\cdot\:}_{\infty})$ to $\R$ (it is easy to see that all differentials of $G_{\sigma}$ are bounded uniformly on $\R^2$).
Then the Taylor expansion of the function $y\mapsto G_{\sigma}(z_k-y)$ around $z_l$ gives
$$
\abs{G_{\sigma}(z_k-y)-G_{\sigma}(z_k-z_l) + dG_{\sigma}(z_k) (y-z_l)}_{\infty} \le \frac{M_{\sigma}}{2} \abs{y-z_l}^2\ .
$$
Due to the symmetry of $R_l$  with respect to $z_l$ we have 
$$
\int_{R_l}dG_{\sigma}(z_k) (y-z_l) dy = 0\ .
$$
Hence
\beq
\label{eq:esttayint}
\Abs{\int_{R_l} \left(G_{\sigma}(z_k-y)-G_{\sigma}(z_k-z_l)\right) dy}\le \mu(R_l)\frac{M_{\sigma}h^2}{2} \ .
\eeq
By~\eqref{eq:estkminusktilde} and~\eqref{eq:esttayint} 
\beq
\label{eq:pointwiseest}
\Abs{(\op{K}^{\sigma,w_h}-\widetilde{K}_h)u (z_k)} \le h^2\frac{\alpha_{w_h}^{-1}M_{\sigma} }{4\pi}
\sum_{l\in \op{N}_k}\mu(R_l) \abs{u(z_l)} \le h^2 \frac{\alpha_{w}^{-1}M_{\sigma} }{4\pi} \nnorm{u}_{L^1(\Omega)}. \hspace{30pt}
\eeq
Using the continuous inclusions $L^{\infty} \subset L^{2} \subset L^{1}$
\beq
\label{eq:l2inf1}
\nnorm{(\op{I}_h\op{K}^{\sigma,w_h}-\widetilde{K}_h)u}\le
  C \nnorm{(\op{I}_h\op{K}^{\sigma,w_h}-\widetilde{K}_h)u}_{L^{\infty}(\Omega)} \stackrel{\eqref{eq:pointwiseest}}{\le} 
  C h^2 \nnorm{u}_{L^1(\Omega)}\le C h^2 \nnorm{u}.\hspace{30pt}
\eeq
The estimates~\eqref{eq:senswh},~\eqref{eq:BHlemma}, and~\eqref{eq:l2inf1}
imply that
\beq
\label{eq:approx_Ktilde}
\nnorm{(\op{K}^{\sigma,w} -{\widetilde{\op{K}}}_h)u} \le C h \nnorm{u},\ \ \forall u\in \op{U}_h\ .
\eeq
Using~\eqref{eq:deblurL2estdelta} and~\eqref{eq:approx_Ktilde}, we also obtain the uniform estimate
\beq
\label{eq:Ktildeunif}
\nnorm{\widetilde{\op{K}}_h u} \le C \nnorm{u},\ \ \forall u\in \op{U}_h\ .
\eeq

For the final step, recall that $\op{K}_h =(1+\eta){\widetilde{\op{K}}}_h$, with $\eta$ chosen to satisfy~\eqref{eq:discrete_deblurr_cond}.
To estimate $\eta$, let $z_k$ be so that $\op{B}_{w_h}(z_k)\subseteq \Omega$ and  $u\equiv 1\in \op{U}_h$. 
By definition of the coefficients $\gamma_{k l}$ (which are all positive)  
\beqs
\sum_{l\in \op{N}_k}{\gamma}_{k l} = (\widetilde{K}_h u )(z_k) 
\stackrel{\eqref{eq:pointwiseest}}{\ge} (\op{K}^{\sigma,w_h}u)(z_k)-C h^2 \nnorm{u}_{L^1(\Omega)} =1-C h^2 \ge\frac{1}{2},
\eeqs
assuming $h$ is sufficiently small. Hence
\beq
\label{eq:deltah2est}
\frac{1}{2} \abs{\eta} \le \abs{\eta}\sum_{l\in \op{N}_k}{\gamma}_{k l} \stackrel{\eqref{eq:discrete_deblurr_cond}}{=} 
\abs{1- \sum_{l\in \op{N}_k}{\gamma}_{k l}} = \Abs{\left( \op{K}^{\sigma,w_h}-{\widetilde{\op{K}}}_h \right)u (z_k)} 
\stackrel{\eqref{eq:pointwiseest}}{\le} C h^2.\hspace{30pt}
\eeq
Therefore 
\beq
\label{eq:approx_KtildeKh}
\nnorm{(\op{K}_h -{\widetilde{\op{K}}}_h)u} = \abs{\eta} \nnorm{{\widetilde{\op{K}}}_hu}\stackrel{\eqref{eq:Ktildeunif},\eqref{eq:deltah2est}}{\le} 
C h^2 \nnorm{u},\ \ \forall u\in \op{U}_h\ .
\eeq
The conclusion follows from~\eqref{eq:approx_Ktilde} and~\eqref{eq:approx_KtildeKh}.

Given a node $z_k$ we have
\beqs
\abs{(\op{K}^{\sigma, w_h} u) (z_k)} \le \frac{\alpha_{w_h}^{-1}}{2\pi}\int_{\abs{z_k-y}_{\infty}<w_h} G_{\sigma}(z_k-y) \abs{u(y)} dy
\le \frac{\alpha_{w_h}^{-1}}{2\sigma^2 \pi} \nnorm{u}_{L^1(\Omega)} \le C \nnorm{u}_{L^1(\Omega)}\ .
\eeqs
By~\eqref{eq:pointwiseest}
\beqs
\nnorm{\widetilde{\op{K}}_h u}_{L^{\infty}(\Omega)}\le  \nnorm{\op{K}^{\sigma, w_h} u}_{L^{\infty}(\Omega)}+
\nnorm{(\op{K}^{\sigma, w_h} - \widetilde{\op{K}}_h )u}_{L^{\infty}(\Omega)}\le C \nnorm{u}_{L^1(\Omega)} \le C \nnorm{u}\ ,
\eeqs
and~\eqref{eq:linftybd_deblur} now follows from~\eqref{eq:deltah2est} and $\op{K}_h =(1+\eta){\widetilde{\op{K}}}_h$.
\end{proof}

\label{sec:deblur_conditions}

\bibliography{ssnmrefs}
\bibliographystyle{siam}

\end{document}